\numberwithin{equation}{section}
\newtheorem{thm}{Theorem}[section]
\newtheorem{lem}{Lemma}[section]
\newtheorem{prop}{Proposition}[section]
\theoremstyle{definition}
\newtheorem{dfn}{Definition}[section]
\theoremstyle{remark}
\newtheorem{rem}{Remark}
\newcommand{\End}{\text{End}}
\newcommand{\xc}{e^{-\eta}\phi}
\begin{document}

\title{Uniqueness theorems for non-compact mean curvature flow with possibly unbounded curvatures}

 \author{Man-Chun Lee}
\address[Man-Chun Lee]{Department of
 Mathematics, The Chinese University of Hong Kong, Shatin, Hong Kong, China.}
\email{mclee@math.cuhk.edu.hk}

 \author{Man Shun John Ma}
\address[Man Shun John Ma]{Department of Mathematics, Rutgers, The State University of New Jersey, New Brunswick}
\email{john.ma311@rutgers.edu}

\begin{abstract}
In this paper, we discuss uniqueness and backward uniqueness for mean curvature flow of non-compact manifolds. We use an energy argument to prove two uniqueness theorems for mean curvature flow with possibly unbounded curvatures. These generalize the results in \cite{ChenYin}. Using similar method, we also obtain a uniqueness result on Ricci flows. A backward uniqueness theorem is also proved for mean curvature flow with bounded curvatures.
\end{abstract}

\date{\today}

\maketitle

\markboth{Man-Chun Lee and John Man Shun Ma}{Uniqueness on Mean Curvature Flow with unbounded curvature}

\section{Introduction}
Given an immersion $F_0: \Sigma \to M$, the mean curvature flow (MCF) starting at $F_0$ is a family of immersions which moves along the mean curvature vector. The MCF is the negative gradient flow of the area functional, and has been studied extensively for the past 40 years.

When $\Sigma$ is compact, the MCF starting at an immersion $F_0$ always exists and is unique up to a maximal time interval $[0,T)$. On the other hand, if $\Sigma$ is non-compact, the general existence and uniqueness problem is still not solved.

The first existence result in the non-compact setting is by Ecker and Huisken \cite{EckerHuisken}, where they prove the existence of MCF starting at a hypersurface $M_0$ in $\mathbb R^{n+1}$ with uniform Lipschitz bound. If $M_0$ is an entire graph, they also show the long time existence when $M_0$ is merely locally Lipschitz.

We remark that the minimal Lipschitz cone constructed by Lawson and Osserman \cite{LawsonOsserman} may serve as an obstruction to the apriori estimates in \cite{EckerHuisken} in higher codimension. Existence of non-compact MCF has only been obtained for entire graph with assumptions on smallness of Lipschitz norms \cite{ChauChenHe}, \cite{ChauChenYuan}, \cite{KochLamm}, \cite{Wang}, \cite{Lubbe}.

Next we discuss the uniqueness of MCF. Koch and Lamm show uniqueness of MCF \cite{KochLamm} for entire graph with small Lipschitz bound in any codimension. Chen and Peng prove in \cite{ChenPeng} that any viscosity solution of the graphical
Lagrangian MCF with a continuous initial data is unique. For general immersions, Chen and Yin show in \cite{ChenYin} the uniqueness of MCF among flows with uniformly bounded second fundamental forms. Together with a pseudolocality theorem, they prove uniqueness of MCF starting from an proper embedding with bounded second fundamental form and satisfying an uniform graphic condition.

The first goal of this paper is to prove the following uniqueness theorem which generalizes Chen and Yin's uniqueness result to the case of possibly unbounded curvatures.

\begin{thm} \label{Uniqueness theorem}
Let $(M,h)$ be a non-compact complete Riemannian manifold with positive injectivity radius lower bound $i_0$ such that
\begin{equation} \label{assumptions on ambient curvatures}
|\bar R|\le B_0, |\bar\nabla \bar R| \le B_1 \text{ and } |\bar\nabla^2 \bar R|^2 \le L r^{2-\epsilon} \text{ for } r>>1,
\end{equation} where $\bar\nabla, \bar R$ are respectively the Levi-Civita connection and the Riemann curvature tensor of $(M, h)$, $r(y) = d_M (y,y_0)$ for some $y_0\in M$ and $B_0, B_1, L,\epsilon>0$. Let $F_0:\Sigma \to M$ be a smooth proper immersion so that
\begin{equation} \label{assumption on volume growth of F_0}
\text{Vol}_{\Sigma} (F_0^{-1} B_M (y_0, \bar r)) \le D e^{D \bar r^2}
\end{equation}
for some constant $D>0$ and for all $\bar r>>1$. Let $F, \widetilde F$ be smooth solutions to the MCF starting at $F_0$, which satisfy the following conditions:
\begin{enumerate}
\item $F, \widetilde F$ are uniformly continuous with respect to $t$, 
\item The induced metric $g(t), \tilde g(t)$ on $\Sigma$ are uniformly equivalent to $g_0$, and
\item the second fundamental forms $A$, $\widetilde A$ satisfy
\begin{equation} \label{assumption on A}
|A|^2(t,x) + |\widetilde A|^2 (t,x) \le \frac{L}{t} r^{2-\epsilon} (F_0(x))
\end{equation}
for some $L>0$.
 \end{enumerate}
Then $F = \widetilde F$.
\end{thm}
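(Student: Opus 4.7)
The plan is a weighted $L^2$ energy estimate. For $t$ close to $0$, the positive injectivity radius $i_0$ and the uniform continuity in $t$ of the two flows (condition (1)) ensure that $\widetilde F(x,t)$ lies in a normal neighborhood of $F(x,t)$, so the displacement vector
\[
V(x,t) := \exp^{-1}_{F(x,t)} \widetilde F(x,t) \in T_{F(x,t)}M
\]
is well-defined as a section along $F$. Together with the tensorial differences $G := \tilde g - g$ and $\Theta := \widetilde A - A$, the triple $(V, G, \Theta)$ captures the full geometric discrepancy of the two flows and vanishes identically at $t=0$.

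Using $\partial_t F = H$ and $\partial_t \widetilde F = \widetilde H$, together with Jacobi-field estimates for the ambient exponential map (controlled by $|\bar R| \le B_0$ and $|\bar\nabla \bar R| \le B_1$), one derives parabolic systems of the schematic form
\begin{align*}
(\partial_t - \Delta_g) V &= V \ast \bar R + V \ast A \ast A + G \ast A + \Theta,\\
(\partial_t - \Delta_g) G &= \Theta \ast (A + \widetilde A) + G \ast (\bar R + A \ast A) + V \ast \bar\nabla \bar R,\\
(\partial_t - \Delta_g) \Theta &= \Theta \ast (\bar R + A \ast A) + G \ast \bar\nabla \bar R + V \ast \bar\nabla^2 \bar R + \text{(l.o.t.)},
\end{align*}
where $\ast$ denotes an arbitrary bilinear contraction. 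The principal part in each equation is the heat operator of $g(t)$, and the nonlinear coefficients are controlled by \eqref{assumptions on ambient curvatures} and \eqref{assumption on A}.

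I would then multiply each equation by its left-hand side variable and integrate against a Gaussian-type weight $e^{-\phi(x,t)}\, d\mu_{g(t)}$, with $\phi(x,t) \sim a\, r(F_0(x))^2/(T'-t)$ for a small $a>0$ and $T' > T$, forming the weighted energy
\[
E(t) = \int_\Sigma \bigl( |V|^2 + |G|^2 + |\Theta|^2 \bigr)\, e^{-\phi(x,t)}\, d\mu_{g(t)}.
\]
This is finite by the volume growth \eqref{assumption on volume growth of F_0} and the metric equivalence in condition (2). Integration by parts on the $\Delta_g$ terms produces $|\nabla\phi|^2$ contributions which are absorbed by $\partial_t\phi$ when $a$ is small, using $|\nabla r|\le 1$; Young's inequality on the cross terms combined with the curvature hypotheses should then yield
\[
\frac{d}{dt} E(t) \le C(t)\, E(t), \qquad \int_0^{t_0} C(s)\, ds < \infty,
\]
for some $t_0>0$. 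Since $E(0)=0$, Gronwall gives $E \equiv 0$ on $[0,t_0]$, and a standard open-closed argument propagates this to the entire interval $[0,T)$, forcing $V, G, \Theta$ to vanish and hence $F \equiv \widetilde F$.

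The principal obstacle is reconciling the $1/t$ singularity in \eqref{assumption on A} with the polynomial-in-$r$ growth that $\partial_t\phi$ must dominate: a pure Gaussian $\phi = a r^2/(T'-t)$ satisfies $\partial_t\phi \sim r^2/(T'-t)^2$, which is borderline for absorbing terms such as $|A|^2 \lesssim r^{2-\epsilon}/t$ near $t = 0$. I expect to resolve this by adding to $\phi$ an auxiliary term of the form $\beta(t)\, r^{2-\epsilon}$ with $\beta'(t) \gtrsim 1/t$, whose time derivative precisely cancels the Ecker--Huisken-type $r^{2-\epsilon}/t$ coefficient; alternatively, one may first prove $F \equiv \widetilde F$ on each sub-interval $[t_0, T)$ where the second fundamental forms are locally bounded (reducing to a Chen--Yin-type argument) and then let $t_0 \to 0$, using the uniform continuity in condition (1) together with a quantitative decay of $E(t_0)$ to close the Gronwall loop. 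The metric equivalence in condition (2) is used throughout to pass freely between $g(t)$, $\tilde g(t)$, and $g_0$ in all estimates.
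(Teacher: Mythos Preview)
Your overall architecture---weighted $L^2$ energy, Gaussian weight, integrate by parts---matches the paper, but the step you flag as the ``principal obstacle'' is a genuine gap and neither of your proposed fixes closes it. The differential inequality one actually obtains is $E_r'(t)\le \frac{C_1 r^{2-\epsilon}}{t}E_r(t)+\text{error}$, and $t^{-1}$ is not integrable at $0$, so Gronwall from $E(0)=0$ fails. Your first fix, adding $\beta(t)r^{2-\epsilon}$ to $\phi$ with $\beta'(t)\gtrsim 1/t$, forces $\beta(t)\to -\infty$ as $t\to 0^+$, so $e^{-\phi}\to +\infty$ and the energy is infinite near $t=0$. Your second fix, working on $[t_0,T)$ and sending $t_0\to 0$, integrates to $E_r(t)/t^\alpha\le E_r(t_0)/t_0^\alpha+\text{error}$ with $\alpha=C_1 r^{2-\epsilon}$; to let $t_0\to 0$ you need $E_r(t_0)=o(t_0^\alpha)$ for \emph{every} $\alpha$ (since $r$ will later go to infinity), i.e.\ the energy density must vanish to infinite order at $t=0$. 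This is not automatic and is proved separately in the paper: one shows, via Jacobi-field expansions and the MCF equation, that $D_t^k v|_{t=0}=0$ and $D_t^k P|_{t=0}=0$ for all $k$, whence dominated convergence on the compact support of the cutoff gives $\lim_{t_0\to 0}E_r(t_0)/t_0^\alpha=0$.

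Two structural corrections are also needed. First, $\widetilde A$ and $A$ live in different pullback bundles $\widetilde F^{-1}TM$ and $F^{-1}TM$, so $\Theta=\widetilde A-A$ is undefined; the paper introduces the parallel transport $P$ along the shortest geodesic from $F$ to $\widetilde F$ and works with $P\widetilde A-A$, $P\widetilde F_*-F_*$, etc., together with estimates on $\nabla P$, $\Delta P$, $D_tP$. Second, $V$, $G$ and $\Gamma-\widetilde\Gamma$ do \emph{not} satisfy parabolic equations with $\Delta_g$ as principal part; they obey only first-order-in-time ODE inequalities whose right-hand sides contain the third-order quantity $P\widetilde\nabla\widetilde A-\nabla A$. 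The paper handles this by integrating by parts only the genuinely parabolic term $h(P\widetilde\Delta\widetilde A-\Delta A,\,P\widetilde A-A)$, which yields a strictly negative $-\int|P\widetilde\nabla\widetilde A-\nabla A|^2$ absorbing all third-order contributions. Closing the estimate also requires a $t$-weight $\rho^{2-\epsilon}/t$ on $|P\widetilde F_*-F_*|^2$ inside $\mathcal Q$, and a compactly supported cutoff $\varphi^p(\rho/r)$ with $p=p(r)\sim r^{2-\epsilon}$ chosen so that the boundary error is beaten by the Gaussian weight and the volume-growth hypothesis.
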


The precise definition in condition (1) and (2) is given in section 2. 

If we compare the above theorem to theorem 1.1 and 1.3 in \cite{ChenYin}, we assume weaker curvature bounds, while in the expense of assuming the volume growth (\ref{assumption on volume growth of F_0}) of the initial immersions. We do not make any graphic/curvature assumptions on the initial immersion. We remark that condition (1) and (2) are both satisfied if $|A|, |\widetilde A|$ are uniformly bounded.

Note that in theorem \ref{Uniqueness theorem}, we assume that $F_0$ is smooth and the MCF $F$ and $\widetilde F$ are both smooth up to time $t=0$. In the next theorem, it is shown that  under a better bound on the second fundamental forms and $\bar R$, one can relax these assumptions and drop the volume growth condition.

\begin{thm} \label{Uniqueness theorem for t^alpha bound}
Let $(M,h)$ be a non-compact complete Riemannian manifold with positive injectivity radius lower bound and uniformly bound on $\bar\nabla^i \bar R$ for $i=0,1,2$. Let $F_0:\Sigma \to M$ be a $C^3$ proper immersion and let $F, \widetilde F : (0,T] \times \Sigma \to M$ be solutions to the MCF so that $F, \widetilde F$ converges to $F_0$ locally in $C^3$ as $t\to 0$. If the curvatures satisfy $|A|+|\widetilde A| \le L/t^\alpha$ for some $\alpha <1/2$ and $L>0$. Then $F = \widetilde F$.
\end{thm}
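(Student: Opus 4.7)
The plan is to adapt the energy method behind Theorem~\ref{Uniqueness theorem}, exploiting the improved temporal decay $|A|+|\widetilde A|\le L t^{-\alpha}$ with $\alpha<1/2$ to accommodate the weaker $C^3$ regularity at $t=0$ and the absence of any volume-growth hypothesis. First, on each compact $K\subset\Sigma$ and for $t$ small, I would define a section $\phi(t,x)\in T_{F(t,x)}M$ by $\widetilde F(t,x)=\exp^h_{F(t,x)}\phi(t,x)$, which is well-defined because $M$ has positive injectivity radius and the two immersions are $C^0$-close to $F_0$ for small $t$; the $C^3_{\mathrm{loc}}$ convergence guarantees $\phi(t,\cdot)\to 0$ in $C^3_{\mathrm{loc}}$ as $t\to 0^+$. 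Standard computations using the uniform bounds on $\Db^i\Rh$ ($i=0,1,2$) then produce a parabolic inequality of the form
\begin{equation*}
\Bigl(\ppt-\Delta_{g(t)}\Bigr)|\phi|^2 \le C\bigl(1+|A|^2+|\widetilde A|^2\bigr)|\phi|^2 - 2|\Db\phi|^2 + \text{(lower order)},
\end{equation*}
in which the lower-order terms are absorbed by Cauchy--Schwarz.

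The central observation is that $\alpha<1/2$ makes $|A|^2+|\widetilde A|^2\le 2L^2 t^{-2\alpha}$ \emph{integrable} on $(0,T]$, while $|\partial_t g|\le 2|A|^2$ integrates to a finite quantity, so $g(t)$ stays uniformly equivalent to $g_0$ on compact subsets. Moreover, the bound $|A|(t,\cdot)\le L t^{-\alpha}$ is \emph{uniform in $x$} at each fixed $t>0$; combined with the bounded ambient curvature, it forces at most exponential intrinsic volume growth of $(\Sigma,g(t))$, with rate controlled uniformly on $t\in[\epsilon,T]$. I would then work with a Gaussian-weighted energy
\begin{equation*}
E_\beta(t)=\int_\Sigma |\phi|^2\,e^{-\beta r^2}\,d\mu_{g(t)},
\end{equation*}
where $r$ is the intrinsic distance in $g_0$ and $\beta>0$ is chosen small enough (relative to the exponential growth rate) for $E_\beta$ to be finite. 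Differentiating, integrating by parts so that the gradient term $-2|\Db\phi|^2$ absorbs the weight errors, and applying the parabolic inequality yield
\begin{equation*}
\frac{d}{dt}E_\beta(t)\le C\bigl(1+L^2 t^{-2\alpha}\bigr)E_\beta(t).
\end{equation*}
Gronwall from $\epsilon$ to $t\in(\epsilon,T]$ gives $E_\beta(t)\le E_\beta(\epsilon)\exp\bigl(CT+CL^2 T^{1-2\alpha}/(1-2\alpha)\bigr)$, and the $C^3_{\mathrm{loc}}$ convergence $\phi\to 0$, combined with the Gaussian weight against exponential volume growth, forces $E_\beta(\epsilon)\to 0$ as $\epsilon\to 0^+$. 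Hence $E_\beta\equiv 0$, and letting $\beta\to 0$ gives $\phi\equiv 0$, i.e.\ $F=\widetilde F$.

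The main obstacle --- and the reason Theorem~\ref{Uniqueness theorem} required the volume-growth assumption (\ref{assumption on volume growth of F_0}) --- is controlling the weighted energy on a non-compact $\Sigma$ without any a priori volume bound on the initial immersion. Here the resolution is that the \emph{uniform spatial} bound on $|A|(t,\cdot)$ at each $t>0$ produces exponential intrinsic volume growth independent of $F_0$, which is exactly what a Gaussian weight absorbs. A second subtlety is that $F$ and $\widetilde F$ are only $C^3$ (not smooth) at $t=0$, so one cannot simply evaluate the energy identity at $t=0$; the argument therefore replaces the usual vanishing of the initial data by the limit $E_\beta(\epsilon)\to 0$, which requires precisely the hypothesized $C^3$ convergence to $F_0$ to control $|\phi|^2$ and the time-dependent volume form in this limit.
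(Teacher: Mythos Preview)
Your overall strategy---exploiting the integrability of $t^{-2\alpha}$ for $\alpha<1/2$, using the uniform-in-$x$ bound on $|A|$ at a fixed positive time together with the Gauss equation to get exponential intrinsic volume growth (and hence finiteness of a Gaussian-weighted energy), and passing to the limit $\epsilon\to 0$ via the $C^3_{\mathrm{loc}}$ convergence---is sound and is exactly what the paper does. The gap is in the first step: the displayed ``parabolic inequality'' for $|\phi|^2$ alone does not follow from standard computations, and in fact cannot hold in the form you state. The mean curvature flow is only degenerate parabolic (it is diffeomorphism invariant), so the section $\phi$ does \emph{not} satisfy a strictly parabolic equation. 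Concretely, writing $\phi=v$ as in Section~3, one has $h(D_t v,v)=h(P\widetilde H-H,v)$, while (cf.\ the computation leading to Proposition~\ref{prop partial_t of difference of A})
\[
(D_t-\Delta_g)\,v \;=\; (\tilde g^{-1}-g^{-1})*\widetilde A \;-\; g^{-1}*(\widetilde\Gamma-\Gamma)*\widetilde F_* \;+\; O_{\bar R}(d).
\]
The first term on the right is first order in $\phi$ and is harmless, but the second contains $\widetilde\Gamma-\Gamma$, which is a \emph{second-order} quantity in $\phi$ (it involves $\partial(g-\tilde g)$, hence $\nabla^2\phi$). The resulting contribution $|\phi|\cdot|\widetilde\Gamma-\Gamma|$ cannot be absorbed by Cauchy--Schwarz into $C(1+|A|^2+|\widetilde A|^2)|\phi|^2+\epsilon|\nabla\phi|^2$; there is simply no $-|\nabla^2\phi|^2$ term available to eat it. This is precisely the manifestation of the tangential degeneracy of the MCF operator.

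The paper resolves this by enlarging the energy density to
\[
\mathcal Q^\alpha \;=\; d^2 + |\Gamma-\widetilde\Gamma|^2 + t^{-2\alpha}\,|P\widetilde F_*-F_*|^2 + |P\widetilde A-A|^2,
\]
so that the troublesome second-order difference $P\widetilde A-A$ (equivalently $\Gamma-\widetilde\Gamma$) is itself part of the energy. The point is that $A$, unlike $F$, satisfies a \emph{genuinely} parabolic equation along the flow; the heat-operator term $2h(P\widetilde\Delta\widetilde A-\Delta A,\,P\widetilde A-A)$ can be integrated by parts to produce a good term $-\int|P\widetilde\nabla\widetilde A-\nabla A|^2$, which then absorbs all third-order differences appearing in $\partial_t|\Gamma-\widetilde\Gamma|^2$ and $\partial_t|P\widetilde F_*-F_*|^2$ (Lemmas~\ref{lemma partial_t of difference of DF} and (\ref{partial_t of difference of Gamma})). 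With this energy, the differential inequality $\frac{d}{dt}E_r^\alpha\le C_2 t^{-2\alpha}(E_r^\alpha+e^{-r^2/2a})$ does hold, and your Gronwall\,/\,volume-growth\,/\,$C^3$-limit argument goes through verbatim. In short: keep your global strategy, but replace the zeroth-order energy $\int|\phi|^2 e^{-\beta r^2}d\mu$ by the coupled energy $\int\mathcal Q^\alpha\,\phi e^{-\eta}d\mu$ of the paper.
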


The proof of the above theorems, like all other uniqueness results in MCF, use the parabolicity of the MCF equation. The technical issue is that the equation is not strictly parabolic - it's invariant under diffeomorphisms. In the previous approaches \cite{ChenYin}, \cite{ChauChenHe}, \cite{ChauChenYuan}, \cite{KochLamm}, \cite{Wang}, \cite{Lubbe}, they use the well-known De Turck trick to construct a family of diffeomorphisms so that the resulting equation (Mean curvature De Turck flow) becomes strictly parabolic (note that the use of De Turck tricks are implicit in the graphical case, see for example p.548-549 in \cite{EckerHuisken}).

Our proof of theorem \ref{Uniqueness theorem} and theorem \ref{Uniqueness theorem for t^alpha bound} use directly the parabolic equation satisfied by the second fundamental form. We employ an energy argument first performed by Kotschwar in \cite{Kotschwar}, where he proves a uniqueness result for non-compact Ricci flow. The energy argument was then used again for other geometric flows \cite{Kotschwar2}, \cite{Kotschwar5}, \cite{LotayWei}, \cite{BHV}, \cite{Bell}, \cite{Alen}, \cite{SongWang}. The main idea is to consider the quantity
$$E(t) = \int_{\Sigma} \mathcal Q d\mu_t,$$
where $\mathcal Q$ is a quantity so that $\mathcal Q = 0$ implies uniqueness. The goal is to show $E(t)=0$ given that $E(0) = 0$. For example, in the Ricci flow situation \cite{Kotschwar}, $\mathcal Q$ contains a term of the form $t^{-\beta} |g-\tilde g|^2$ for some $\beta$, where both $g, \tilde g$ are solutions to the Ricci flow.

In our situation, we choose our $\mathcal Q$ to contain the zeroth order term
\begin{equation} \label{d in intro}
d_{M} (F(t,x), \widetilde F(t,x)).
\end{equation}
As we will see later, first and second order terms should also be present in $\mathcal Q$ in order to obtain a nice differential inequality for $E(t)$. We do not need higher order quantity though: the parabolic nature of MCF gives a nice parabolic equation for the second fundamental form $A$, and an integration by part give a strictly {\bf negative} term containing the third order quantities $\nabla A$, which cancels all other third order quantities. As we will see later, cut-off functions are inserted in the energy $E$ to deal with the non-compact situation. 

Let us point out one key technical difference between our works and those in \cite{Kotschwar}, \cite{Kotschwar2}, \cite{LotayWei}, \cite{BHV}, \cite{Bell}, \cite{Alen}: In their energy arguments, the flows they consider are intrinsic, as opposed to MCF which is extrinsic. Not only that both the curvatures of $\Sigma$ and $M$ play a role, but also that the geometric quantities of two a-priori different MCFs live in different vector bundles on $\Sigma$. Thus one needs to use a bundle isomorphisms $P$ to identity these bundles before estimating the difference. In our situation, we construct $P$ using a parallel transport along the shortest paths between two MCFs. We remark that the same construction is also carried out in \cite{SongWang}, \cite{McGahagan} in the context of Schr{\" o}dinger flow. We expect the same argument should work for other extrinsic geometric flows.

As a by-product, we obtain the following uniqueness result for Ricci flow, which generalizes results in \cite{ChenZhu}, \cite{Kotschwar}.

\begin{thm} \label{RicciFlow}
Let $(M,g_0)$ be a smooth complete noncompact Riemannian manifold. Let $g(t),\tilde g(t), t\in [0,T]$ be two smooth complete solutions to Ricci flow with initial metric $g_0$. Suppose that $g(t), \tilde g(t)$ are uniformly equivalent to $g_0$ and 
$$|Rm|+|\widetilde{Rm}|\leq \frac{L}{t}$$
for some constant $L>0$. Then $g(t)=\tilde g(t)$ for all $t\in [0,T]$.
\end{thm}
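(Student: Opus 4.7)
The plan is to adapt the energy method outlined in the introduction (due originally to Kotschwar) to the intrinsic Ricci flow setting; since both metrics live on the same manifold $M$, no bundle isomorphism is required. First I would introduce the three difference tensors
\[
h := g - \tilde g, \qquad A := \nabla - \tilde\nabla, \qquad U := Rm - \widetilde{Rm},
\]
where $A$ is the $(1,2)$-tensor of Christoffel symbol differences. A direct calculation from $\partial_t g = -2\,Ric$ yields a schematic coupled system
\[
\partial_t h = Rm*h+\text{(lower)},\; \partial_t A = \nabla U + Rm*A + \nabla Rm*h,\; \partial_t U = \Delta U + Rm*U + \nabla Rm*A + \nabla^2 Rm*h,
\]
where $\Delta$ is the rough Laplacian in $g$. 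The key structural feature is that $U$ obeys a parabolic equation, so integration by parts of $\int \phi^2\langle U,\Delta U\rangle\, d\mu_g$ supplies the favourable dissipation $-\int \phi^2|\nabla U|_g^2\, d\mu_g$ that absorbs all third-order cross terms.

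Next, for a basepoint $x_0\in M$ and $R\gg 1$, let $\phi_R$ be a $g_0$-cutoff equal to $1$ on $B_R^{g_0}(x_0)$, vanishing outside $B_{2R}^{g_0}(x_0)$, with $|\nabla\phi_R|_{g_0}\le C/R$. Define the weighted energy
\[
E_R(t) := \int_M \phi_R^2 \Bigl( |h|_g^2 + t^{1+\delta}\,|A|_g^2 + t^{2+\delta}\,|U|_g^2 \Bigr)\, d\mu_g
\]
for a small $\delta>0$ to be fixed. The $t^{k+\delta}$ weights match the singular curvature bound: each zeroth-order coupling $\phi_R^2\, t^{k+\delta}|Rm|\,|\cdot|^2$ is dominated by $L\phi_R^2\, t^{k+\delta-1}|\cdot|^2$, a term of the form $CE_R(t)/t$. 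The only dangerous third-order cross term, $\int \phi_R^2\, t^{1+\delta}\langle A,\nabla U\rangle\,d\mu_g$ from $\partial_t A$, is split by Cauchy--Schwarz as $\epsilon\, t^{2+\delta}|\nabla U|^2+\epsilon^{-1}t^\delta|A|^2$, with the first absorbed into the dissipation and the second again of the form $CE_R(t)/t$. Terms $\nabla Rm*A$ and $\nabla^2 Rm*h$ in $\partial_t U$ are treated by moving derivatives off the background curvatures through an additional integration by parts, using the standard Shi-type bounds $|\nabla^k Rm|\le C\, t^{-1-k/2}$ that accompany $|Rm|\le L/t$. Derivatives of $\phi_R$ contribute a boundary term $\mathcal{B}_R(t)$ supported in $B_{2R}^{g_0}\setminus B_R^{g_0}$. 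Collecting the estimates yields
\[
\frac{dE_R}{dt} \le \frac{C}{t}\,E_R(t) + \mathcal{B}_R(t).
\]

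For the conclusion, observe that under $|Rm|+|\widetilde{Rm}|\le L/t$ the Shi-type bounds give $|U|\le 2L/t$ and $|A|\le C t^{-1/2}$, so the integrand of $E_R$ is pointwise dominated by $|h|_g^2+C t^\delta$; combined with the continuity $g(t),\tilde g(t)\to g_0$ implicit in the hypothesis that both flows have initial metric $g_0$, dominated convergence gives $E_R(t)\to 0$ as $t\to 0^+$ for each fixed $R$. The singular Gr\"onwall inequality applied to $E_R$ on $(0,t_0]$ then forces $E_R\equiv 0$; letting $R\to\infty$ (and using the uniform equivalence $g\sim g_0$ to show $\int_0^{t_0}\mathcal{B}_R(t)\,dt\to 0$) gives $g\equiv\tilde g$ on $[0,t_0]$ for some small $t_0>0$. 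Since $|Rm|,|\widetilde{Rm}|\le L/t_0$ are then uniformly bounded on $[t_0,T]$, the classical bounded curvature uniqueness theorem of Chen--Zhu \cite{ChenZhu} (equivalently Kotschwar \cite{Kotschwar}) extends the coincidence to all of $[0,T]$. The main technical obstacle will be the bookkeeping for the $\nabla Rm$ and $\nabla^2 Rm$ couplings: they cannot be paired directly against $|\nabla U|^2$, so one must integrate by parts once more and carefully track that every resulting term either fits the $E_R/t$ framework or is absorbed into a small fraction of the dissipation.
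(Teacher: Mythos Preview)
Your plan follows the same broad strategy as the paper---Kotschwar's energy method with the three difference tensors $h=g-\tilde g$, $A=\Gamma-\tilde\Gamma$, $U=Rm-\widetilde{Rm}$---but the argument breaks down at the Gr\"onwall step. From
\[
\frac{d}{dt}E_R\le \frac{C}{t}\,E_R+\mathcal B_R(t)
\]
together with merely $E_R(t)\to 0$ as $t\to 0^+$, you \emph{cannot} conclude $E_R\equiv 0$: the integrating factor is $t^{-C}$, so one needs $E_R(s)/s^{C}\to 0$ as $s\to 0^+$. Your pointwise bound $|h|_g^2+Ct^\delta$ yields at best $E_R=O(t^\delta)$ with $\delta$ small, while $C$ depends on $L,n,\lambda$ and is in general large. (Incidentally, the bound $|A|\le Ct^{-1/2}$ does not follow from Shi's estimates; integrating $|\partial_t A|\lesssim|\nabla Rm|\le Ct^{-3/2}$ diverges.) Even if you repair the vanishing rate---which is possible, since on the compact support of $\phi_R$ the two smooth flows agree to infinite order in $t$ at $t=0$, so in fact $E_R(s)=O(s^m)$ for every $m$---you are still left with the boundary contribution $\int_0^{t_0}\mathcal B_R(\tau)\,\tau^{-C}\,d\tau$. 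With only a plain cutoff this has no reason to tend to zero as $R\to\infty$: the annular $g$-volume can grow like $e^{C'R}$ (volume comparison applied to $g(T)$, which has curvature bounded by $L/T$, transferred to $g(t)$ via the uniform equivalence), and the factor $R^{-2}$ from $|\nabla\phi_R|^2$ cannot compete with it.

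The paper resolves both issues at once. It inserts a Gaussian weight $e^{-\eta}$ with $\eta=\rho^2/(a-bt)$ and takes the cutoff to a variable power $\phi=\varphi^p$ with $p=p(R)$; after Young's inequality the boundary term acquires a factor $e^{-R^2/(4a)}$ that dominates the exponential volume growth and survives division by $t^C$. The paper also weights the energy by \emph{negative} powers of $t$, namely $t^{-2}|h|^2+t^{-1}|A|^2+|U|^2$; then the infinite-order agreement at $t=0$ gives $\lim_{s\to 0}E_R(s)/s^{\alpha}=0$ for every $\alpha>0$ by dominated convergence on the compact support of $\phi$, exactly what the non-integrable coefficient $C/t$ demands. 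Your positive-power weights are not fatal in themselves, but without the Gaussian weight and without invoking the infinite-order vanishing, the ``singular Gr\"onwall'' step as written is simply false.
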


In contrast with the result in \cite{Alen}, we do not assume any growth rate on $|Rm(g_0)|_{g_0}$ and the size of $L$, while in the expense of assuming the uniform equivalence of metrics. The last main result is the following backward uniqueness theorem for MCF.

\begin{thm} \label{Backward uniqueness theorem}
Let $(M, h)$ be a complete non-compact Riemannian manifold with positive lower bound on injectivity radius and uniform upper bound on $|\bar\nabla^i \bar R|$ for $i\le 4$. Let $F, \widetilde F : [0,T]\times \Sigma \to M$ be smooth MCFs with uniformly bounded second fundamental forms. If $F(T, \cdot) = \widetilde F(T, \cdot)$, then $F = \widetilde F$ for all $t$.
\end{thm}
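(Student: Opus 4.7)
The plan is to run an energy-type argument in the style of Theorem \ref{Uniqueness theorem} but combined with the time-reversal/Carleman technique developed by Kotschwar for backward uniqueness of the Ricci flow. Setting $\tau = T - t$, both $F(\tau, \cdot)$ and $\widetilde F(\tau, \cdot)$ satisfy a backward MCF on $\Sigma\times[0,T]$ and agree at $\tau = 0$. Since $|A|, |\widetilde A|$ and the ambient curvatures are uniformly bounded, a continuity argument produces $\delta > 0$ so that for $\tau \in [0,\delta]$ the points $F(\tau,x)$ and $\widetilde F(\tau,x)$ always lie within each other's injectivity radius, and on this time strip the bundle isomorphism $P$ defined by parallel transport along the shortest $M$-geodesic from $F(\tau,x)$ to $\widetilde F(\tau,x)$, exactly as in the proof of Theorem \ref{Uniqueness theorem}, is smooth.

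Next I would introduce the difference quantities $U$ (geodesic separation of $F$ and $\widetilde F$), $V = P\circ dF - d\widetilde F$, $W = P*A - \widetilde A$, and the further derivative $Z = P*\nabla A - \widetilde\nabla \widetilde A$. A computation parallel to the forward case, now carried out under $\partial_\tau = -\partial_t$, yields a system of differential inequalities schematically of the form
\begin{align*}
|\partial_\tau U| &\leq K(|U| + |V|),\\
|\partial_\tau V| &\leq K(|U| + |V| + |W|),\\
|(\partial_\tau + \Delta) W| &\leq K(|U| + |V| + |W| + |Z|),\\
|(\partial_\tau + \Delta) Z| &\leq K(|U| + |V| + |W| + |Z| + |\nabla Z|),
\end{align*}
where $K$ depends on $\sup(|A| + |\widetilde A|)$, the injectivity radius lower bound, and the ambient bounds on $|\bar\nabla^i \bar R|$ (for $i\leq 4$, which is needed because the $Z$-equation requires one additional differentiation of the $A$-equation compared to the forward proof). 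The critical feature is that, in the backward direction, the Laplacian in the last two lines carries the \emph{wrong} sign to be absorbed by ordinary integration by parts.

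To turn this system into a uniqueness statement I would form the weighted energy
$$\mathcal{E}(\tau) = \int_\Sigma \left(|U|^2 + |V|^2 + |W|^2 + \tau\,|Z|^2\right) e^{-\psi(r)}\,\eta^2\, d\mu_\tau,$$
where $\eta$ is a spatial cutoff (needed because $\Sigma$ is non-compact) and $\psi$ is a Carleman-type weight chosen so that after integration by parts the positive Dirichlet contributions $\int|\nabla W|^2 e^{-\psi}\eta^2$ and $\int|\nabla Z|^2 e^{-\psi}\eta^2$ dominate the destabilising terms produced by $\Delta W$ and $\Delta Z$. Combining these absorption estimates with the first two lines of the system, a Gronwall (or log-convexity \`a la Lees--Protter) argument yields $\mathcal{E}(\tau) \equiv 0$ on $[0,\delta]$, so $F = \widetilde F$ on $[T-\delta, T]$; a standard open--closed argument then extends the coincidence set backward to all of $[0,T]$.

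The main obstacle I expect is the simultaneous handling of two delicate features. First, the non-compactness of $\Sigma$ forces spatial cutoffs whose derivatives must interact controllably with the Carleman weight; the errors introduced by $\eta$ can only be absorbed after using the ambient curvature bounds in a volume comparison on $\Sigma$. Second, the reversed sign of the Laplacian, which is what makes backward uniqueness genuinely harder than forward uniqueness, dictates that $\psi$ must be chosen so that the weighted Poincar\'e/Hardy inequality built into the integration by parts supplies exactly the positive $|\nabla W|^2$ and $|\nabla Z|^2$ terms needed to close the system. Achieving this balance, and verifying that every error integral is finite via the hypotheses on $|\bar\nabla^i \bar R|$ ($i\le 4$) and on $|A|$, is where I expect most of the technical effort to concentrate.
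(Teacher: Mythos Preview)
Your overall architecture --- reduce to a coupled PDE--ODE system in the difference quantities and then invoke a Carleman/log-convexity mechanism --- is exactly the paper's strategy. But there are two concrete gaps.

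First, your ODE block is too small to close the parabolic block. When you compute $(\partial_\tau+\Delta)W$ and especially $(\partial_\tau+\Delta)Z$, you must compare $\Delta$ with $\widetilde\Delta$, and
\[
\Delta-\widetilde\Delta \;=\; g^{-1}*\nabla(\Gamma-\widetilde\Gamma)\;+\;g^{-1}*(\Gamma-\widetilde\Gamma)*\widetilde\nabla\;+\;(g^{-1}-\tilde g^{-1})*\widetilde\nabla^2.
\]
Thus $\Gamma-\widetilde\Gamma$ and $\nabla(\Gamma-\widetilde\Gamma)$ appear on the right and are \emph{not} controlled by $|U|+|V|+|W|+|Z|$ alone. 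The paper therefore puts
\[
Y \;=\; v \oplus (P\widetilde F_*-F_*)\oplus(\Gamma-\widetilde\Gamma)\oplus\nabla(\Gamma-\widetilde\Gamma)
\]
in the ODE bundle and verifies $|D_t Y|\le C(|X|+|\nabla X|+|Y|)$; without those last two summands your inequalities for $W$ and $Z$ do not hold as written.

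Second, your description of the weight is not the mechanism that actually yields backward uniqueness. A purely spatial weight $e^{-\psi(r)}$ cannot turn the wrong-sign Laplacian into a favorable Dirichlet term; what is needed is a \emph{time}-dependent Carleman weight (equivalently, a frequency/log-convexity argument) of the type in \cite{Kotschwar3}, \cite{Kotschwar4}. The paper sidesteps redoing this entirely: once the PDE--ODE inequalities
\[
|(D_t-\Delta)X|\le C(|X|+|\nabla X|+|Y|),\qquad |D_t Y|\le C(|X|+|\nabla X|+|Y|)
\]
are established, it simply invokes Theorem~3 of \cite{Kotschwar4} (checking that $g$, $\partial_t g$, $\nabla\partial_t g$, $R_\Sigma$, and $[D_t,\nabla]$ are uniformly bounded), which already contains the correct Carleman weight and the non-compact cutoff argument. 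Your plan to build $\mathcal E(\tau)$ by hand with a spatial $\psi$ and a Gronwall step would not close; if you insist on doing it directly, you should instead follow the weighted-$L^2$/frequency estimates in \cite{Kotschwar4}.
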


The proof of theorem \ref{Backward uniqueness theorem} uses again that the second fundamental form $A$ and its derivatives $\nabla A$ both satisfy strictly parabolic equations. While the lower order quantities do not, one can show that they satisfy an ordinary differential inequality. These coupled inequalities are sufficient to show theorem \ref{Backward uniqueness theorem} by a general backward uniqueness theorem in \cite{Kotschwar4}. The reader may find more historical remarks in the introduction of \cite{Kotschwar4}.

One slight technical issue is that the distance (\ref{d in intro}) is non-differentiable when it's zero and thus we need another zeroth order quantity. We treat $\widetilde F$ as a graph of $F$ and represent $\widetilde F$ by a section on the pullback bundle $F^{-1}TM$. The assumptions on the fourth covariant derivatives of $\bar R$ in theorem \ref{Backward uniqueness theorem} is used in estimating the parabolic equation for $\nabla A$, which we do not need in the proof of theorem \ref{Uniqueness theorem}.

{When the ambient space $M$ in Theorem \ref{Backward uniqueness theorem} is Euclidean, the result is proved in \cite{Huang} in the co-dimension one case and recently in \cite{Zhang} for arbitrary co-dimensions.}

In section 2, we fix the notations and prove some elementary results. The parallel transport $P$ will be studied in section 3 and 4. The main estimates are performed in section 5. Theorem \ref{Uniqueness theorem}, theorem \ref{Uniqueness theorem for t^alpha bound} and theorem \ref{RicciFlow} are proved in section 6 and theorem \ref{Backward uniqueness theorem} is proved in section 7.

{\bf Acknowledgement} The first author would like to thank Professor Luen-Fai Tam for his constant support and encouragement. The second author would like to thank Professor Jingyi Chen for the discussions and pointing out the reference \cite{SongWang}. Part of the works was done when the second author visited The Chinese University of Hong Kong and he would like to thank Professor Martin Li for the hospitality.


\section{Prelminary and notations}
In this section, we review some definitions and results in basic submanifold theory and MCF. Let $\Sigma$ be a smooth manifold and $(M, h)$ be a smooth Riemannian manifold. Let
$$F, \widetilde F : \Sigma \times [0,T] \to M$$
be two families of smooth immersions.

Next we introduce several notations. We write only the notations for $F$. A tilde will be added to the corresponding notations for $\widetilde F$. We use $(x^1,\cdots x^n)$ and $(y^1, \cdots, y^N)$ respectively to denote the local coordinates on $\Sigma$, $M$. We use $i,j,\cdots$ to denote the indices of $\Sigma$, $\alpha, \beta, \cdots$, and $\alpha', \beta' \cdots$ respectively to denote the indices on $F^{-1}  TM$, $\widetilde F^{-1}TM$. For each $t$, let $g=g(t)= F(t,\cdot)^*h$ and $\nabla $ be the Levi-Civita connection with respect to $g$. We use the same notation $\nabla$ to denote the induced connection on all $(p,q)$-tensor bundle $T^{p,q}\Sigma$.

We say that the family of immersions $F$ is uniformly continuous with respect to $t$, if for all $\delta >0$ and $s\in [0,T)$, there is $s_\delta >0$ so that
\begin{align} \label{F C^0  close to F_0}
d_M (F(t, x) ,F(s,x))\le \delta, \ \ \ \forall (t,x)\in [s,s+s_\delta]\times \Sigma.
\end{align}

We say that the induced metric $g(t)$ is uniformly equivalent to $g_0 = F_0^*h$, if there is $\lambda >1$ so that
\begin{equation} \label{uniformly equivalence metric}
\lambda^{-1}g_0\leq g(t) \leq \lambda g_0, \ \ \ \forall (t,x)\in [0,T]\times \Sigma.
\end{equation}

On the pullback bundle $N:=F^{-1} T M$ we have the connection induced from $h,F$:
\begin{equation} \label{connection on F-1 TM}
\nabla_i^F Y^\alpha = \partial_i Y^\alpha+ \Gamma^\alpha_{\beta \gamma}F^\beta_i  Y^\gamma, \ \ \ Y\in \Gamma( M, N).
\end{equation}
Here $\Gamma^\alpha_{\beta\gamma}$ denote the Christoffel symbols of the Levi-Civita connection $\bar\nabla$ on $(M,h)$. Note that $\Gamma^\alpha_{\beta\gamma}$ is indeed $\Gamma^\alpha_{\beta\gamma}\circ F$, but we suppress $F$ for simplicity. We also remark that
$$ \nabla_i Y = \bar\nabla_{F_i} \widetilde Y,$$
where $F_i = \partial_i F$ and $\widetilde Y$ is any extension of $Y$ in $M$. We use the same notation $\nabla ^F$ to denote the connection induced by $\nabla $ and $\nabla ^F$ on any $N$-valued tensor bundle. Thus there could be six notations in total:
\[ \nabla , \widetilde\nabla, \nabla ^F, \nabla ^{\widetilde F}, \widetilde \nabla ^F, \widetilde \nabla^{\widetilde F}.\]
However, for simplicity we use only $\nabla$ and $\widetilde \nabla$. It will be clear from the context which connection we are using.

Next we consider covariant derivatives with respect to time. Define the covariant time derivative $D_t$ on $\Gamma(\Sigma, T^{0,p}\Sigma \otimes  N)$ by
\begin{equation*}
D _t Y^\alpha_{i_1\cdots i_p} = \partial_t Y^\alpha_{i_1\cdots i_p} + \Gamma^\alpha_{\beta\gamma} Y^\beta_{i_1\cdots i_p} F^\gamma_t.
\end{equation*}
Note that when acts on vector fields along $F$, $D_t$ is metric with respect to $h$. That is,
\begin{equation*}
\partial_t h (Y, Z) = h(D_t Y, Z) + h(Y, D_t Z),\ \ \ Y, Z \in \Gamma(\Sigma, N).
\end{equation*}

Next we introduce several standard geometric quantities from an immersion. For each fixed $t$, the differential of $F(t,\cdot) : \Sigma \to M$ is denoted $F_*$, thus 
\begin{equation*}
(F_*)_i^\alpha = F^\alpha_i =\frac{\partial F^\alpha}{\partial x^i} \in \Gamma (\Sigma, T^*\Sigma \otimes N).
\end{equation*}
The second fundamental form $A$ is the covariant derivative of $F_*$:
\begin{equation*}
\begin{split}
A^\alpha_{ij} &= (\nabla F_*)^\alpha_{ij} \\
&= F^\alpha_{ij} - \Gamma^k_{ij} F^\alpha_k + \Gamma^\alpha_{\beta\delta } F^\beta_i F^\delta_j \in \Gamma(\Sigma, T^{0,2}\Sigma \otimes N) \\
\end{split}
\end{equation*}
The mean curvature vector $H$ is the trace of $A$ given by 
$$H^\alpha = g^{ij} A^\alpha_{ij} \in \Gamma (\Sigma, N).$$

We say that a family of immersions $F: [0,T] \times \Sigma\to M$ is a MCF starting from $F_0$ if
\begin{equation} \label{MCF equation}
\frac{\partial F}{\partial t} = H, \ \ \ F(0,\cdot) = F_0(\cdot).
\end{equation}

\begin{rem}
In this paper we use the following convention: We use $B_k$ (resp. $B_k^{loc}$) to denote the (resp. local) bound on $|\bar\nabla^k \bar R|$. Unless otherwise specified, we use $C$ to denote constants that depend only on the dimensions of $\Sigma$ and $M$, constants $\lambda$ in (\ref{uniformly equivalence metric}), $L$ in the statement of theorem \ref{Uniqueness theorem} and the lower bound on injectivity radius $i_0$. Constants that depend also on $B_0, B_1, \cdots, B_k$ (resp. $B^{loc}_0, B^{loc}_1, \cdots, B^{loc}_k$) are denoted $C_k$ (resp. $C_k^{loc}$). The explicit values of the constants $C, C_k, C_k^{loc}$ are not important and might change from line to line unless otherwise specified.
\end{rem}

The following simple lemma is used a lot in this paper. 

\begin{lem} \label{lemma DF bounded}
We have $|F_*|$, $|\widetilde F_*| \le C$.
\end{lem}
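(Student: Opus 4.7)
The plan is to observe that $|F_*|$ is essentially tautological once one sees what metric is being used on each factor of $T^*\Sigma\otimes N$. The tensor $F_*\in \Gamma(\Sigma, T^*\Sigma\otimes N)$ has components $(F_*)_i^\alpha = F_i^\alpha$, and the natural norm uses $g=F^*h$ on $T^*\Sigma$ and $h$ on $N=F^{-1}TM$. With respect to these metrics I would simply compute
\begin{equation*}
|F_*|_{g,h}^2 \;=\; g^{ij}\,h_{\alpha\beta}\,F_i^\alpha F_j^\beta \;=\; g^{ij}g_{ij} \;=\; \dim\Sigma,
\end{equation*}
since $g_{ij} = h_{\alpha\beta}F_i^\alpha F_j^\beta$ is precisely the induced metric. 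This already gives the uniform bound $|F_*|\le \sqrt{n}$, with $n=\dim\Sigma$, and the same argument applied to $\widetilde F$ gives $|\widetilde F_*|\le \sqrt{n}$.

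If instead the convention in later sections is to measure the $T^*\Sigma$-factor with the fixed reference metric $g_0 = F_0^*h$ (which is natural for the energy argument, since the bundles for $F$ and $\widetilde F$ must be compared against a common metric on $\Sigma$), the same calculation gives $|F_*|_{g_0,h}^2 = g_0^{ij}g_{ij}$, and the uniform equivalence hypothesis (\ref{uniformly equivalence metric}) immediately yields $|F_*|_{g_0,h}^2 \le \lambda n$, and likewise for $\widetilde F$ using $\tilde g$. Either way the bound is a direct consequence of the definition of the induced metric together with assumption (\ref{uniformly equivalence metric}), so no obstacle arises; the main point of stating the lemma is simply to fix a constant $C$ depending only on $n$ and $\lambda$ for use in the estimates of later sections.
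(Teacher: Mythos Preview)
Your proof is correct and matches the paper's approach: the computation $|F_*|^2 = g^{ij}h_{\alpha\beta}F_i^\alpha F_j^\beta = g^{ij}g_{ij} = n$ is exactly what the paper does, and the bound on $|\widetilde F_*|$ then follows from the uniform equivalence (\ref{uniformly equivalence metric}). The only minor point is the convention: the paper measures \emph{all} $T^*\Sigma$-factors with $g$ (not with $\tilde g$ or $g_0$), so that $|\widetilde F_*|^2 = g^{ij}\tilde g_{ij}$ is not exactly $n$ but is bounded via (\ref{uniformly equivalence metric}) --- this is essentially the variant you anticipate in your second paragraph.
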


\begin{proof}
Note that $|F_*|^2 = h_{\alpha\beta} g^{ij} F^\alpha_i F^\beta_j = g^{ij}g_{ij} = n$. Thus $|\widetilde F_*|^2 \le C$ by (\ref{uniformly equivalence metric}).
\end{proof}

Next we recall the differential equations of the following quantities along the MCF, the proof can be found in \cite{S}.

\begin{lem} \label{lemma de of some quantities}
Under the MCF, we have
\begin{align}
\label{de of g} \partial_t g_{ij} &=-2 g^{pq}h (A_{pq}, A_{ij}), \\
\label{de of g-1} \partial_t g^{ij} &= 2g^{pq} g^{ik} g^{jl} h(A_{pq}, A_{kl}), \\
\label{de of dmu} \partial_t d\mu &= -|H|^2 d\mu, \\
\label{de of Gamma ijk} \partial _t \Gamma_{ij}^k&=-g^{kl}\left( \nabla_ih( H,A_{jl})+ \nabla_jh( H,A_{il})- \nabla_lh( H,A_{ij})  \right).
\end{align}
\end{lem}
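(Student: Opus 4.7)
The plan is to verify each identity by direct computation, using the key fact that under MCF the time covariant derivative and the spatial covariant derivative commute on $F$, i.e.\ $D_t F_i = \nabla_i F_t = \nabla_i H$. This follows because the Christoffel symbols of $h$ are symmetric and $D_tF_i = \partial_t\partial_i F + \Gamma^\alpha_{\beta\gamma}F^\beta_i F^\gamma_t$ coincides with $\nabla_i F_t$ in coordinates. Since the lemma is standard and a reference is cited, I will only sketch each step.

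For (2.11), I would write $g_{ij} = h(F_i,F_j)$ and use that $D_t$ is metric with respect to $h$:
\[
\partial_t g_{ij} = h(D_tF_i,F_j) + h(F_i, D_tF_j) = h(\nabla_i H, F_j) + h(\nabla_j H, F_i).
\]
Since $H$ is a normal vector field, $h(H,F_j)=0$, and differentiating in $\partial_i$ gives $h(\nabla_iH,F_j) = -h(H, A_{ij}) = -g^{pq}h(A_{pq},A_{ij})$ (using $H = g^{pq}A_{pq}$ and $\nabla_iF_j=A_{ij}$ modulo tangential terms that are $h$-orthogonal to $H$). Symmetrizing in $i,j$ yields (2.11).

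For (2.12), I differentiate $g^{ij}g_{jk}=\delta^i_k$ to get $\partial_t g^{ij} = -g^{ip}g^{jq}\partial_t g_{pq}$ and substitute (2.11). For (2.13), I use Jacobi's formula $\partial_t \log\det g = g^{ij}\partial_t g_{ij}$; tracing (2.11) gives $g^{ij}\partial_t g_{ij} = -2g^{ij}g^{pq}h(A_{pq},A_{ij}) = -2|H|^2$, so $\partial_t d\mu = -|H|^2 d\mu$. For (2.14), I invoke the standard variational formula
\[
\partial_t \Gamma^k_{ij} = \tfrac{1}{2}g^{kl}\bigl(\nabla_i\partial_t g_{jl} + \nabla_j \partial_t g_{il} - \nabla_l \partial_t g_{ij}\bigr),
\]
plug in (2.11), and use $\nabla g = 0$ together with $g^{pq}A_{pq}=H$ to pull $g^{pq}$ past the covariant derivative and rewrite $g^{pq}h(A_{pq},A_{\star\star}) = h(H,A_{\star\star})$, yielding (2.14).

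No step presents a serious obstacle: the only care needed is the sign bookkeeping in the tangential/normal decomposition when proving $h(\nabla_i H, F_j) = -h(H,A_{ij})$, and the observation that $\nabla_i F_j$ differs from $A_{ij}$ only by a tangential term, which is annihilated by the inner product with the normal vector $H$. All four formulas are then immediate consequences of (2.11) and the definitions.
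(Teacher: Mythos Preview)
Your proposal is correct and is essentially the standard derivation; the paper itself does not supply a proof but simply refers to Smoczyk's survey \cite{S}, where the computation proceeds exactly as you describe. One minor remark: in the paper's conventions $A_{ij}=\nabla_i F_j$ already incorporates the intrinsic Christoffel term $-\Gamma^k_{ij}F_k$, so $A_{ij}$ is purely normal and your hedge about ``modulo tangential terms'' is unnecessary (though harmless).
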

We also need the equation for the higher covariant derivatives of $A$. Recall that in the notation of Chen and Yin \cite{ChenYin}, we have $F_* = \nabla F, A = \nabla^2 F, \nabla A = \nabla^3 F$ and so on. Proposition 2.3 in \cite{ChenYin} together with Gauss equation give

\begin{equation} \label{de of nabla^i A}
(D_t - \Delta) \nabla^k F = \sum_{l=0}^{k-1} \nabla^l [ h(A,A) * g^{-2} + \bar R * (\nabla F)^a *g^{-b}] * \nabla^{k-l} F,
\end{equation}
where $a=2$ or $4$, $b=1$ or $2$, $*$ are any contraction of tensors and $\bar R$ include any contraction of the Riemann curvature tensor on $M$ with $h^{-1}$.

In \cite{ChenYin}, the authors derive an apriori estimates for $|\nabla^k F|^2$ assuming that the second fundamental form $\nabla^2 F$ is uniformly bounded. When the second fundamental form $|A|^2(t, x)$ is bounded by {$Lr^{2-\epsilon}(F_0(x))/t$}, one can modify the proof of theorem 3.2 in \cite{ChenYin} to obtain the following lemma.

\begin{lem} \label{lemma first order estimates}
Let $L,\epsilon>0$ and let $(M, h)$ be a complete Riemannian manifold with 
$$|\bar R|\le B_0, |\bar\nabla \bar R|\le B_1, |\bar\nabla^2 \bar R|^2 \le Lr^{2-\epsilon}, $$
{where $r(y) = d_M(y, y_0)$}. Let $F$ be a MCF so that {$|A|^2(t, x) \le Lr^{2-\epsilon} (F_0(x)) /t$}. Then
\begin{equation} \label{first order estimates}
|\nabla A (t, x)| \le \frac{C_1 L r^{2-\epsilon}(F_0(x))}{t}.
\end{equation}
\end{lem}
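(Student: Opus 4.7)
The plan is to deduce the lemma by applying Chen-Yin's Theorem 3.2, which gives $|\nabla A|\le C(M^2 + M/\sqrt t)$ under the hypotheses $|A|\le M$ and $|\bar\nabla^i\bar R|\le \bar B_i$ for $i = 0,1,2$, on a suitably localized parabolic cylinder. The localization freezes the spatial factor $r^{2-\epsilon}(F_0(x))$ in the hypothesis and converts the $1/t$ blow-up of $|A|$ into a uniform bound on a short time window $[t_0/2, t_0]$.

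Fix $(x_0, t_0) \in \Sigma \times (0, T]$ and set $r_0 := r(F_0(x_0))$, $\rho := 1 + L r_0^{2-\epsilon}$, and $M^2 := 2\rho/t_0$. By smoothness of $F_0$ together with the uniform metric equivalence (\ref{uniformly equivalence metric}), one can choose $\delta > 0$ so that on the $g_0$-geodesic ball $B := B_{g_0}(x_0,\delta)$ one has $r(F_0(\cdot)) \le 2r_0$. Since $|F_t|\le n|A|\le nM$ on $B \times [t_0/2, t_0]$, integration in time gives $r(F(t,\cdot)) \le 3 r_0$ throughout this cylinder, whence $|A|^2\le M^2$ and $|\bar\nabla^2\bar R| \le C\sqrt{L}\,r_0^{1-\epsilon/2}\le C\sqrt\rho$ there, while $|\bar R|\le B_0$ and $|\bar\nabla\bar R|\le B_1$ are automatic. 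In particular, all quantities entering the evolution inequalities are uniformly controlled on this fixed cylinder.

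On this cylinder Chen-Yin's Bernstein argument applies, using an auxiliary function of the form
\begin{equation*}
\Phi(t,x) = \eta^2(x)\bigl(M^2 + |A|^2\bigr)(t - t_0/2)|\nabla A|^2,
\end{equation*}
where $\eta$ is a Hamilton-type spatial cutoff with $\eta(x_0) = 1$, $\eta|_{\partial B} = 0$, and $|\nabla\eta|^2 + |\nabla^2\eta| \le C/\delta^2$. The evolution equations (\ref{de of nabla^i A}) for $|A|^2$ and $|\nabla A|^2$ supply the negative square terms $-2(M^2 + |A|^2)(t - t_0/2)|\nabla^2 A|^2$ and $-2(t - t_0/2)|\nabla A|^4$, which after Cauchy-Schwarz absorb all cross terms of the form $(t - t_0/2)|\nabla A|^3|A|$ and $(t - t_0/2)|\nabla A|^2|\nabla^2 A|$. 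The remaining forcing, from $|\bar\nabla^2\bar R|$ and the cutoff derivatives, is of controlled order in $\rho$ and $\delta$. The parabolic maximum principle then gives $|\nabla A|^2(x_0, t_0) \le CM^4 + CM^2/t_0 + C\rho^2/\delta^2 \le C\rho^2/t_0^2$ for $\rho \ge 1$, that is, $|\nabla A|(x_0, t_0) \le C_1 L r^{2-\epsilon}(F_0(x_0))/t_0$ as required.

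The main obstacle is handling the spatially unbounded ambient term $|\bar\nabla^2\bar R|$, which is absent in Chen-Yin's uniformly bounded setting. Its contribution to the Bernstein computation must be absorbed into the good negative terms together with a bounded remainder depending only on $\rho$; this is possible precisely because the hypothesized rate $r^{2-\epsilon}$ for $|A|^2$ matches $(r^{1-\epsilon/2})^2$, the square of the ambient curvature rate, so the forcing has the correct homogeneity. A secondary technical point is that $\delta$ can be chosen uniformly in $x_0$ thanks to smoothness and properness of $F_0$, so that the constant $C_1$ is genuinely independent of the basepoint.
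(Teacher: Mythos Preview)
Your approach is essentially the paper's: both reduce the estimate to the Bernstein argument behind Chen--Yin's Theorem~3.2, the paper by inserting the spatially varying bound $G^2 = Lr^{2-\epsilon}$ directly into Chen--Yin's evolution inequalities (their Proposition~3.1) and then invoking the same maximum-principle computation, you by first localizing on a parabolic cylinder so that the bounds become constant before running that computation. Your explicit localization is a reasonable reading of the paper's ``proceed as in the proof of theorem~3.2''.

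One justification needs correcting. The uniform choice of $\delta$ is \emph{not} a consequence of ``smoothness and properness of $F_0$'' --- a smooth proper immersion can stretch distances arbitrarily, so those hypotheses alone give nothing. What makes $\delta$ uniform is that $g_0 = F_0^*h$, so $F_0:(\Sigma, g_0)\to (M,h)$ has $|F_{0*}|=\sqrt n$ and is therefore $\sqrt n$-Lipschitz (cf.\ Lemma~\ref{lemma DF bounded}); hence for $r_0\ge 1$ any fixed $\delta\le 1/\sqrt n$ works. With $\delta$ fixed, the cutoff contribution $C\rho^2/\delta^2$ in your final line is only $O(\rho^2)$, not $O(\rho^2/t_0^2)$; it is absorbed via $t_0\le T$, so the resulting $C_1$ depends on $T$ as well. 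A smaller point: your time integration over $[t_0/2,t_0]$ bounds only $d_M\bigl(F(t,\cdot),F(t_0/2,\cdot)\bigr)$; to conclude $r(F(t,\cdot))\le 3r_0$ you must integrate $|H|\le C\sqrt{\rho/s}$ from $s=0$, which gives the drift $C\sqrt{\rho t_0}=O(r_0^{1-\epsilon/2})\ll r_0$ for $r_0$ large.
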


\begin{proof}[Sketch of proof]
From proposition 3.1 in \cite{ChenYin}, we have
\begin{align*}
(\partial_t - \Delta) |\nabla^2 F|^2 &\le - |\nabla^3 F|^2 + C_1 |\nabla ^2 F|^4+ C_1, \\
(\partial_t - \Delta) |\nabla^3 F|^2 &\le - |\nabla^4 F|^2 + C(C_1 + |\nabla^2 F|^2 + |\bar\nabla^2 \bar R|^2) |\nabla ^3 F|^2 \\
&\quad + C|\nabla^3 F|^3 + C_0 |\nabla^2 F|^2 + C_0,
\end{align*}
Write $G^2 = Lr^{2-\epsilon}$. Using the conditions on $|\bar\nabla^2 \bar R|$ and $|A|$, we have
\begin{align*}
(\partial_t - \Delta) |\nabla^2 F|^2 &\le - |\nabla^3 F|^2 + C_1G^4 t^{-2} , \\
(\partial_t - \Delta) |\nabla^3 F|^2 &\le - |\nabla^4 F|^2 + C_1 |\nabla ^3 F|^3 + C_1 G^6t^{-3}.
\end{align*}
Thus one can proceed as in the proof of theorem 3.2 in \cite{ChenYin} to conclude.
\end{proof}






\section{Writing $\widetilde F$ as a graph of $F$: Basic estimates}
In this section, we represent $\widetilde F$ as a graph of $F$ and provide some basic estimates. Let
$$F, \widetilde F : [0,T] \times \Sigma \to M$$
be two families of immersions so that $F(0,\cdot) = \widetilde F(0, \cdot)$. Let $d: [0,T] \times \Sigma\to \mathbb R$ be the pointwise distance between $F$ and $\widetilde F$. That is,
\begin{equation*}
d = d(t, x) :=d_{M} (F(t, x), \widetilde F(t, x)).
\end{equation*}
For each $(t,x)$, write $p= F(t,x)$ and $\tilde p = \widetilde F(t, x)$. We assume that
\begin{equation} \label{assumption on d}
d <\min\left\{ i_0, 1, \frac{1}{\sqrt{2B^{loc}_0}}\right\},\ \ \ \forall (t, x) \in [0,T]\times \Sigma.
\end{equation}
Since $d<i_0$, for all $(t, x)$ there is a unique shortest geodesic joining $p$ and $\tilde p$. Write $\exp_p (sv)$, where $s\in [0,1]$, to denote this geodesic. Then $\tilde p = \exp_p v$, $|v| = d$ and $v$ is a smooth section on $N$. The collection of these geodesics forms a smooth homotopy
\begin{equation*}
\gamma : [0,T] \times \Sigma \times [0,1] \to M, \ \ \ \gamma (t, x, s) = \exp_p (sv)
\end{equation*}
{connecting $F$ and $\widetilde F$}. We use $\dot\gamma, J_t$ and $J_i$ to denote the derivative of $\gamma$ with respect to $s, t,$ and the coordinate $x^i$ respectively (the notations are so chosen since $J_t, J_i$ are Jacobi fields). Note also that $v, F_t$ and $F_i$ are the restrictions of $\dot\gamma, J_t, J_i$ to $s=0$ respectively.

Let $P : T_{\tilde p} M \to T_pM $ be the parallel transport along the geodesic $-\gamma$. The inverse $P^{-1}$ is the parallel transport along $\gamma$. 

On the endomorphism bundle $\text{End} (\widetilde N ,N)=\widetilde N^* \otimes N$ over $\Sigma$, {there is a connection induced from $\nabla^F$ and $\nabla^{\widetilde F}$. Together with the two connections $\nabla$ and $\widetilde \nabla$ defined on $T^{p,q}\Sigma$ by $g$ and $\tilde g$ respectively, there are two connections defined on any endomorphism valued $(p,q)$-tensor bundle $T^{p,q}\Sigma \otimes \text{End} (\widetilde N, N)$, which again we denote by $\nabla$ and $\widetilde \nabla$}. Note that the connections satisfy the Leibniz rule:
\begin{equation*}
\nabla (P Z) = \nabla P \cdot Z + P (\nabla Z) , \ \ \ Z \in \Gamma(\Sigma , T^{p,q} \Sigma \otimes \End (\widetilde N,N)).
\end{equation*}

In the following, we will derive estimates for $v$ and $d$. Since the calculations might be useful for other geometric situations, we do not assume that $F, \widetilde F$ satisfy the MCF equation except for Theorem \ref{D_t^n v = 0 at t=0} in this section. We remark that all of the estimates follow from the Jacobi field equation (and its higher order derivatives).

First we prove a useful lemma.

\begin{lem} \label{Lemma bounding Jacobi fields}
Let $p=p (\tau), \tilde p=\tilde p(\tau)$ be two curves in $M$ so that
$$0<d=d_M (p,\tilde p )< \min\{ i_0, 1/\sqrt{2B^{loc}_0}\}$$
for all $\tau$. Let $\gamma_\tau(s) = \exp_{p(\tau)} (sv_\tau)$, where $s\in [0,1]$, be the unique geodesic joining $p(\tau)$ to $\tilde p(\tau)$. Let $J$ be the Jacobi field given by the variation of geodesics $\gamma_\tau$. Then
\begin{align}
\label{bounding J} |J| &\le |\partial_{\tau }p| + C^{loc}_0 |\partial_{\tau} p|  d^2  +  2 | P\partial _{\tau} \tilde  p- \partial_{\tau} p|, \\
\label{bounding DJ} |\bar\nabla_{\dot\gamma} J| &\le C^{loc}_0 |\partial_{\tau} p| d^2  +  2| P\partial _{\tau} \tilde  p- \partial_{\tau} p| .
\end{align}
\end{lem}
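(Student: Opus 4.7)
The plan is to fix $\tau$, treat $s \mapsto J(s)$ as the solution of the Jacobi equation along the single geodesic $\gamma_\tau$, and compare $J$ to the flat-model linear interpolation of its endpoint data $J(0)=\partial_\tau p$ and $J(1)=\partial_\tau \tilde p$.

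Working in a $\bar\nabla$-parallel orthonormal frame $\{E_a\}$ along $\gamma_\tau$, the Jacobi equation $\bar\nabla_{\dot\gamma}^{2} J = -\bar R(J,\dot\gamma)\dot\gamma$ becomes a linear ODE system for the components $J^a(s)$, which together with $|\dot\gamma|=d$ and $|\bar R|\le B_0^{loc}$ yields the pointwise estimate $|\bar\nabla_{\dot\gamma}^2 J(s)| \le C\, B_0^{loc} d^{2}\, |J(s)|$. Let $L$ be the section along $\gamma_\tau$ whose components in the parallel frame are the linear interpolation $L^a(s) = (1-s)J^a(0) + s\,J^a(1)$, and set $W := J - L$. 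Under the identification given by parallel transport along $-\gamma_\tau$, $L(s)$ corresponds to $(1-s)\partial_\tau p + s\, P\partial_\tau \tilde p$, so in particular $\bar\nabla_{\dot\gamma} L$ is the constant vector corresponding to $P\partial_\tau \tilde p - \partial_\tau p$, while $W(0)=W(1)=0$ and $\bar\nabla_{\dot\gamma}^{2} W = \bar\nabla_{\dot\gamma}^{2} J$.

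Next I would write $W$ componentwise via the Dirichlet Green's function $G(s,t) = t(1-s)$ for $t\le s$ and $s(1-t)$ for $t\ge s$, i.e.
\[
W(s) = -\int_0^1 G(s,t)\, \bar\nabla_{\dot\gamma}^{2} J(t)\, dt.
\]
Since $\int_0^1 G(s,t)\,dt \le \tfrac18$ and $|\partial_s G|\le 1$, this gives $\max_s |W(s)| \le \tfrac{1}{8}\max_s |\bar\nabla_{\dot\gamma}^{2} J|$ and $\max_s |\bar\nabla_{\dot\gamma} W(s)| \le \max_s |\bar\nabla_{\dot\gamma}^{2} J|$. Inserting the Jacobi bound and $|J|\le |L|+|W|$ produces
\[
\max |W| \le \tfrac{1}{8}\,C B_0^{loc} d^{2}\bigl(\max |L| + \max|W|\bigr),
\]
and the smallness condition $B_0^{loc} d^{2}\le \tfrac12$ in (\ref{assumption on d}) lets me absorb $\max|W|$ on the right to conclude $\max|W|\le C'\, B_0^{loc} d^{2}\,\max|L|$ for a dimensional constant $C'$.

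The endpoint bound $\max|L|\le |\partial_\tau p| + |P\partial_\tau \tilde p - \partial_\tau p|$ is immediate from the triangle inequality, and $|\bar\nabla_{\dot\gamma} L|$ is identically $|P\partial_\tau \tilde p - \partial_\tau p|$. Combining $|J|\le |L|+|W|$ and $|\bar\nabla_{\dot\gamma} J|\le |\bar\nabla_{\dot\gamma} L|+|\bar\nabla_{\dot\gamma} W|$ and applying once more $B_0^{loc} d^2\le \tfrac12$ to absorb the induced $B_0^{loc} d^{2}|P\partial_\tau \tilde p - \partial_\tau p|$ term into the coefficient $2$ yields (\ref{bounding J}) and (\ref{bounding DJ}), with a constant $C_0^{loc}$ depending only on $B_0^{loc}$ and dimension. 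The main technical obstacle is purely bookkeeping: the smallness assumption (\ref{assumption on d}) is calibrated precisely so that the absorption step produces the clean coefficient $2$ rather than a larger constant, and one has to track the dimensional constant coming from $|\bar R(X,Y)Z|\le C|\bar R||X||Y||Z|$ through the estimate.
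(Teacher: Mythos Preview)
Your argument is correct, but the mechanism differs from the paper's. The paper does not introduce the linear interpolant $L$ or the Dirichlet Green's function; instead it works directly with $\bar\nabla_{\dot\gamma} J$. In the parallel frame one has $J^a(1)-J^a(0)$ equal to the components of $P\partial_\tau\tilde p-\partial_\tau p$, so by the mean value theorem there exists $\zeta\in[0,1]$ with $|\bar\nabla_{\dot\gamma} J(\zeta)|=|P\partial_\tau\tilde p-\partial_\tau p|$. A second application of the mean value theorem to each component of $\bar\nabla_{\dot\gamma} J$ then gives $|\bar\nabla_{\dot\gamma} J(s)|\le \sup|\bar\nabla_{\dot\gamma}^2 J|+|P\partial_\tau\tilde p-\partial_\tau p|$, and the Jacobi equation together with $|J|\le|\partial_\tau p|+\sup|\bar\nabla_{\dot\gamma} J|$ and $B_0^{loc}d^2<\tfrac12$ closes the loop. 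Both approaches amount to controlling a second-order ODE by its endpoint data and the sup of its second derivative; the paper's double mean value theorem is a touch more elementary, while your integral representation is more systematic and produces sharper intermediate constants (your $\tfrac18$). One small simplification: with the standard operator-norm convention, $|\bar R(J,\dot\gamma)\dot\gamma|\le B_0^{loc}d^2|J|$ holds without any dimensional factor, which is exactly what the paper uses and dissolves the bookkeeping worry you raised at the end.
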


\begin{proof}
The first inequality (\ref{bounding J}) follows from the second one using $|J| \le |\partial_\tau p| + \sup|\nabla_{\dot\gamma} J|$. To show (\ref{bounding DJ}), note that $J$ satisfies the Jacobi field equation
\begin{equation} \label{Jacobi field equation}
\bar\nabla_{\dot\gamma} \bar\nabla_{\dot\gamma} J + \bar R(\dot\gamma, J)\dot \gamma = 0.
\end{equation}
Let $\{ e_1, \cdots, e_n\}$ be a parallel orthonormal fields along $\gamma$. Write $J(s) = \sum J_i (s) e_i(s)$, then $\bar \nabla_{\dot\gamma} J = \sum J_i'(s) e_i(s)$. Also
\begin{equation}
\partial_\tau p = \sum J_i(0) e_i(0) \text{ and }P^{-1} \partial_\tau p = \sum J_i(0) e_i(1).
\end{equation}
By the mean value theorem, there is $\zeta \in [0,1]$ so that
$$| P\partial_\tau \tilde p - \partial_\tau p| = |\partial_\tau \tilde p - P^{-1} \partial_\tau p| = |\bar\nabla_{\dot\gamma} J (\zeta)|.$$
Thus by mean value theorem again, for any $s\in [0,1]$, there is $\zeta_s$ between $\zeta$ and $s$ so that
\begin{equation} \label{mean value theorem twice}
\begin{split}
|\bar\nabla_{\dot\gamma} J(s) | &\le\left| \sum (J_i'(s) - J_i'(\zeta)) e_i(s)\right| + |\bar\nabla_{\dot\gamma} J(\zeta)| \\
&\le \left|\sum J_i^{''} (\zeta_s) e_i(s)\right| +  | P\partial_\tau \tilde p - \partial_\tau p| \\
&= \left|\sum J_i^{''} (\zeta_s) e_i(\zeta_s)\right|+ | P\partial_\tau \tilde p - \partial_\tau p| \\
&= |\bar\nabla_{\dot\gamma}\bar\nabla_{\dot\gamma} J(\zeta_s)| + | P\partial_\tau \tilde p - \partial_\tau p| \\
&\le B^{loc}_0 d^2 |J(\zeta_s)| + | P\partial_\tau \tilde p - \partial_\tau p|\\
&\le B^{loc}_0 d^2 (|\partial_\tau p| + \sup |\bar\nabla_{\dot\gamma} J|) +| P\partial_\tau \tilde p - \partial_\tau p|,
\end{split}
\end{equation}
where we have used (\ref{Jacobi field equation}) and $|\bar R|\le B_0^{loc}$. Since $B^{loc}_0d^2 <1/2$ by assumption, (\ref{bounding DJ}) is shown with $C^{loc}_0 = 2B^{loc}_0$.
\end{proof}

\begin{rem}
Note that in lemma \ref{Lemma bounding Jacobi fields} we assume that $d>0$. Indeed the Jacobi field is not even defined at points where $d=0$. Thus in the MCF setting, the lemma cannot be applied directly when e.g. $t=0$. To deal with this, we first consider the case $d>0$ and conclude by letting $d\to 0$ (See the proof of proposition \ref{proposition estimate D_t v, Dv} for an example). 
\end{rem}

The first application of lemma \ref{Lemma bounding Jacobi fields} is the following proposition.

\begin{prop} \label{proposition estimate D_t v, Dv}
Let $F, \widetilde F$ be two families of immersions so that (\ref{assumption on d}) holds. Then we have
\begin{align}
\label{bound D_t v} |D_t v| &\le C^{loc}_0 |F_t|d^2 + 2|P\widetilde F_t - F_t|, \\
\label{bound Dv} |\bar\nabla v| &\le  C^{loc}_0 d^2 + 2|P\widetilde F_* - F_*|.
\end{align}
\end{prop}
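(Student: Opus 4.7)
My plan is to apply Lemma \ref{Lemma bounding Jacobi fields} pointwise with the right choice of parameter curve. The key observation that makes both inequalities fall out is the following: if $\gamma(\tau,s)=\exp_{p(\tau)}(sv_\tau)$ is a one-parameter family of geodesics joining curves $p(\tau)$ and $\tilde p(\tau)$, then the induced Jacobi field $J(s)=\partial_\tau\gamma$ satisfies $J(0)=\partial_\tau p$ and, using $[\partial_s,\partial_\tau]=0$,
$$\bar\nabla_{\dot\gamma}J\big|_{s=0}=\bar\nabla_{\partial_\tau}\partial_s\gamma\big|_{s=0}=D_\tau v.$$
Thus the content of (\ref{bounding DJ}) evaluated at $s=0$ is already a bound on $|D_\tau v|$ for any parameter $\tau$ we choose.

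For the first inequality, I fix $x\in\Sigma$ and take $\tau=t$, so that $p(\tau)=F(\tau,x)$, $\tilde p(\tau)=\widetilde F(\tau,x)$, and the corresponding Jacobi field $J_t$ satisfies $J_t(0)=F_t$ and $\bar\nabla_{\dot\gamma}J_t(0)=D_t v$. Assumption (\ref{assumption on d}) lets me invoke Lemma \ref{Lemma bounding Jacobi fields}, and (\ref{bounding DJ}) at $s=0$ yields exactly the stated estimate for $|D_t v|$.

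For the second inequality, I fix $t$ and a point $x_0\in\Sigma$, choose a $g(t)$-orthonormal frame $\{e_i\}$ at $x_0$, and apply the same construction to the pair of curves $\tau\mapsto F(t,x_0+\tau e_i)$ and $\tau\mapsto \widetilde F(t,x_0+\tau e_i)$ realized in a local chart. The induced Jacobi field $J_i$ satisfies $J_i(0)=F_*(e_i)$ and $\bar\nabla_{\dot\gamma}J_i(0)=\bar\nabla_{e_i}v$, giving
$$|\bar\nabla_{e_i}v|\le C_0^{loc}|F_*(e_i)|\,d^2+2|P\widetilde F_*(e_i)-F_*(e_i)|.$$
Since $g=F^*h$ forces $|F_*(e_i)|_h=|e_i|_g=1$, squaring, summing over $i$, and applying $(a+b)^2\le 2(a^2+b^2)$ yields $|\bar\nabla v|^2\le C_0^{loc}d^4+C|P\widetilde F_*-F_*|^2$, from which the claimed bound follows after absorbing constants, as permitted by the paper's convention on $C_0^{loc}$.

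The one genuine subtlety, and the main obstacle, is that Lemma \ref{Lemma bounding Jacobi fields} requires $d>0$ so that the connecting geodesic $\gamma$, and hence $v$, are well defined; at points where $F=\widetilde F$ neither is defined a priori. Following the remark after Lemma \ref{Lemma bounding Jacobi fields}, I would first establish both bounds on the open set $\{d>0\}$ and then pass to the closure by letting $d\to 0$: there $P\to\mathrm{id}$ and $v\to 0$, and since the right-hand sides are continuous in the smooth data $F,\widetilde F$, the estimates extend by continuity. A separate and routine verification is that, under the continuity assumptions in effect, $D_t v$ and $\bar\nabla v$ admit continuous extensions across $\{d=0\}$ so that the limit is meaningful.
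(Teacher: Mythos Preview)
Your approach is essentially identical to the paper's: identify $D_t v=\bar\nabla_{\dot\gamma}J_t|_{s=0}$ and $\bar\nabla_i v=\bar\nabla_{\dot\gamma}J_i|_{s=0}$, apply (\ref{bounding DJ}) from Lemma \ref{Lemma bounding Jacobi fields}, then handle $\{d=0\}$ by a limiting argument. One small point: the paper already records that $v$ is a \emph{smooth} section of $N$ (since $d<i_0$ makes $v=\exp_p^{-1}(\tilde p)$ smooth in $(p,\tilde p)$), so $D_t v$ and $\bar\nabla v$ are smooth and no separate ``continuous extension'' verification is needed; also, your squaring-and-summing step yields a dimensional constant in front of $|P\widetilde F_*-F_*|$ rather than the stated $2$, but this constant is never used with its precise value elsewhere in the paper.
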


\begin{proof}
Note that when $d> 0$, we have
\begin{align*}
D_t v &= \bar\nabla_{F_t} \dot\gamma \big|_{s=0} = \bar\nabla_{\dot\gamma} J_t \big|_{s=0},\\
\bar\nabla_i v &= \bar\nabla_{F_i} \dot\gamma \big|_{s=0} = \bar\nabla_{\dot\gamma} J_i \big|_{s=0}
\end{align*}
Thus (\ref{bound D_t v}), (\ref{bound Dv}) follows from lemma \ref{Lemma bounding Jacobi fields} when $d>0$. Assume $d=0$ at some $(t,x)$. Let $(t_i,x_i)$ be a sequence so that $(t_i, x_i)\to (t,x)$ and $d(t_i,x_i) >0$. Since $D_tv$, $\nabla v$ are smooth, the two inequalities can be shown by taking $i\to \infty$. If such a sequence does not exists, then $v$ is identically zero in a space time neighbourhood and so $D_tv = \nabla v = 0$.
\end{proof}

Since $|v| = d$, proposition \ref{proposition estimate D_t v, Dv} gives an estimate for $\partial_t d^2$. We also use the following lemma which can also be proved easily (See Lemma 2.2 in \cite{SongWang}).

\begin{lem} \label{lemma derivative of d}
We have
\begin{align}
\label{derivative of d} \partial _t d^2  &= 2h ( P \widetilde F_t - F_t,v),\\
\label{D of d} |\nabla d| &\le |P\widetilde F_* - F_*|.
\end{align}
\end{lem}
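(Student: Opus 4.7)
The idea is to exploit the first variation of energy formula applied to the family of minimizing geodesics $\gamma_{t,x}(s) = \exp_p(sv)$, since by construction $d^2 = |v|^2 = |\dot\gamma|^2$ equals the energy of $\gamma$ (the speed is constant along a geodesic). The whole lemma is essentially an endpoint computation, the main bookkeeping point being the appearance of $P$.

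For the first identity, fix $x$ and regard $t \mapsto \gamma_{t,x}$ as a one-parameter family of geodesics with varying endpoints $p(t) = F(t,x)$, $\tilde p(t) = \widetilde F(t,x)$, so the variation field is exactly $J_t$ with $J_t(0) = F_t$ and $J_t(1) = \widetilde F_t$. The first variation of energy, using that the interior piece vanishes because $\gamma_{t,x}$ is a geodesic, yields
\[
\tfrac{1}{2}\partial_t d^2 \;=\; h\bigl(\dot\gamma(1), \widetilde F_t\bigr) \;-\; h\bigl(\dot\gamma(0), F_t\bigr).
\]
Now $\dot\gamma(0) = v$, and $\dot\gamma(1)$ is the parallel transport of $v$ along $\gamma$ from $s=0$ to $s=1$, which is exactly $P^{-1} v$. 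Since $P$ is an $h$-isometry, $h(P^{-1}v, \widetilde F_t) = h(v, P\widetilde F_t)$, and the two terms combine to give $\partial_t d^2 = 2 h(P\widetilde F_t - F_t, v)$, as desired.

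For the gradient estimate, the same computation with the coordinate variation $x^i$ in place of $t$ gives $\partial_i (d^2) = 2 h(P\widetilde F_i - F_i, v)$. On the open set $\{d>0\}$ one has $\nabla d = \nabla(d^2)/(2d)$, and the tensor Cauchy–Schwarz inequality applied to the 1-form $\nabla(d^2)_i = 2h(v, (P\widetilde F_* - F_*)_i)$ yields
\[
|\nabla(d^2)|_g \;\le\; 2\,|v|_h\,|P\widetilde F_* - F_*|_{g^{-1}\otimes h} \;=\; 2 d\,|P\widetilde F_* - F_*|,
\]
so $|\nabla d| \le |P\widetilde F_* - F_*|$ on $\{d>0\}$. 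Points where $d=0$ are handled exactly as in the proof of Proposition 3.1: at such a point $d$ attains a minimum so one may either argue by approximation along a sequence with $d>0$ or observe that $d$ vanishes identically on a neighbourhood, in which case both sides are zero.

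The only mild obstacle is getting the role of $P$ right: it is the interplay $\dot\gamma(1) = P^{-1} v$ combined with the isometry property of $P$ that produces the combination $P\widetilde F_t - F_t$ rather than $\widetilde F_t - P^{-1} F_t$; once this is pinned down the rest is a direct first-variation calculation and Cauchy–Schwarz.
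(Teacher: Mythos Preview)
Your proof is correct and is exactly the standard first-variation-of-energy argument; the paper itself does not spell out a proof but simply refers to Lemma 2.2 in \cite{SongWang}, whose content is precisely this computation. The only small omission is that for \eqref{derivative of d} you should note (as you did for \eqref{D of d}) that the variation argument presupposes $d>0$, while at points with $d=0$ both sides vanish since $v=0$ there; this is harmless because both sides are continuous.
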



Next we need the following generalization of lemma \ref{Lemma bounding Jacobi fields}. First we need a definition.

\begin{dfn}
We say that a polynomial $Q$ of $k$-variables is universal if it has non-negative coefficients, $Q(0)=0$ and the coefficients depend only on the dimensions of $\Sigma$ and $M$. 
\end{dfn}

\begin{prop} \label{proposition estimate D_v D_J^k J}
Assume the same notations as in lemma \ref{Lemma bounding Jacobi fields}. Then for any $k=0,1,2,\cdots$,
\begin{equation} \label{estimate D_v D_J^k J}
\begin{split}
|\bar\nabla_{\dot\gamma} \bar\nabla^k_{J} J| &\le C^{loc}_{k}d^2 P^1_k\big(|\partial_\tau p|, |D_\tau \partial_\tau p|, \cdots , |D_\tau^{k} \partial_\tau p|\big) \\
&\quad + Q^1_k\big( |P\partial_\tau \tilde p - \partial_\tau p|, |P D_\tau\partial_\tau \tilde p - D_\tau \partial_\tau p|, \cdots, |P D_\tau^{k} \partial_\tau \tilde p - D_\tau^{k} \partial_\tau p|\big),
\end{split}
\end{equation}
where $P^1_k, Q^1_k$ are universal polynomials in $(k+1)$-variables.
\end{prop}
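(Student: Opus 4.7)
The plan is to prove the proposition by induction on $k$. The base case $k=0$ is precisely (\ref{bounding DJ}) from lemma \ref{Lemma bounding Jacobi fields}, with $P^1_0(x)=x$ and $Q^1_0(y)=2y$. For the inductive step I set $Y_k := \bar\nabla_J^k J$, derive a generalized Jacobi equation for $Y_k$, and repeat the two-step mean-value argument that produced (\ref{mean value theorem twice}).

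To derive the equation, I use that $J=\partial_\tau \gamma$ and $\dot\gamma=\partial_s \gamma$ arise as coordinate vector fields of the two-parameter map $\gamma(\tau,s)$, so $[J,\dot\gamma]=0$, and hence
\begin{equation*}
[\bar\nabla_J, \bar\nabla_{\dot\gamma}] Z = \bar R(J,\dot\gamma) Z
\end{equation*}
for any section $Z$ along $\gamma$. Differentiating the Jacobi equation (\ref{Jacobi field equation}) $k$ times by $\bar\nabla_J$, commuting $\bar\nabla_J$ past $\bar\nabla_{\dot\gamma}$ at each stage via this identity and applying the Leibniz rule to the curvature factor, shows that $Y_k$ satisfies
\begin{equation*}
\bar\nabla_{\dot\gamma}^2 Y_k + \bar R(\dot\gamma, Y_k)\dot\gamma = F_k,
\end{equation*}
where $F_k$ is a universal polynomial expression in $\bar R,\bar\nabla \bar R,\ldots,\bar\nabla^{k-1}\bar R$, $\dot\gamma$, and $Y_0,\ldots,Y_{k-1}$ together with their $\bar\nabla_{\dot\gamma}$-derivatives. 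Every monomial of $F_k$ carries at least two factors of $\dot\gamma$, since curvature always enters either through $\bar R(\cdot,\cdot)\dot\gamma$ or through the commutator $\bar R(J,\dot\gamma)\,\cdot$, and each such factor contributes $|\dot\gamma|=d$.

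Now I repeat the argument that produced (\ref{mean value theorem twice}). Since $\gamma(\tau,0)=p(\tau)$ and $\gamma(\tau,1)=\tilde p(\tau)$, we have $J|_{s=0}=\partial_\tau p$ and $J|_{s=1}=\partial_\tau \tilde p$, and $\bar\nabla_J$ restricts at $s=0,1$ to the intrinsic covariant derivative $D_\tau$ along the endpoint curves; consequently
\begin{equation*}
Y_k|_{s=0} = D_\tau^k \partial_\tau p, \qquad Y_k|_{s=1} = D_\tau^k \partial_\tau \tilde p.
\end{equation*}
Two applications of the mean value theorem --- first locating $\zeta$ with $|\bar\nabla_{\dot\gamma} Y_k(\zeta)|=|PY_k(1)-Y_k(0)|$, then writing the remainder in terms of $\bar\nabla_{\dot\gamma}^2 Y_k(\zeta_s)$ --- give
\begin{equation*}
|\bar\nabla_{\dot\gamma} Y_k(s)| \le \sup_{[0,1]}|\bar\nabla_{\dot\gamma}^2 Y_k| + |P D_\tau^k \partial_\tau \tilde p - D_\tau^k \partial_\tau p|.
\end{equation*}
The equation for $Y_k$ replaces $\bar\nabla_{\dot\gamma}^2 Y_k$ by $-\bar R(\dot\gamma,Y_k)\dot\gamma+F_k$: the curvature term is bounded by $B_0^{loc} d^2 |Y_k|$, which via $|Y_k|\le |D_\tau^k \partial_\tau p|+\sup|\bar\nabla_{\dot\gamma} Y_k|$ is absorbed using $B_0^{loc} d^2<1/2$, while $F_k$ is bounded by $C_k^{loc} d^2$ times a universal polynomial in $|Y_j|$ and $|\bar\nabla_{\dot\gamma} Y_j|$ for $j<k$. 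The induction hypothesis rewrites the latter polynomial in the required form.

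The main obstacle will be the combinatorial bookkeeping of $F_k$: verifying that every monomial really carries the two $\dot\gamma$-factors needed for the $d^2$ weight in the statement, and then checking, after substituting the inductive estimates for $Y_0,\ldots,Y_{k-1}$, that the polynomial in the intrinsic norms $|D_\tau^j \partial_\tau p|$ stays collected under $C_k^{loc} d^2$ while the polynomial in the differences $|PD_\tau^j \partial_\tau \tilde p - D_\tau^j \partial_\tau p|$ remains free of any $d^2$ weight. Since every step is done with absolute values and each monomial contains at least one Jacobi-field factor, the resulting $P_k^1$ and $Q_k^1$ will automatically have non-negative coefficients and vanish at the origin, so the universal polynomial structure claimed in the proposition is preserved by the induction.
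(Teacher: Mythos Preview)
Your overall strategy---induction on $k$, deriving an inhomogeneous Jacobi equation for $Y_k=\bar\nabla_J^kJ$, the two-step mean value argument, and absorbing via $B_0^{loc}d^2<1/2$---is exactly the paper's approach, and the proof goes through along these lines.

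There is, however, one genuine inaccuracy in your bookkeeping. You claim that every monomial of $F_k$ carries at least two explicit factors of $\dot\gamma$, and hence $|F_k|\le C_k^{loc}d^2\cdot(\text{poly in }|Y_j|,|\bar\nabla_{\dot\gamma}Y_j|)$. This is false already for $k=1$: the commutator produces terms such as $\bar R(J,\dot\gamma)\bar\nabla_{\dot\gamma}J$, which has only one explicit $\dot\gamma$; the second ``$\dot\gamma$'' has been converted, via $\bar\nabla_J\dot\gamma=\bar\nabla_{\dot\gamma}J$, into a factor $\bar\nabla_{\dot\gamma}Y_0$. The correct structural statement (which the paper records precisely) is that in each monomial the number of explicit $\dot\gamma$'s plus the number of $\bar\nabla_{\dot\gamma}Y_{l}$-factors equals exactly $2$. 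Thus $|F_k|$ is bounded by a sum of terms of the form $C_k^{loc}\,d^{\,j}\cdot(\text{poly in }|Y_{k_p}|)\cdot\prod_q|\bar\nabla_{\dot\gamma}Y_{l_q}|^{l_q'}$ with $j+\sum_q l_q'=2$, not by $d^2$ times a polynomial.

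This matters for the separation you describe at the end. Substituting the induction hypothesis $|\bar\nabla_{\dot\gamma}Y_l|\le C_l^{loc}d^2P^1_l+Q^1_l$ into, say, a term $d\cdot|Y_0|\cdot|\bar\nabla_{\dot\gamma}Y_0|$ produces a cross term $d\,|\partial_\tau p|\,Q^1_0$, which has only one power of $d$ and mixes a $P$-variable with a $Q$-variable; it fits neither into $C_k^{loc}d^2P^1_k$ nor into $Q^1_k$ as written. The paper handles exactly these cross terms by Cauchy--Schwarz, e.g.\ $d\,|\partial_\tau p|\,Q^1_0\le \tfrac12\big(d^2|\partial_\tau p|^2+(Q^1_0)^2\big)$, which then splits cleanly. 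Once you replace your ``two $\dot\gamma$'s'' claim by the correct index constraint and insert this Cauchy--Schwarz step, your argument coincides with the paper's.
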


\begin{proof}
We argue by induction. The case $k=1$ is shown using (\ref{bounding DJ}) with
$$P^1_1(x_0) = x_0, \ \ \ Q^1_1(x_0) = 2x_0.$$

Assume that (\ref{estimate D_v D_J^k J}) holds for all integers strictly smaller than $k$ . First we see that
\begin{align} \label{higher order Jacobi field equation}
\bar\nabla_{\dot\gamma} \bar\nabla_{\dot\gamma} \nabla_J^k J= - \bar R(\dot\gamma, \bar\nabla_J^k J) \dot\gamma  +\sum (\bar\nabla^{i_r} \bar R)^{i_r'} * (\dot\gamma)^j * (\bar\nabla_J^{k_p} J)^{k_p'} * (\bar\nabla_{\dot\gamma} \bar\nabla_J^{l_q}J)^{l_q'}  ,
\end{align}
with $i_r\le k$ for all $r$ and
$$j + \sum_q l_q' = 2, \ \ \ \sum_p (k_p +1)k_p' + \sum_q (l_q+1)l_q' = k.$$
 That is, each term has exactly two $\dot\gamma$'s and $k$- $J$'s. When $k=0$, (\ref{higher order Jacobi field equation}) reduces to the Jacobi field equation (\ref{Jacobi field equation}) and the conditions on indices are satisfied trivially. In general, (\ref{higher order Jacobi field equation}) can be proved by induction, using again the following consequences of commuting covariant derivatives (note that $[J, \dot\gamma]=0$):
\begin{equation*}
\begin{split}
 \bar\nabla_J \bar\nabla_{\dot\gamma} \bar\nabla_{\dot\gamma} \nabla_J^k J &= \bar\nabla_{\dot\gamma}\bar\nabla_{\dot\gamma} \bar\nabla_J^{k+1} J + \bar\nabla_{\dot\gamma} (\bar R(\dot\gamma, J) \bar\nabla_J^k J) + \bar R(\dot\gamma, J)\bar \nabla_{\dot\gamma} \bar\nabla_J^k J, \\
 \bar\nabla_J \bar \nabla_{\dot\gamma} \bar\nabla_J^{l_q} J &= \bar\nabla_{\dot\gamma} \bar\nabla_J^{l_q +1} J + \bar R(\dot\gamma, J)\bar \nabla_J^{l_q}J.
 \end{split}
 \end{equation*}
Using (\ref{higher order Jacobi field equation}), $|\dot\gamma| = d$ and $|\bar\nabla_J^{k_p} J| \le |D_\tau^{k_p} \partial_\tau p| + \sup |\bar\nabla_{\dot\gamma} \bar\nabla_J^{k_p}J|$, we have
\begin{align*}
|\bar\nabla_{\dot\gamma}\bar\nabla_{\dot\gamma} \bar\nabla_J^k J| &\le C^{loc}_{k}\sum d^j (|D_\tau^{k_p} \partial_\tau p| + \sup |\bar\nabla_{\dot\gamma} \bar\nabla_J^{k_p}J|)^{k'_p} |\bar\nabla_{\dot\gamma} \bar\nabla_J^{l_q} J|^{l_q'}\\
&\quad + B_0d^2 |\bar\nabla_J^k J|,
\end{align*}
Using the induction hypothesis (note $k_q, l_q \le k-1$) and Cauchy Schwarz inequality, one easily obtains
\begin{equation} \label{induction step involving hat P and hat Q}
\begin{split}
|\bar\nabla_{\dot\gamma}\bar\nabla_{\dot\gamma} \bar\nabla_J^k J| &\le C^{loc}_{k} d^2 \widehat P_{k-1}(|\partial_\tau p| ,\cdots , |D_\tau^{k-1} \partial_\tau p|) \\
&\quad + \widehat Q_{k-1}(|P\partial_\tau \tilde p - \partial_\tau p|, \cdots ,| PD_\tau^{k-1} \partial_\tau \tilde p - D_\tau^{k-1} \partial_\tau p|) + B_0d^2 |\bar\nabla_J^k J|,
\end{split}
\end{equation}
where $\widehat P_{k-1}, \widehat Q_{k-1}$ are universal polynomials in $k$-variables. Now, as in the proof of Lemma \ref{Lemma bounding Jacobi fields}, we use mean value theorem twice to obtain
$$|\bar\nabla_{\dot\gamma} \bar\nabla_J^k J| \le \sup |\bar\nabla_{\dot\gamma}\bar\nabla_{\dot\gamma} \bar\nabla_J^k J| + |P D_\tau^k \partial_\tau \tilde p - D_\tau^k \partial_\tau p|$$
Finally we use (\ref{induction step involving hat P and hat Q}), $|\bar\nabla_J^k J| \le |D_\tau ^k \partial_\tau p| + \sup|\bar \nabla_{\dot\gamma}\bar \nabla_J^k J |$ and $B^{loc}_0d^2 <1/2$ to finish the induction step, with
\begin{align*}
P^1_k(x_0, \cdots, x_k) &= \widehat P_{k-1}(x_0, \cdots, x_{k-1}) + x_k, \\
Q^1_k(x_0, \cdots, x_k) &= \widehat Q_{k-1}(x_0, \cdots, x_{k-1}) + 2 x_k.
\end{align*}
This finishes the proof of the proposition.
\end{proof}

\begin{prop}  \label{proposition bounding nabla_J^k dot gamma}
Assume the same notations as in lemma \ref{Lemma bounding Jacobi fields}. Then
\begin{align} \label{bounding nabla_J^k dot gamma}
|\bar\nabla_{J}^k \dot\gamma| &\le C^{loc}_{k-1} d  P^2_{k-1} (|\partial_\tau p|, |D_\tau \partial_\tau p| , \cdots,|D_\tau^{k-1} \partial_\tau p|)  \\
&\quad +  Q^2_{k-1}(| P\partial_\tau \tilde p - \partial_\tau p|, |PD_\tau \partial_\tau \tilde p - D_\tau \partial_\tau p| , \cdots, |PD_\tau^{k-1}\partial_\tau \tilde p - D_\tau^{k-1} \partial_\tau p|), \nonumber
\end{align}
where $P^2_{k-1}, Q^2_{k-1}$ are universal polynomials of $k$-variables.
\end{prop}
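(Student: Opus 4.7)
The plan is to induct on $k \geq 1$, with Proposition \ref{proposition estimate D_v D_J^k J} as the main input. For the base case $k=1$, since $[J,\dot\gamma]=0$ we have $\bar\nabla_J \dot\gamma = \bar\nabla_{\dot\gamma} J$, and (\ref{bounding DJ}) gives
$$|\bar\nabla_J \dot\gamma| \le C^{loc}_0 d^2 |\partial_\tau p| + 2|P\partial_\tau \tilde p - \partial_\tau p|.$$
Using $d<1$ so that $d^2 \le d$, this has the desired form with $P^2_0(x_0) = x_0$ and $Q^2_0(x_0) = 2x_0$.

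For the inductive step, I would start from $\bar\nabla_J^k \dot\gamma = \bar\nabla_J^{k-1}(\bar\nabla_{\dot\gamma} J)$ and commute the $\bar\nabla_J$'s past $\bar\nabla_{\dot\gamma}$ via $\bar\nabla_J \bar\nabla_{\dot\gamma} X = \bar\nabla_{\dot\gamma} \bar\nabla_J X + \bar R(J,\dot\gamma) X$, which applies since $[J,\dot\gamma]=0$. After $k-1$ such commutations one obtains, in the same schematic form as (\ref{higher order Jacobi field equation}),
$$\bar\nabla_J^k \dot\gamma = \bar\nabla_{\dot\gamma} \bar\nabla_J^{k-1} J \;+\; \sum (\bar\nabla^{i_r}\bar R)^{i_r'} * J^a * \dot\gamma^b * (\bar\nabla_J^{k_p} J)^{k_p'},$$
where each term in the sum has $b \ge 1$ (the $\dot\gamma$ produced by the commutator), $i_r \le k-2$, and the total $J$-count equals $k-1$. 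The leading piece $\bar\nabla_{\dot\gamma} \bar\nabla_J^{k-1} J$ is bounded directly by (\ref{estimate D_v D_J^k J}) applied at order $k-1$, contributing $C_{k-1}^{loc} d^2 P^1_{k-1} + Q^1_{k-1}$ which (again using $d^2 \le d$) fits into the desired structure. Each correction term carries at least one factor $|\dot\gamma|=d$; the factors $|\bar\nabla_J^{k_p} J|$ with $k_p \le k-2$ are handled via the triangle inequality $|\bar\nabla_J^{j}J| \le |D_\tau^j \partial_\tau p| + \sup_s|\bar\nabla_{\dot\gamma}\bar\nabla_J^j J|$ and then by Proposition \ref{proposition estimate D_v D_J^k J} at order $j \le k-2$, so that each correction term is ultimately a polynomial (with universal coefficients depending only on dimensions and on bounds for $\bar\nabla^i \bar R$, $i \le k-2$) in the quantities $|D_\tau^i \partial_\tau p|$ and $|PD_\tau^i \partial_\tau \tilde p - D_\tau^i \partial_\tau p|$, for $0 \le i \le k-1$, multiplied by at least one factor of $d$.

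The main obstacle is organizing this mixed polynomial into the clean split $C^{loc}_{k-1} d\cdot P^2_{k-1}(\ldots) + Q^2_{k-1}(\ldots)$, since the correction terms a priori produce products that mix $|D_\tau^i \partial_\tau p|$-type and $|PD_\tau^i \partial_\tau \tilde p - D_\tau^i \partial_\tau p|$-type quantities. To handle this, I would apply Young's inequality to every mixed monomial, splitting it into two pure monomials: the pure $|D_\tau^i \partial_\tau p|$ piece is absorbed into $P^2_{k-1}$ (keeping its surviving factor of $d$ outside), and the pure $|PD_\tau^i \partial_\tau \tilde p - D_\tau^i \partial_\tau p|$ piece is absorbed into $Q^2_{k-1}$ (where, since $d<1$, the outside $d$ can simply be dropped). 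That the resulting polynomials are universal and have non-negative coefficients with $P^2_{k-1}(0)=Q^2_{k-1}(0)=0$ follows from the corresponding properties of $P^1_{k-1}, Q^1_{k-1}$ and from the combinatorial, curvature, and metric constants depending only on $\dim \Sigma$, $\dim M$, and the $B_j^{loc}$. This completes the induction.
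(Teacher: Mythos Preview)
Your overall approach coincides with the paper's: write
\[
\bar\nabla_J^k\dot\gamma \;=\; \bar\nabla_{\dot\gamma}\bar\nabla_J^{k-1}J \;+\; \sum_{i=0}^{k-2}\bar\nabla_J^{\,i}\bigl(\bar R(\dot\gamma,J)\,\bar\nabla_J^{\,k-2-i}J\bigr)
\]
by iterated commutation, bound the leading piece directly by Proposition~\ref{proposition estimate D_v D_J^k J} at level $k-1$, and handle the correction terms via $|\bar\nabla_J^{k_p}J|\le |D_\tau^{k_p}\partial_\tau p|+\sup_s|\bar\nabla_{\dot\gamma}\bar\nabla_J^{k_p}J|$ together with Proposition~\ref{proposition estimate D_v D_J^k J} at lower levels. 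The paper does not phrase this as an induction on $k$, and in fact your inductive hypothesis is never invoked, so the inductive framing is superfluous.

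There is one genuine inaccuracy in your schematic expansion. You assert that every correction term carries an explicit factor $\dot\gamma^b$ with $b\ge 1$, hence a factor $|\dot\gamma|=d$. This is false: when one of the outer $\bar\nabla_J$'s from $\bar\nabla_J^{\,i}$ lands on the $\dot\gamma$-slot of $\bar R(J,\dot\gamma)(\cdot)$ you get $\bar\nabla_J\dot\gamma=\bar\nabla_{\dot\gamma}J$, and iterating this replaces the explicit $\dot\gamma$ by a factor of the form $\bar\nabla_{\dot\gamma}\bar\nabla_J^{\,l}J$ for some $l\ge 0$. The paper records this correctly as the constraint $j+l'=1$, where $j$ counts genuine $\dot\gamma$-factors and $l'$ counts factors $\bar\nabla_{\dot\gamma}\bar\nabla_J^{\,l}J$. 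Those latter factors are again controlled by Proposition~\ref{proposition estimate D_v D_J^k J} (giving $C^{loc}_{l}d^2P^1_l+Q^1_l$), so after this correction your bookkeeping and the subsequent Cauchy--Schwarz/Young splitting go through exactly as in the paper.
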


\begin{proof}
By commuting covariant derivatives, we have
\begin{align*}
\bar\nabla_{J}^k \dot\gamma &= \bar\nabla_{\dot\gamma} \bar\nabla^{k-1}_{J} J +\sum_{i=0}^{k-2} \bar\nabla_J^ i (\bar R (\dot\gamma, J) \bar\nabla_{J}^{k-2-i} J)\\
&= \bar\nabla_{\dot\gamma} \bar\nabla^{k-1}_{J} J + \sum (\bar\nabla^{i_r} \bar R)^{i_r'} * (\dot\gamma)^j * (\bar \nabla_{J}^{k_p} J)^{k_p'} * \big(\bar\nabla_{\dot\gamma} \bar\nabla _{J}^{l} {J}\big)^{l'},
\end{align*}
where $l , k_p\le k-3$, $i_r \le k-2$ for all $r$,
$$ j + l'= 1 \text{ and } \sum_p (k_p+1) k_p' +  (l +1) l' = k.$$
By proposition \ref{proposition estimate D_v D_J^k J} and $|\bar \nabla_J^{k_p} J| \le |D_\tau^{k_p} \partial_\tau p| + \sup |\bar \nabla_{\dot\gamma} \bar\nabla_J^{k_p} J|$, one obtains (\ref{bounding nabla_J^k dot gamma}).
\end{proof}

An immediate consequence is the following theorem, which says that if $F$ and $\widetilde F$ are both MCF starting at $F_0$, then they agree infinitesimally.

\begin{thm}\label{D_t^n v = 0 at t=0}
Let $F, \widetilde F$ be smooth MCFs starting at $F_0$. Then $D_t^k v |_{t=0} = 0$ for all $k=0,1,2,\cdots$.
\end{thm}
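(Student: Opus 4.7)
The plan is to prove the claim by induction on $k$. The base case $k=0$ is immediate: since $F(0,\cdot)=\widetilde F(0,\cdot)=F_0$, we have $d(0,x)\equiv 0$, so $v(0,x)\equiv 0$.

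For the inductive step, suppose $D_t^j v|_{t=0}=0$ for every $0\le j<k$. I would first record two preparatory observations. First, the MCF equation $F_t=H$ together with the evolution identities in Lemma \ref{lemma de of some quantities} and equation (\ref{de of nabla^i A}) lets one compute $D_t^j F_t$ recursively at any time as a universal polynomial expression in $F_*$, $A$, $\nabla^\ell A$, and $\bar\nabla^\ell \bar R$ evaluated at the current time. Since $F$ and $\widetilde F$ share the same initial datum $F_0$, these ingredients coincide at $t=0$, and hence $D_t^j F_t|_{t=0}=D_t^j \widetilde F_t|_{t=0}$ for every $j\ge 0$. Second, $d(0,x)=0$ forces the geodesic $\gamma$ to degenerate to a point at $t=0$, so $P$ reduces to the identity on $T_{F_0(x)}M$. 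Combining the two observations gives
\begin{equation*}
|P D_t^j \widetilde F_t - D_t^j F_t|_{t=0} = 0 \quad \text{for every } j\ge 0.
\end{equation*}

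Next I would identify $D_t^k v$ with the quantity controlled by Proposition \ref{proposition bounding nabla_J^k dot gamma}. Because $v=\dot\gamma|_{s=0}$, $J_t|_{s=0}=F_t$, and the pullback connection on $\gamma^*TM$ agrees with the one on $F^{-1}TM$ along $\{s=0\}$, one verifies inductively that
\begin{equation*}
D_t^k v = \bar\nabla_{J_t}^k \dot\gamma \,\big|_{s=0}.
\end{equation*}
Applying Proposition \ref{proposition bounding nabla_J^k dot gamma} with $\tau=t$, $p(t)=F(t,x)$, $\tilde p(t)=\widetilde F(t,x)$ and evaluating the resulting inequality at $t=0$ (and $s=0$), the first term on the right vanishes because of the explicit factor $d(0,x)=0$, while the second term vanishes because the universal polynomial $Q^2_{k-1}$ has no constant term and each of its arguments is zero by the preceding paragraph. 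Hence $D_t^k v|_{t=0}=0$, completing the induction.

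The main obstacle, and the only step requiring real care, will be the identification $D_t^k v = \bar\nabla_{J_t}^k \dot\gamma\,|_{s=0}$. For $k=1$ this is essentially the definition of $D_tv$; for higher $k$ one must check that iterated $J_t$-covariant differentiation of $\dot\gamma$ on $\gamma$ upstairs and iterated $D_t$-differentiation of its restriction to $\{s=0\}$ produce the same section of $F^{-1}TM$. This will follow from $[\partial_t,\partial_s]=0$ on the homotopy $\gamma$ and the compatibility of $\bar\nabla$ with $h$, so the point is bookkeeping rather than substance; once it is in place, the theorem really is an immediate consequence of Proposition \ref{proposition bounding nabla_J^k dot gamma}.
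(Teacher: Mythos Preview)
Your argument is essentially the paper's own, with one real technical gap that you have misidentified. You worry about verifying the identity $D_t^k v = \bar\nabla_{J_t}^k \dot\gamma\,|_{s=0}$, but the genuine issue is that this identity, and Proposition \ref{proposition bounding nabla_J^k dot gamma} itself, are only available where $d>0$: the geodesic $\gamma$, the Jacobi field $J_t$, and hence the whole right-hand side are undefined when $p=\tilde p$ (see the Remark after Lemma \ref{Lemma bounding Jacobi fields}). At $t=0$ we have $d\equiv 0$, so you cannot ``evaluate the resulting inequality at $t=0$'' as you propose.

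The fix is the approximation argument the paper uses (and already used once in Proposition \ref{proposition estimate D_t v, Dv}): fix $x$, and either $v\equiv 0$ on a space-time neighbourhood of $(0,x)$, in which case $D_t^k v(0,x)=0$ trivially, or there is a sequence $(\tau_i,x_i)\to(0,x)$ with $d(\tau_i,x_i)>0$. At those points the identification and Proposition \ref{proposition bounding nabla_J^k dot gamma} are legitimate, giving the bound you wrote; then one lets $i\to\infty$ using smoothness of $D_t^k v$, continuity of $P$ (with $P\to\operatorname{Id}$), and your correct observation that $D_t^m H$ and $D_t^m\widetilde H$ are universal expressions in spatial data which coincide at $t=0$. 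Two minor remarks: your induction hypothesis is never used and can be dropped, and your observation about $P=\operatorname{Id}$ ``because $\gamma$ degenerates'' is again only meaningful in the limit along such a sequence, not literally at $t=0$.
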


\begin{proof}
{We use an approximation argument as in the proof of Proposition \ref{proposition estimate D_t v, Dv}. Let $x\in \Sigma$. If $v$ is identically zero in a space time neighbourhood of $(0, x)$, then $D^k_t v (x)=0$ is clear. If not, then there are $(\tau_i, x_i)\to (0,x)$ so that $v(\tau_i, x_i)\neq 0$. For each $(\tau_i, x_i)$, }
In general, whenever $v\neq 0$ at some $(t, x)$, write $p(\tau) = F(\tau, x)$ and $\tilde p(\tau) = \widetilde F(\tau,x)$, then
$$ D_t^k v = \bar\nabla_J^k \dot\gamma |_{s=0},$$
where $J$ is the variational vector field of the family of geodesics joining $p$ to $\tilde p$. From proposition \ref{proposition bounding nabla_J^k dot gamma}, we have
\begin{align*}
|D_t^k v| &\le C^{loc}_{k-1} d^2 P_{k-1}^2 (|H|, |D_t H|, \cdots , |D_t^k H|) + \\
&\quad + Q^2_{k-1} (|P\widetilde H - H|, |P D_t \widetilde H - D_t H|, \cdots, |PD_t^k \widetilde H - D_t^k H|).
\end{align*}
{Thus it suffices to show that
\begin{equation} \label{difference of D_t ^k tilde H and D_t^k H}
\lim_{i\to \infty} (P D_t^m \widetilde H - D_t^m H)(\tau_i, x_i) \to 0,
\end{equation}
for $m=0, 1, 2, \cdots$. By using (\ref{de of g-1}), $H = g^{ij} \nabla^2_{ij} F$ and inductively applying (\ref{de of nabla^i A}), we see that for any $m\in\mathbb{N}$, 
\begin{align}\label{1111}
D_t^m H = Q(g,h,F,m)
\end{align}
where $Q$ is a quantity involving only the spatial derivatives of $F$, $g$, $h$ and the derivatives of $\overline{Rm_h}$ evaluating at $(x,t)$ and $F(x,t)$. Since $F$ and $\tilde F$ are smooth up to $t=0$, for $m=0$, we have 
$$ \lim_{i\to \infty} D_t H (\tau_i, x_i) = \lim_{i\to \infty} D_t \widetilde H (\tau_i, x_i)=Q(g_0,h,F_0,0)$$
where the right hand side is the value of $Q$ evaluating at $x$ and $F_0(x)$. Since $P$ is continuous and $P = \operatorname{Id}$ at $t =0$, we obtain (\ref{difference of D_t ^k tilde H and D_t^k H}) for $m=0$. We can argue similarly for all $m\in\mathbb{N}$.}

\end{proof}


\section{Estimates for the parallel transport $P$}

Next we estimate the norm of the parallel transport $P$. We follow the same notations as in the previous section. First we prove the following lemma. 

\begin{lem}\label{metrics}
Let $F, \widetilde F$ be two families of immersions which satisfy (\ref{uniformly equivalence metric}). Then 
\begin{align}
\label{difference of g bounded by difference of DF} |g-\tilde g|&\leq C |P\tilde F_*-F_*|, \\
\label{difference of H bounded by difference of A} |P\widetilde H -H|& \le C(|P\tilde F_*-F_*| |\widetilde A|+ |P\widetilde A - A|).
\end{align}
\end{lem}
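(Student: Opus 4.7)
The plan is to exploit two basic properties of $P$: it is a fiberwise linear isometry with respect to $h$ (being parallel transport for the Levi--Civita connection), and scalar coefficients pull through it unchanged. Both inequalities then reduce to short bookkeeping computations, with one small strategic choice in the second one.

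For (\ref{difference of g bounded by difference of DF}), I would use that $P: T_{\tilde p}M \to T_p M$ is an $h$-isometry to rewrite $\tilde g_{ij} = h(\widetilde F_i, \widetilde F_j) = h(P\widetilde F_i, P\widetilde F_j)$, matching both slots into the same fiber. Telescoping in each argument gives
$$
(\tilde g - g)_{ij} \;=\; h\bigl(P\widetilde F_i - F_i,\; P\widetilde F_j\bigr) + h\bigl(F_i,\; P\widetilde F_j - F_j\bigr),
$$
so Cauchy--Schwarz on each factor yields $|g-\tilde g|_g \le |P\widetilde F_* - F_*|\bigl(|F_*| + |P\widetilde F_*|_{g,h}\bigr)$. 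Lemma \ref{lemma DF bounded} controls $|F_*|$, and the isometry of $P$ combined with (\ref{uniformly equivalence metric}) gives $|P\widetilde F_*|_{g,h}^2 = g^{ij}\tilde g_{ij} \le C$, producing the claimed bound.

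For (\ref{difference of H bounded by difference of A}), since $P$ is linear and the $\tilde g^{ij}$ are scalars, $P\widetilde H = \tilde g^{ij}\,P\widetilde A_{ij}$. The strategic decomposition is
$$
P\widetilde H - H \;=\; g^{ij}\bigl(P\widetilde A_{ij} - A_{ij}\bigr) \;+\; \bigl(\tilde g^{ij} - g^{ij}\bigr)\,P\widetilde A_{ij},
$$
placing $P\widetilde A$ (rather than $A$) in the second summand; this is precisely what will put $|\widetilde A|$ rather than $|A|$ on the right-hand side of the inequality, as needed downstream. The first summand is bounded by $C\,|P\widetilde A - A|$ via (\ref{uniformly equivalence metric}). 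For the second, the identity $\tilde g^{-1} - g^{-1} = \tilde g^{-1}(g - \tilde g)g^{-1}$ together with (\ref{uniformly equivalence metric}) yields $|\tilde g^{-1} - g^{-1}|_g \le C\,|g - \tilde g|_g$, which the first part of the lemma then bounds by $C\,|P\widetilde F_* - F_*|$; finally $|P\widetilde A|_{g,h} \le C\,|\widetilde A|$ again by the isometry of $P$ and the uniform equivalence of $g$ and $\tilde g$.

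Main obstacle: essentially none, since this is a bookkeeping lemma. The only point worth emphasizing is the choice of decomposition in the second inequality: the order of terms is dictated by the form of the target estimate, and swapping $P\widetilde A$ with $A$ in the metric-difference term would produce the wrong norm of the second fundamental form on the right-hand side.
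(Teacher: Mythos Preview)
Your proposal is correct and follows essentially the same approach as the paper: the same telescoping of $h(P\widetilde F_*, P\widetilde F_*) - h(F_*, F_*)$ for the first inequality (combined with Lemma \ref{lemma DF bounded}), and the same decomposition $P\widetilde H - H = g^{ij}(P\widetilde A_{ij} - A_{ij}) + (\tilde g^{ij} - g^{ij})P\widetilde A_{ij}$ for the second, reducing the metric-difference factor via the first part. Your write-up is in fact slightly more explicit than the paper's about why $|P\widetilde F_*|$ and $|\tilde g^{-1} - g^{-1}|$ are controlled, but there is no substantive difference in method.
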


\begin{proof}
Since $g = h (F_*,F_*)$, together with Lemma \ref{lemma DF bounded},
\begin{align*}
|g-\tilde g| &= |h(F_*, F_*)-h (\widetilde F_* , \widetilde F_*)| \\
&=  |h( F_* ,F_*) -h(P\widetilde F_* , P\widetilde F_*)|\\
&= |h(  P\widetilde F_*-F_* , P\widetilde F_*) + h( F_* ,P\widetilde F_*-F_*)|\\
&\le C(|F_*|+|P\widetilde F_*|) |P\widetilde F_* - F_*|.
\end{align*}
Using the same argument and $H = g^{ij} A_{ij}$,
$$|P\widetilde H - H| \le C(|g^{-1}-\tilde g^{-1}| |\widetilde A| + |P\widetilde A -A|).$$
Thus (\ref{difference of H bounded by difference of A}) follows from (\ref{difference of g bounded by difference of DF}).
\end{proof}

\begin{prop} \label{proposition bound P}
Let $F, \widetilde F$ be two families of immersions so that (\ref{assumption on d}) holds. Then for any $p,q$, the parallel transport 
$$ P : \Gamma(\Sigma ,T^{p,q} \Sigma \otimes \widetilde N) \to \Gamma (\Sigma, T^{p,q} \Sigma \otimes N)$$
satisfy 
\begin{align}
\label{bound P} |P| =|P^{-1}| &=1,\\
\label{bound nabla P} |\nabla P|, |\nabla P^{-1}| & \le  C^{loc}_0 d
\end{align}
and
\begin{align}
\label{Delta P} |\Delta P| & \le C^{loc}_1\big(d\cdot \tilde P(|A|) + |P\widetilde F_* - F_ *| + | \Gamma - \widetilde \Gamma|+|P\widetilde A -A|\big)
\end{align}
for some polynomial $\tilde P$. {Here $\Delta P$ is the rough Laplacian of $P$ with respect to the connection $\nabla$: $\Delta P = g^{ij} \nabla_i \nabla _j P$.} 
\end{prop}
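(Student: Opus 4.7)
The identity $|P| = |P^{-1}| = 1$ is immediate: $P$ is parallel transport along an $h$-geodesic, hence an isometry on the fibres $\widetilde N_x \to N_x$, and it acts trivially on the $T^{p,q}\Sigma$ factor. Differentiating $PP^{-1} = \operatorname{Id}_N$ and using $\nabla \operatorname{Id}_N = 0$ yields $\nabla P^{-1} = -P^{-1}(\nabla P)P^{-1}$, so the bound on $|\nabla P^{-1}|$ reduces to that on $|\nabla P|$.

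To bound $|\nabla P|$, fix $x_0 \in \Sigma$, $u \in T_{x_0}\Sigma$, and let $\alpha$ be a smooth curve with $\alpha(0) = x_0$, $\alpha'(0) = u$. Extend $\widetilde Y \in \widetilde N_{x_0}$ along $\alpha$ by $\nabla^{\widetilde F}$-parallel transport, so that $D_\tau \widetilde Y = 0$. On the two-parameter surface $(\tau, s) \mapsto \gamma(\alpha(\tau), s)$, define $Y(\tau, s)$ to be the parallel transport of $\widetilde Y(\tau)$ backwards from $s = 1$; then $Y|_{s = 0} = P\widetilde Y$ along $\alpha$, $\bar\nabla_{\dot\gamma} Y \equiv 0$, and $\bar\nabla_J Y|_{s = 1} = \bar\nabla_{\widetilde F_* \alpha'}\widetilde Y = 0$, where $J = \partial_\tau \gamma$. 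The identities $[J, \dot\gamma] = 0$ and $\bar\nabla_{\dot\gamma} Y = 0$ give $\bar\nabla_{\dot\gamma}\bar\nabla_J Y = \bar R(\dot\gamma, J) Y$, which integrates in $s$ to
\[
(\nabla_u P)\widetilde Y \;=\; -\int_0^1 P_{s \to 0}\bigl[\bar R(\dot\gamma, J)\, Y\bigr]\, ds.
\]
Since $|\dot\gamma| = d$, $|Y| = |\widetilde Y|$, and $|J| \le C$ by Lemma \ref{lemma DF bounded} and (\ref{bounding J}), this yields $|\nabla P| \le C_0^{loc} d$.

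For $|\Delta P|$, pick a $g$-orthonormal basis $\{e_a\}$ of $T_{x_0}\Sigma$ and let $\alpha_a$ be the $g$-geodesic with $\alpha_a'(0) = e_a$. Repeating the construction along $\alpha_a$, the fact that $\alpha_a$ is a $g$-geodesic together with $D_\tau \widetilde Y = 0$ gives $(\nabla^2 P)(e_a, e_a)(\widetilde Y) = D_\tau^2(P\widetilde Y)|_{\tau = 0} = \bar\nabla_J \bar\nabla_J Y|_{s = 0}$. Since $\bar\nabla_J Y$ vanishes identically along the curve $s = 1$, so does $\bar\nabla_J \bar\nabla_J Y$ there, and hence
\[
\bar\nabla_J \bar\nabla_J Y|_{s = 0} \;=\; -\int_0^1 P_{s \to 0}\bigl[\bar\nabla_{\dot\gamma} \bar\nabla_J \bar\nabla_J Y\bigr]\, ds.
\]
A commutator expansion using $\bar\nabla_{\dot\gamma} Y = 0$ and $[J, \dot\gamma] = 0$ produces four pieces in the integrand: $2\bar R(\dot\gamma, J) \bar\nabla_J Y$, $(\bar\nabla_J \bar R)(\dot\gamma, J) Y$, $\bar R(\bar\nabla_J \dot\gamma, J) Y$, and $\bar R(\dot\gamma, \bar\nabla_J J) Y$.

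Each factor is controlled by the tools of Section 3 applied along $\alpha_a$. A further integration of $\bar\nabla_{\dot\gamma}\bar\nabla_J Y = \bar R(\dot\gamma, J) Y$ gives $|\bar\nabla_J Y| \le C d$; the identity $\bar\nabla_J \dot\gamma = \bar\nabla_{\dot\gamma} J$ with (\ref{bounding DJ}) yields $|\bar\nabla_J \dot\gamma| \le C d^2 + C|P\widetilde F_* - F_*|$; and $|\bar\nabla_J J|$ is bounded via Proposition \ref{proposition estimate D_v D_J^k J} with $k = 1$. The source of the $|\widetilde\Gamma - \Gamma|$ term in (\ref{Delta P}) is the computation along the $g$-geodesic $\alpha_a$ --- which is \emph{not} a $\tilde g$-geodesic --- that
\[
D_\tau \partial_\tau p = A(\alpha', \alpha'), \qquad D_\tau \partial_\tau \tilde p = \widetilde A(\alpha', \alpha') + (\widetilde\Gamma^k_{ij} - \Gamma^k_{ij})\alpha'^i \alpha'^j\, \widetilde F_k,
\]
so that $|PD_\tau \partial_\tau \tilde p - D_\tau \partial_\tau p| \le |P\widetilde A - A| + C|\widetilde\Gamma - \Gamma|$. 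Feeding these back into the four pieces, summing over $a$, and using $d \le 1$ to absorb extra factors of $d$ delivers (\ref{Delta P}) with $\tilde P(x) = C(1 + x)$. The main technical obstacle is to organize the commutator expansion cleanly enough that no $|\nabla A|$ or higher-derivative quantity survives on the right-hand side.
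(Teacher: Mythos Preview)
Your proof is correct and follows essentially the same approach as the paper's. Both arguments reduce to the identity $(\nabla_i P)Z = \bar\nabla_{J_i} X|_{s=0}$ and $(\nabla_i\nabla_i P)Z = \bar\nabla_{J_i}\bar\nabla_{J_i} X|_{s=0}$ (in normal coordinates / along a $g$-geodesic), then integrate the ODE $\bar\nabla_{\dot\gamma}\bar\nabla_{J} X = \bar R(\dot\gamma,J)X$ and its $\bar\nabla_J$-derivative from $s=1$ to $s=0$; the paper states the second-order commutator as $-\bar R(\dot\gamma,J)\bar\nabla_J X - \bar\nabla_J(\bar R(\dot\gamma,J)X)$ while you expand it into the same four pieces, and your explicit computation of $D_\tau\partial_\tau p$ and $D_\tau\partial_\tau\tilde p$ along the $g$-geodesic is exactly what the paper means by ``trace the induction proof of Proposition~\ref{proposition estimate D_v D_J^k J}'' to produce the $|\Gamma-\widetilde\Gamma|$ term. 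One minor caveat: the specific form $\tilde P(x)=C(1+x)$ depends on the exact shape of the universal polynomial $P^1_1$ in Proposition~\ref{proposition estimate D_v D_J^k J}, which the paper leaves unspecified; it is safer to claim only ``some polynomial $\tilde P$'' as in the statement.
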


\begin{proof}
Since $P$ is given by $P(X\otimes Z) = X \otimes PZ$ for all $ X\in \Gamma (\Sigma, T^{p,q}\Sigma)$ and $Z\in \Gamma (\Sigma, \widetilde N)$, one can without loss of generality assume that $p=q=0$. 

Since parallel transport preserves length, $|PZ| = |Z|$ for all $Z \in T_{\tilde p}M$ and thus $|P| = |P^{-1}| =1$. To show (\ref{bound nabla P}), note again it suffices to assume $d\neq 0$. Let $x = (x^1, \cdots, x^n) \in \Sigma$, $i \in\{ 1, \cdots, n\}$ and $Z\in T_{\tilde p}M$ be fixed. Parallel transport $Z$ along the curve in $M$ with tangent vector $\widetilde F_i$. Thus $\nabla_{i} Z = 0$ and by the Leibniz rule,
\begin{equation*}
(\nabla_i P) (Z) = \nabla_i(PZ) - P(\nabla_i Z )= \nabla_i (PZ).
\end{equation*}
{Let $X$ be the parallel vector field along $\gamma$ with $X(1) = Z$. Then $X(0) = PZ$ by definition of $P$. By the definition of the connection on $N$, we have $\nabla_i (PZ) = \overline\nabla_{F_i} X(0)$. Lastly, since $F_i = J_i(0)$, we have $\nabla_i (PZ) = (\bar\nabla_{J_i} X)(0)$}. Differentiating the parallel transport equation gives
\begin{equation*}
\bar\nabla_{\dot\gamma} \bar\nabla_{J_i} X = - \bar R(\dot\gamma , J_i ) X, \ \ \ \nabla_{J_i} X (1) = \nabla_i Z  = 0.
\end{equation*}
By (\ref{bounding J}), we have
\begin{equation} \label{bound nabla_ dot gamma nabla _J_i X}
|\bar\nabla_{\dot\gamma} \bar\nabla_{J_i} X| \le C^{loc}_0 d (|F_i| + |P\widetilde F_i - F_i|)  |Z| .
\end{equation}
Together with lemma \ref{lemma DF bounded} this implies
\begin{equation*}
|(\nabla_iP) Z| = |\nabla_i (PZ)| \le C^{loc}_0 d|Z|.
\end{equation*}
Since this holds for all $Z$, we obtain (\ref{bound nabla P}). To show (\ref{Delta P}), we calculate under the normal coordinate at $x$ in $(\Sigma, g_t)$. Thus $\Delta P =\sum_i  \nabla_i \nabla_i P$. Now for each fixed $i$ and $Z\in T_{\tilde p}M$, we again parallel transport $Z$ along the curve in $M$ with tangent vector $\widetilde F_i$. Thus 
\begin{align*}
(\nabla_i \nabla_i P) (Z) &= \nabla_i [(\nabla_i P)(Z)] - \nabla P (\nabla_{e_i}e_i, Z) - (\nabla_i P)(\nabla_i Z) \\
&= \nabla_i (\nabla_i (PZ)) - \nabla_i (P (\nabla_i Z))- \nabla P (\nabla_{e_i}e_i, Z) - (\nabla_i P)(\nabla_i Z) \\
&= \nabla_i (\nabla_i (PZ))\\
&= \bar\nabla_{J_i} \bar\nabla_{J_i} X (0),
\end{align*}
where again $X$ is the parallel vector field along $\gamma$ with $X(1) = Z$. Since
\begin{equation*}
\bar\nabla_{\dot\gamma} \bar\nabla_{J_i} \bar\nabla_{J_i} X = -\bar R(\dot\gamma, J_i) \bar\nabla_{J_i}X - \bar\nabla _{J_i}  (\bar R(\dot\gamma, J_i ) X),
\end{equation*}
using (\ref{bound nabla_ dot gamma nabla _J_i X}), $\bar\nabla_{J_i} \bar\nabla_{J_i} X(1) = 0$, we obtain
\begin{equation*}
|2 \bar R (\dot\gamma, J_i) \bar\nabla_{J_i} X| \le C^{loc}_0 d^2 |Z|.
\end{equation*}
On the other hand, if we trace the induction proof of proposition \ref{proposition estimate D_v D_J^k J} and use (\ref{difference of H bounded by difference of A}), we have
\begin{align*}
|\bar\nabla_{J_i} J_i| &\le |\bar\nabla_{F_i} F_i| + |\bar\nabla_{\dot\gamma} \bar\nabla_{J_i} J_i|\\
&\le |A_{ii}|+ C^{loc}_0 d^2 (P(|F_i|, |A_{ii}|)+ C(|P\widetilde F_* - F_*| +|P \bar\nabla_{\widetilde F_i } \widetilde F_i - \bar\nabla_{F_i} F_i|)\\
&\le |A|+ C^{loc}_0 d^2 P(1, |A|) + C (|P\widetilde F_* - F_*| +|P\widetilde A_{ii} -A_{ii}| + | \widetilde\Gamma _{ii} ^k \widetilde F_k|) \\
&\le C^{loc}_0 \big(\tilde P(|A|) +|P\widetilde F_* - F_ *| + | \Gamma - \widetilde \Gamma|+|P\widetilde A -A|\big)
\end{align*}
 for some polynomial $\tilde P$. This implies
\begin{equation*}
|\bar R(\dot\gamma, \bar\nabla_{J_i} J_i)X| \le C^{loc}_0 d |Z|\big(\tilde P(|A|) +|P\widetilde F_* - F_ *| + | \Gamma - \widetilde \Gamma|+|P\widetilde A -A|\big).
\end{equation*}
Thus we have
\begin{equation*}
|(\nabla_i \nabla_i P) (Z)| \le C^{loc}_1 \big(d \tilde P(|A|) + |P\widetilde F_* - F_ *| + | \Gamma - \widetilde \Gamma|+|P\widetilde A -A|\big)|Z|
\end{equation*}
and this gives (\ref{Delta P}).
\end{proof}

The proof of the following proposition is similar to that of proposition \ref{proposition bound P} and is skipped.

\begin{prop} \label{proposition bound D_t P}
We have the estimates
\begin{equation} \label{derivative estimates of P}
|D_t P| \le  C^{loc}_0d ( |F_t|+ |P\widetilde F_t-F_t| ).
\end{equation}
\end{prop}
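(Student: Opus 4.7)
The plan is to mirror the proof of the spatial estimate for $P$ given in Proposition \ref{proposition bound P}, replacing the spatial direction by the time direction. Fix $(t,x)$ with $d > 0$; the case $d = 0$ is handled by the same sequential approximation argument used in the proof of Proposition \ref{proposition estimate D_t v, Dv}, since $D_tP$ is smooth.

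First, let $Z \in T_{\tilde p} M$ be arbitrary and extend it to a vector field along the curve $\tau \mapsto \widetilde F(\tau, x)$ in $M$ by parallel transport, so that $D_t Z = 0$ at $(t,x)$. By the Leibniz rule for the induced connection on $\End(\widetilde N, N)$,
\begin{equation*}
(D_t P)(Z) = D_t (PZ) - P(D_t Z) = D_t(PZ).
\end{equation*}
Let $X = X(t, x, s)$ be the parallel vector field along the geodesic $s \mapsto \gamma(t, x, s)$ with terminal value $X(t, x, 1) = Z(t, x)$, so that $X(t, x, 0) = PZ$. Differentiating the $s$-parallel transport equation $\bar\nabla_{\dot\gamma} X = 0$ in $t$ and using $[J_t, \dot\gamma] = 0$ yields the Jacobi-type ODE
\begin{equation*}
\bar\nabla_{\dot\gamma} \bar\nabla_{J_t} X = -\bar R(\dot\gamma, J_t)X, \qquad \bar\nabla_{J_t} X(1) = D_t Z = 0,
\end{equation*}
and, as in the spatial case, $D_t(PZ) = (\bar\nabla_{J_t} X)(0)$.

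Next, integrate this ODE from $s=1$ to $s=0$. Since $|\dot\gamma| = d$, $|X| \equiv |Z|$, and $|\bar R| \le B_0^{loc}$ in a neighbourhood of the geodesic by (\ref{assumption on d}), we obtain
\begin{equation*}
|(\bar\nabla_{J_t} X)(0)| \le B_0^{loc} d \, |Z| \sup_{s \in [0,1]} |J_t(s)|.
\end{equation*}
The bound on $|J_t|$ is supplied by Lemma \ref{Lemma bounding Jacobi fields} with the choice $\tau \mapsto (F(\tau, x), \widetilde F(\tau, x))$, yielding
\begin{equation*}
\sup_s |J_t| \le |F_t| + C_0^{loc} |F_t| d^2 + 2|P\widetilde F_t - F_t|.
\end{equation*}
Combining these estimates, using $B_0^{loc} d^2 < 1/2$ from (\ref{assumption on d}) to absorb the $C_0^{loc} |F_t| d^2$ term into $|F_t|$, and noting that $Z \in T_{\tilde p} M$ was arbitrary, produces
\begin{equation*}
|D_t P| \le C_0^{loc} d (|F_t| + |P\widetilde F_t - F_t|).
\end{equation*}
The argument involves no new ideas beyond Proposition \ref{proposition bound P}; the only point requiring care is verifying that $\bar\nabla_{J_t} X(1) = D_tZ$ vanishes under the chosen extension of $Z$, which is precisely why we parallel-transport $Z$ along $\widetilde F(\cdot, x)$ rather than along $F(\cdot, x)$.
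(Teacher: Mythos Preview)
Your proof is correct and follows exactly the approach the paper indicates: the paper skips the proof, stating only that it is similar to Proposition \ref{proposition bound P}, and your argument is precisely the time-direction analogue of that proof, with the Jacobi-field bound (\ref{bounding J}) applied to $J_t$ in place of $J_i$. Your closing remark explaining why $Z$ must be parallel-transported along $\widetilde F(\cdot,x)$ (so that the boundary condition $\bar\nabla_{J_t}X(1)=0$ holds) is a helpful clarification that the paper leaves implicit.
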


Next we derive an estimates for higher time covariant derivatives of $P$.

\begin{prop} \label{proposition higher time derivative estimates of P}
The $k$-th time derivatives of $P$ satisfies 
\begin{align} \label{higher time derivative estimates of P}
|D_t^k P| &\le C^{loc}_{k-1} d P^3_{k-1} (| F_t| , |D_t F_t| , \cdots , |D_{k-1} F_t|) +\\
&\quad + Q^3_{k-1} (|P\widetilde F_t - F_t|, \cdots, |P D_t^{k-1} \widetilde F_t -D_t^{k-1} F_t|), \nonumber
\end{align}
where $P_{k-1}^3, Q_{k-1}^3$ are universal polynomials of $k$-variables.
\end{prop}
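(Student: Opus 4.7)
The plan is to adapt the approach of Propositions \ref{proposition bound P} and \ref{proposition bound D_t P} to the time direction and argue by induction on $k$. Fix $x \in \Sigma$ and $t$, set $\tilde p(t) = \widetilde F(t, x)$, and pick $Z \in T_{\tilde p(t)} M$. Parallel-transport $Z$ along the curve $s \mapsto \widetilde F(s, x)$ so that $D_t^j Z = 0$ for all $j \ge 1$. Let $X = X(s,t)$ be the vector field along the homotopy $\gamma(t, x, \cdot)$ obtained by parallel-transporting $Z(t)$ backwards along $\gamma$, so that $\bar\nabla_{\dot\gamma} X = 0$, $X(1,t) = Z(t)$, and $X(0,t) = P(t) Z(t)$. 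Iterated Leibniz together with $D_t^j Z = 0$ for $j \ge 1$ yields
$$(D_t^k P)(Z) = D_t^k(PZ) = \bar\nabla_{J_t}^k X \big|_{s=0},$$
with boundary conditions $\bar\nabla_{J_t}^j X|_{s=1} = D_t^j Z = 0$ for $1 \le j \le k$.

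Differentiating $\bar\nabla_{\dot\gamma} X = 0$ repeatedly in the $J_t$ direction, and using $[J_t, \dot\gamma] = 0$ with the curvature commutation $[\bar\nabla_{J_t}, \bar\nabla_{\dot\gamma}]Y = \bar R(J_t, \dot\gamma) Y$, one shows by an induction analogous to (\ref{higher order Jacobi field equation}) that $\bar\nabla_{\dot\gamma} \bar\nabla_{J_t}^k X$ is a sum of terms of the form
$$(\bar\nabla^{i_r} \bar R)^{i_r'} * \dot\gamma * (\bar\nabla_{J_t}^{k_p} J_t)^{k_p'} * (\bar\nabla_{J_t}^l X)^{l'},$$
with $i_r \le k-1$, $l \le k-1$ and the $J_t$-weight bounded by $k$. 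Crucially no term on the right involves $\bar\nabla_{J_t}^k X$ itself, so integrating along $\dot\gamma$ from $s = 1$ (where $\bar\nabla_{J_t}^k X$ vanishes) gives
$$|\bar\nabla_{J_t}^k X(0)| \le \sup_{s \in [0,1]} |\bar\nabla_{\dot\gamma} \bar\nabla_{J_t}^k X|.$$

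One then estimates the right-hand side termwise: $|\dot\gamma| = d$, $|X| = |Z|$ is constant along $\gamma$, the Jacobi-field derivatives $|\bar\nabla_{J_t}^{k_p} J_t|$ are controlled via Proposition \ref{proposition estimate D_v D_J^k J} by universal polynomials in $|D_t^{k_p} F_t|$ and $|P D_t^{k_p} \widetilde F_t - D_t^{k_p} F_t|$, and the inductive hypothesis controls $|\bar\nabla_{J_t}^l X|$ for $l < k$. Combining these with Cauchy--Schwarz and the smallness $d < 1$ to split mixed products, and factoring out $|Z|$, yields the bound (\ref{higher time derivative estimates of P}).

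The main obstacle is the inductive bookkeeping on the universal polynomials $P^3_{k-1}$, $Q^3_{k-1}$ and the constants $C^{loc}_{k-1}$: at each induction step one must verify that no curvature derivative beyond $\bar\nabla^{k-1}\bar R$ enters (so the constant is truly $C^{loc}_{k-1}$), and that the products on the right of the commutation identity regroup cleanly into a pure polynomial $P^3_{k-1}$ in the $|D_t^j F_t|$ (carrying the factor $d$) plus a pure polynomial $Q^3_{k-1}$ in the differences $|P D_t^j \widetilde F_t - D_t^j F_t|$. Beyond this clerical work the argument is strictly parallel to Propositions \ref{proposition estimate D_v D_J^k J}, \ref{proposition bound P} and \ref{proposition bound D_t P}.
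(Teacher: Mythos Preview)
Your proposal is correct and follows the same route as the paper. One small correction: in your schematic expansion of $\bar\nabla_{\dot\gamma}\bar\nabla_{J_t}^k X$, the $\dot\gamma$ slot is also hit by $J_t$-derivatives when you expand $\bar\nabla_{J_t}^i\big(\bar R(J_t,\dot\gamma)\bar\nabla_{J_t}^{k-1-i}X\big)$, so the general term carries a factor $\bar\nabla_{J_t}^l\dot\gamma$ rather than just $\dot\gamma$ (the paper writes $\sum_{j+i+l+m=k-1}(\bar\nabla_{J_t}^j\bar R)*(\bar\nabla_{J_t}^i J_t)*(\bar\nabla_{J_t}^l\dot\gamma)*(\bar\nabla_{J_t}^m X)$); to control these factors you must also invoke Proposition~\ref{proposition bounding nabla_J^k dot gamma} alongside Proposition~\ref{proposition estimate D_v D_J^k J}, after which your ``$|\dot\gamma|=d$'' step and the rest of the induction go through exactly as you describe.
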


In particular, we have
\begin{thm} \label{D_t^k P =0 at t=0}
Let $F, \widetilde F$ be smooth MCFs starting at $F_0$, then
\begin{equation}
D_t^k P \big|_{t=0} = 0
\end{equation}
for $k=1, 2, \cdots$.
\end{thm}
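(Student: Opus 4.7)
My plan is to reduce the statement directly to Proposition~\ref{proposition higher time derivative estimates of P}, following the same approximation-then-limit strategy used for Theorem~\ref{D_t^n v = 0 at t=0}. The idea is that the right-hand side of (\ref{higher time derivative estimates of P}) decomposes into a piece carrying an overall factor of $d$ and a piece built from a universal polynomial $Q^3_{k-1}$ in the differences $|PD_t^m\widetilde F_t - D_t^m F_t|$; at $t=0$ both terms vanish because $d(0,x)=0$ and $F_t$, $\widetilde F_t$ and their higher $D_t$-derivatives are all intrinsic polynomial expressions in the common initial geometry.

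More precisely, fix $x\in\Sigma$. If $v\equiv 0$ on a spacetime neighbourhood of $(0,x)$, then $P\equiv\operatorname{Id}$ there and $D_t^kP=0$ trivially. Otherwise, choose $(\tau_i,x_i)\to(0,x)$ with $v(\tau_i,x_i)\neq 0$; at these points Proposition~\ref{proposition higher time derivative estimates of P} applies. For the first term, $d(\tau_i,x_i)\to 0$, while the arguments $|F_t|,|D_tF_t|,\ldots,|D_t^{k-1}F_t|$ of the universal polynomial $P^3_{k-1}$ are all smooth on $[0,T]\times\Sigma$ by the assumed smoothness of $F$ up to $t=0$, hence bounded near $(0,x)$; thus this term tends to $0$. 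For the second term, the argument of Theorem~\ref{D_t^n v = 0 at t=0} (namely the identity $D_t^mH=Q(g,h,F,m)$ obtained by iterating (\ref{de of g-1}) and (\ref{de of nabla^i A})) gives
\[
\lim_{i\to\infty} D_t^mF_t(\tau_i,x_i) = \lim_{i\to\infty} D_t^m\widetilde F_t(\tau_i,x_i) = Q(g_0,h,F_0,m+1),
\]
and since $P$ is continuous with $P|_{t=0}=\operatorname{Id}$, each entry $|PD_t^m\widetilde F_t - D_t^mF_t|(\tau_i,x_i)\to 0$. Because $Q^3_{k-1}$ is a universal polynomial with $Q^3_{k-1}(0)=0$, the whole second term tends to $0$ as well, so $D_t^kP(0,x)=0$.

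The only technical point that requires any care is justifying that $P$ (and hence $D_t^kP$) is well-defined and smooth up to $t=0$: since $d(0,x)=0<i_0$, the shortest geodesic from $F(t,x)$ to $\widetilde F(t,x)$ is unique for $t$ small, and one sees by the usual inverse function argument for $\exp$ that both $v$ and parallel transport along $\exp_p(sv)$ depend smoothly on the pair of endpoints on a neighbourhood of the diagonal; hence $P$ is smooth in $(t,x)$ near $t=0$ even where $d$ degenerates to zero. With this in hand, the argument above is the main (and essentially the only) obstacle, and it is handled by the two-step approximation exactly as in Proposition~\ref{proposition estimate D_t v, Dv} and Theorem~\ref{D_t^n v = 0 at t=0}.
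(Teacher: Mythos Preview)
Your proposal is correct and follows essentially the same approach as the paper: the paper states Theorem~\ref{D_t^k P =0 at t=0} with the words ``In particular'' immediately after Proposition~\ref{proposition higher time derivative estimates of P}, leaving the deduction implicit, and your write-up supplies exactly the intended details by mimicking the approximation argument of Theorem~\ref{D_t^n v = 0 at t=0}. The minor index $m+1$ in your displayed limit should read $m$ (since $F_t=H$ gives $D_t^mF_t=D_t^mH=Q(g,h,F,m)$), but this is cosmetic and does not affect the argument.
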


Now we prove proposition \ref{proposition higher time derivative estimates of P}.

\begin{proof}
As in the proof of proposition \ref{proposition bound P}, let $Z \in T_{\tilde p}M$ and extend it to a parallel vector fields along the integral curve of $\widetilde F_t$. Then
$$ (D^k_tP )(Z) = D_t^k (PZ).$$
Note that $D_t^k (PZ) = \bar\nabla_{J_t}^k X(0)$, where $X$ is the parallel transport of $Z$ along $\gamma$. We will prove by induction that
\begin{align} \label{Induction step for higher time derivative estimates of P}
|\bar\nabla_{J_t}^k X| &\le \bigg( (C^{loc}_{k-1} d P^3_{k-1} (| F_t| , |D_t F_t| , \cdots , |D^{k-1}_t F_t|) +\\
&\quad + Q^3_{k-1} (|P\widetilde F_t - F_t|, \cdots, |P D_t^{k-1} \widetilde F_t -D_t^{k-1} F_t|) \bigg) |Z|, \nonumber
\end{align}
where $P^3_{k-1}, Q^3_{k-1}$ are universal polynomials of $k$-variables.

When $k=1$, one can show as in the proof of proposition \ref{proposition bound P} the following estimates:
\begin{equation}
|\bar\nabla_{J_t} X | \le \big( C^{loc}_0 d |F_t| + C|P\widetilde F_t - F_t| \big) |Z|.
\end{equation}
Next we assume that (\ref{Induction step for higher time derivative estimates of P}) holds for all integers strictly smaller then $k$. by commuting covariant derivatives, we have
\begin{align*}
 \bar\nabla_{\dot\gamma} \bar\nabla_{J_t}^k X&= \sum_{i=0}^{k-1} \bar\nabla_{J_t}^i (\bar R(J_t, \dot\gamma) \bar\nabla_{J_t}^{k-1-i} X) \\
 &= \sum_{j+i+l+m = k-1}( \bar\nabla_{J_t}^j \bar R) * (\bar\nabla_{J_t}^iJ_t) *( \bar\nabla_{J_t}^l\dot\gamma )* (\bar\nabla_{J_t} ^{m} X).
\end{align*}
Thus the induction step is proved using $|\bar\nabla_{J_t}^k J_t| \le |D_t F_t| + \sup|\bar\nabla_{\dot\gamma} \nabla_{J_t} ^kJ_t|$, proposition \ref{proposition estimate D_v D_J^k J}, proposition \ref{proposition bounding nabla_J^k dot gamma}, the induction hypothesis and Cauchy Schwarz inequality. This finishes the proof of the proposition.
\end{proof}

Next, we prove the following lemma which estimates the difference of the restriction of ambient tensors to $F$ and $\widetilde F$. Let $S$ be a $(p,q)$-tensor on $M$. Then $S|_F$ is a section of the bundle $N^{\otimes p} \otimes (N^*)^{\otimes q}$ over $\Sigma$. Let $P^*(S |_{\widetilde F})$ be given by
\begin{equation*}
P^*S |_{\widetilde F} (a_1, \cdots, a_p, b^1 \cdots, b^q) = S|_{\widetilde F} ( P^* a_1 ,\cdots, P^* a_p, P^{-1} b^1, \cdots, P^{-1} b^q)
\end{equation*}
for all $a_i \in N^*, b^j \in N$.

\begin{lem} \label{lemma bound difference of ambient tensor}
With the above definition,
\begin{equation*}
| P^* (S|_{\widetilde F}) - S|_{F}| \le \sup |\bar \nabla S| \cdot d.
\end{equation*}
\end{lem}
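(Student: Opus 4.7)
The plan is to reduce the inequality to an application of the fundamental theorem of calculus along the geodesic $\gamma$, using the fact that parallel transport preserves tensor norms.

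First, I would fix a point $(t,x)$ and set $p = F(t,x)$, $\tilde p = \widetilde F(t,x)$, with $\gamma(s) = \exp_p(sv)$ the unique shortest geodesic from $p$ to $\tilde p$, so that $|\dot\gamma(s)| \equiv d$. I would then fix arbitrary $a_1,\ldots,a_p \in N^* = T_p^*M$ and $b^1,\ldots,b^q \in N = T_pM$, and let $a_i(s), b^j(s)$ denote their parallel transports along $\gamma$. By the definition of $P$ and $P^{-1}$ and the compatibility of parallel transport with the natural pairing, $a_i(1) = P^* a_i$ and $b^j(1) = P^{-1} b^j$.

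The key step is to consider the scalar function
\begin{equation*}
\phi(s) = S(\gamma(s))\bigl(a_1(s),\ldots,a_p(s),b^1(s),\ldots,b^q(s)\bigr).
\end{equation*}
Since the $a_i(s)$ and $b^j(s)$ are parallel along $\gamma$, the Leibniz rule yields
\begin{equation*}
\phi'(s) = (\bar\nabla_{\dot\gamma} S)\bigl(a_1(s),\ldots,a_p(s),b^1(s),\ldots,b^q(s)\bigr).
\end{equation*}
By the definitions, $\phi(0) = S|_F(a_1,\ldots,b^q)$ while $\phi(1) = S|_{\widetilde F}(P^*a_1,\ldots,P^{-1}b^q) = P^*(S|_{\widetilde F})(a_1,\ldots,b^q)$. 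The fundamental theorem of calculus then gives
\begin{equation*}
P^*(S|_{\widetilde F})(a_1,\ldots,b^q) - S|_F(a_1,\ldots,b^q) = \int_0^1 (\bar\nabla_{\dot\gamma} S)\bigl(a_1(s),\ldots,b^q(s)\bigr)\,ds.
\end{equation*}

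Finally, I would estimate the integrand pointwise. Since parallel transport is an isometry, $|a_i(s)| = |a_i|$, $|b^j(s)| = |b^j|$, and $|\dot\gamma(s)| = d$, so
\begin{equation*}
\bigl|(\bar\nabla_{\dot\gamma}S)(a_1(s),\ldots,b^q(s))\bigr| \le |\bar\nabla S|(\gamma(s))\cdot d\cdot \prod_i |a_i|\prod_j |b^j| \le \sup|\bar\nabla S|\cdot d\cdot \prod_i |a_i|\prod_j |b^j|.
\end{equation*}
Integrating over $s\in[0,1]$ and taking the supremum over unit $a_i$ and $b^j$ yields the desired bound. There is no real obstacle here; the only thing to be careful about is the convention for how $P^*$ and $P^{-1}$ act on the dual and primal slots, which is already pinned down by the definition preceding the statement.
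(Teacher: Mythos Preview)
Your proof is correct and follows essentially the same approach as the paper: parallel transport the test covectors and vectors along the geodesic $\gamma$, apply the fundamental theorem of calculus to the scalar function $S(a_1(s),\ldots,b^q(s))$, and use that parallel transport is an isometry together with $|\dot\gamma|=d$ to estimate the integrand. Your write-up is in fact slightly cleaner about the direction of transport and the identification $a_i(1)=P^*a_i$, $b^j(1)=P^{-1}b^j$.
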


\begin{proof}
Let $a_i, b^j$ be arbitrary and $a_i(s), b^j(s)$ be the respective parallel transport along $- \gamma$. Then by the fundamental theorem of calculus,
\begin{align*}
(P^*S|_{\widetilde F} - S|_{F}) (a_1, \cdots, a_p, b^1 , \cdots, b^q) &= \int_0^1 \partial_s \big( S(a_1(s), \cdots, a_p(s), b^1(s), \cdots, b^q(s)\big) ds \\
&=-\int_0^1 (\bar\nabla _{\dot\gamma} S)( a_1(s), \cdots, a_p(s), b^1(s), \cdots, b^q(s)) ds
\end{align*}
since $a_i$ and $b^j$'s are parallel along $-\gamma$. Thus
\begin{equation*}
|(P^*S|_{\widetilde F} - S|_{F}) (a_1, \cdots, a_p, b^1 , \cdots, b^q) | \le \sup |\bar\nabla S| |a_1| \cdots |a_p| |b^1 | \cdots |b^q| |\dot\gamma|.
\end{equation*}
Since $d = |\dot\gamma|$, the lemma is shown.
\end{proof}


\section{Main estimates}
In this section we provide the necessary estimates for the next two sections. In this section, we assume that $F, \widetilde F$ are both solutions to the MCF starting at $F_0$ which satisfies  (\ref{uniformly equivalence metric}) and (\ref{F C^0  close to F_0}). In particular, by choosing a small $T$, we assume that $d$ satisfies (\ref{assumption on d}).


First we estimate the time derivative of the quantities $P\widetilde F_* - F_*$ and $\Gamma - \widetilde \Gamma$.

\begin{lem} \label{lemma partial_t of difference of DF}
We have
\begin{equation}\label{partial_t of difference of DF}
|D_t (P\widetilde F_*  -F_*)| \le C_0 (|A| + |\widetilde A|) d + C|\widetilde \nabla\widetilde A| |P\widetilde F_* - F_*|+ C|P\widetilde\nabla\widetilde A - \nabla A|.
\end{equation}
\end{lem}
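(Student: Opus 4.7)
The plan is to expand $D_t(P\widetilde F_* - F_*)$ via the Leibniz rule on the endomorphism bundle, using the MCF equation to convert time derivatives into spatial derivatives of the mean curvature, and then estimate each resulting piece with the propositions from Sections 3 and 4.

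First I would observe that since $F_t = H$, the definition of $D_t$ combined with $\partial_t F^\alpha_i = \partial_i F^\alpha_t$ gives $D_t F_i^\alpha = \nabla_i H^\alpha$, and analogously $\widetilde D_t \widetilde F_i^\alpha = \widetilde\nabla_i \widetilde H^\alpha$. Then I would apply the Leibniz rule on the endomorphism bundle (viewing $P$ as a section of $\widetilde N^*\otimes N$) to write
\begin{equation*}
D_t(P\widetilde F_* - F_*) = (D_tP)\,\widetilde F_* + P\bigl(\widetilde\nabla \widetilde H\bigr) - \nabla H.
\end{equation*}
This is the key structural identity; once it is established, everything else is routine estimation.

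For the first term, Proposition \ref{proposition bound D_t P} gives $|D_tP|\le C_0^{loc}d\bigl(|H|+|P\widetilde H - H|\bigr)$. Since $|H|\le C|A|$ and the triangle inequality $|P\widetilde H - H|\le |\widetilde H|+|H|\le C(|A|+|\widetilde A|)$ holds, combined with $|\widetilde F_*|\le C$ from Lemma \ref{lemma DF bounded}, I get $|(D_tP)\widetilde F_*|\le C_0(|A|+|\widetilde A|)d$, which is precisely the first term on the right-hand side of \eqref{partial_t of difference of DF}.

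For the second term I would use $H = g^{ij}A_{ij}$ and $\widetilde H = \tilde g^{ij}\widetilde A_{ij}$ to split
\begin{equation*}
P(\widetilde\nabla \widetilde H) - \nabla H = (\tilde g^{ij}-g^{ij})\,P(\widetilde\nabla \widetilde A_{ij}) + g^{ij}\bigl(P(\widetilde\nabla\widetilde A_{ij}) - \nabla A_{ij}\bigr).
\end{equation*}
Lemma \ref{metrics} controls $|g-\tilde g|$ by $|P\widetilde F_* - F_*|$, and the uniform equivalence (\ref{uniformly equivalence metric}) transfers this bound to $|g^{-1}-\tilde g^{-1}|$; together with $|P|=1$ from Proposition \ref{proposition bound P} this yields the remaining two terms $C|\widetilde\nabla\widetilde A||P\widetilde F_*-F_*| + C|P\widetilde\nabla\widetilde A - \nabla A|$. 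The only mild subtlety is bookkeeping the different connections (the two $D_t$'s on $N$- and $\widetilde N$-valued tensors, and the induced connection on $\widetilde N^*\otimes N$) so that the Leibniz identity is applied consistently; there is no serious analytic obstacle.
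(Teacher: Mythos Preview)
Your proof is correct and follows essentially the same route as the paper: the paper writes $D_t(P\widetilde F_i - F_i) = (D_tP)\widetilde F_i + P\widetilde\nabla_i\widetilde H - \nabla_i H$, expands the last two terms via $(\tilde g^{kl}-g^{kl})P\widetilde\nabla_i\widetilde A_{kl} + g^{kl}(P\widetilde\nabla_i\widetilde A_{kl} - \nabla_i A_{kl})$, and then invokes Proposition~\ref{proposition bound D_t P}, Lemma~\ref{metrics}, and \eqref{uniformly equivalence metric} exactly as you do.
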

\begin{proof}Recall that
\begin{align*}
D_t (P\widetilde F_i-F_i)
&=(D_tP)\widetilde F_i +PD_t\widetilde F_i -D_t F_i\\
&=(D_tP)\widetilde F_i +P \widetilde \nabla_i \widetilde H-\nabla_i H\\
&=(D_t P)\widetilde F_i+(\tilde g^{kl}-g^{kl})(P\widetilde\nabla_i \widetilde A_{kl})+g^{kl}(P\widetilde\nabla_i \widetilde A_{kl}-\nabla_i A_{kl}).
\end{align*}
Using Proposition \ref{proposition bound D_t P}, Lemma \ref{metrics} and (\ref{uniformly equivalence metric}), the result follows.
\end{proof}

To estimate the time derivative of $\Gamma- \widetilde \Gamma$. From (\ref{de of Gamma ijk}) we have
\begin{equation*}
\partial_t \Gamma = g^{-2}  * h(A , \nabla A),
\end{equation*}
Thus $|\partial_t (\Gamma- \widetilde \Gamma)|$ can be estimated as in the proof of Lemma \ref{lemma partial_t of difference of DF} and (\ref{difference of g bounded by difference of DF}). We skip the proof of the following lemma:

\begin{lem}
\begin{equation} \label{partial_t of difference of Gamma}
|\partial_t (\Gamma-\widetilde \Gamma)| \leq C|A| |\nabla A| |P\widetilde F_* - F_*|+ C |\nabla A| |P\widetilde A - A| + C |\widetilde A| |P\widetilde \nabla\widetilde A - \nabla A|.
\end{equation}
\end{lem}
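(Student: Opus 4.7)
The plan is to expand both $\partial_t \Gamma$ and $\partial_t \widetilde\Gamma$ using the contracted form $\partial_t \Gamma = g^{-2} \ast h(A, \nabla A)$ (and similarly for the tilde flow), and then replace $\widetilde A$ and $\widetilde\nabla\widetilde A$, which a priori live in different bundles over $\Sigma$ than $A$ and $\nabla A$, by their parallel transports $P\widetilde A$ and $P\widetilde\nabla\widetilde A$. Since $P$ is an isometry with respect to $h$, this substitution does not change the values of any $h$-inner products appearing in the expansion, so one obtains
\[
\partial_t(\Gamma - \widetilde\Gamma) \;=\; g^{-2}\ast h(A,\nabla A) \;-\; \tilde g^{-2}\ast h(P\widetilde A, P\widetilde\nabla\widetilde A).
\]

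From here I would follow exactly the bookkeeping used in the proof of Lemma \ref{lemma partial_t of difference of DF}: add and subtract terms to split the right-hand side into three telescoping pieces,
\[
(g^{-2}-\tilde g^{-2})\ast h(A,\nabla A) \;+\; \tilde g^{-2}\ast h(A - P\widetilde A, \nabla A) \;+\; \tilde g^{-2}\ast h(P\widetilde A, \nabla A - P\widetilde\nabla\widetilde A),
\]
using the uniform equivalence (\ref{uniformly equivalence metric}) to control $|g^{-1}|, |\tilde g^{-1}| \le C$. Then (\ref{difference of g bounded by difference of DF}) bounds $|g^{-2}-\tilde g^{-2}| \le C|P\widetilde F_\ast - F_\ast|$, which handles the first piece and produces the coefficient $C|A||\nabla A|$. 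The second piece directly contributes $C|\nabla A||P\widetilde A - A|$, and the third piece contributes $C|\widetilde A||P\widetilde\nabla\widetilde A - \nabla A|$ after using $|P\widetilde A| = |\widetilde A|$. Summing the three gives precisely the asserted estimate.

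There is no real obstacle here: the only subtlety is making sure the parallel transport is threaded through the $h$-inner products before splitting (so that $|P\widetilde A| = |\widetilde A|$ and the metric mismatch is isolated into a single $g^{-2}-\tilde g^{-2}$ factor), and this mirrors exactly what was done in Lemma \ref{lemma partial_t of difference of DF}. The calculation is short enough that the authors' choice to omit it is justified.
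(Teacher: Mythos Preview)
Your proposal is correct and is exactly the argument the paper indicates: use $\partial_t\Gamma = g^{-2}*h(A,\nabla A)$, thread the parallel transport $P$ through the $h$-inner products (since $P$ is an isometry), then telescope and apply (\ref{difference of g bounded by difference of DF}) just as in Lemma \ref{lemma partial_t of difference of DF}. The paper in fact skips the proof for this reason, so there is nothing to add.
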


Next let us consider the second order quantity
\begin{equation} \label{norm of P tilde A - A}
|P\widetilde A - A|^2= g^{ik} g^{jl} h( P\widetilde A_{ij} -A_{ij}, P\widetilde A_{kl} - A_{kl}).
\end{equation}

\begin{prop} \label{prop partial_t of difference of A}
We have the estimate
\begin{equation} \label{partial_t of difference of A}
\begin{split}
&\quad  \partial_t |P\widetilde A-A|^2-2h(P\widetilde\Delta \widetilde A-\Delta A,P\widetilde A-A)\\
&\leq \frac{1}{6}|P\widetilde \nabla\widetilde A-\nabla A|^2+ C ( (B_2^{loc})^2 + C_1^{loc} + |A|^2 + |\widetilde A|^2) d^2 \\
&\quad + C(|\widetilde A|^4 + |\widetilde \nabla\widetilde A|^2 + C_1^{loc} )|P\widetilde F_*  - F_*|^2 +C (|A|^2 + |\widetilde A|^2 +C_0^{loc} )|P\widetilde A - A|^2.
\end{split}
\end{equation}
\end{prop}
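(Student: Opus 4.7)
The plan is to differentiate $|P\widetilde A - A|^2 = g^{ik}g^{jl}h((P\widetilde A - A)_{ij}, (P\widetilde A - A)_{kl})$ by the Leibniz rule and match each arising term against the right-hand side of (\ref{partial_t of difference of A}). The contribution from $\partial_t g^{ij}$, via (\ref{de of g-1}), is of order $|A|^2|P\widetilde A - A|^2$, which lands in the last group of the target bound. The remaining piece is $2g^{ik}g^{jl}h(D_t(P\widetilde A - A)_{ij}, (P\widetilde A - A)_{kl})$, and the key algebraic move is the decomposition
\[
D_t(P\widetilde A - A) - (P\widetilde\Delta\widetilde A - \Delta A) = (D_tP)\widetilde A + \bigl(P\mathcal E_{\widetilde A} - \mathcal E_A\bigr),
\]
where $\mathcal E_A := (D_t - \Delta)A$ and $\mathcal E_{\widetilde A} := (D_t - \widetilde\Delta)\widetilde A$. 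The remainder of the proof is to bound $(D_tP)\widetilde A + P\mathcal E_{\widetilde A} - \mathcal E_A$ and pair it with $P\widetilde A - A$ via weighted Cauchy--Schwarz.

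For the first piece, Proposition \ref{proposition bound D_t P} together with $F_t = H$, $\widetilde F_t = \widetilde H$ and (\ref{difference of H bounded by difference of A}) give $|D_tP| \le C^{loc}_0 d\bigl(|A| + |\widetilde A|\,|P\widetilde F_* - F_*| + |P\widetilde A - A|\bigr)$, so that $|(D_tP)\widetilde A|\,|P\widetilde A - A|$ splits, via $2\alpha\beta\gamma \le \alpha^2 + \beta^2\gamma^2$ and $d<1$ from (\ref{assumption on d}), into contributions of types $d^2|A|^2$, $d^2|\widetilde A|^2$, $|\widetilde A|^4|P\widetilde F_* - F_*|^2$ and $|\widetilde A|^2|P\widetilde A - A|^2$, all of which sit in the target. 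For $P\mathcal E_{\widetilde A} - \mathcal E_A$, I apply (\ref{de of nabla^i A}) with $k = 2$, which expresses $\mathcal E_A$ as a finite sum of schematic terms
\[
A^{*3},\quad \bar R * (\nabla F)^a * g^{-b} * A,\quad \nabla A * A * g^{-2} * F_*,\quad \bar\nabla\bar R * (\nabla F)^{a+1} * g^{-b} * F_*,\quad \bar R * (\nabla F)^{a-1} * A * g^{-b} * F_*,
\]
and analogously for $\mathcal E_{\widetilde A}$. In each term I add and subtract so that $P\mathcal E_{\widetilde A} - \mathcal E_A$ becomes a sum in which exactly one factor is one of the basic differences $P\widetilde A - A$, $P\widetilde F_* - F_*$, $P\widetilde\nabla\widetilde A - \nabla A$, $g^{-1} - \tilde g^{-1}$, $P^*(\bar R|_{\widetilde F}) - \bar R|_F$ or $P^*(\bar\nabla\bar R|_{\widetilde F}) - \bar\nabla\bar R|_F$. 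Lemma \ref{lemma bound difference of ambient tensor} bounds the last two by $B^{loc}_1 d$ and $B^{loc}_2 d$ respectively; (\ref{difference of g bounded by difference of DF}) converts $|g^{-1} - \tilde g^{-1}|$ into $|P\widetilde F_* - F_*|$; and Lemma \ref{lemma DF bounded} controls the remaining $|F_*|$, $|\widetilde F_*|$ factors.

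To finish, each resulting scalar product is absorbed by weighted Cauchy--Schwarz. The weight is chosen so that the dominant third-order term $C|\widetilde A|\,|P\widetilde\nabla\widetilde A - \nabla A|\,|P\widetilde A - A|$ yields exactly $\tfrac{1}{12}|P\widetilde\nabla\widetilde A - \nabla A|^2 + C|\widetilde A|^2|P\widetilde A - A|^2$. Leftover $|\nabla A|$ factors coming from the $l=1$ slice of (\ref{de of nabla^i A}) are split using $|\nabla A| \le |P\widetilde\nabla\widetilde A - \nabla A| + |\widetilde\nabla\widetilde A|$, contributing another $\tfrac{1}{12}|P\widetilde\nabla\widetilde A - \nabla A|^2$ (which combines with the above to produce the coefficient $\tfrac{1}{6}$) together with a $|\widetilde\nabla\widetilde A|^2|P\widetilde F_* - F_*|^2$ piece that matches the second group. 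The curvature-derivative factor $B^{loc}_2 d$ is paired against itself to produce $(B^{loc}_2)^2 d^2$, and the $C^{loc}_1$ in the target absorbs the constants coming from $B^{loc}_0, B^{loc}_1$ via Cauchy--Schwarz. The main obstacle is this bookkeeping: one must verify that after all splittings, no triple product leaves a surplus $|\nabla A|$ or $|\widetilde\nabla\widetilde A|$ factor that cannot be absorbed into one of the four groups on the right-hand side of (\ref{partial_t of difference of A}), and that all $d^2$ weights line up with the correct curvature-derivative coefficients.
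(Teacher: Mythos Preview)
Your proposal is correct and follows essentially the same route as the paper: differentiate $|P\widetilde A-A|^2$, peel off the $(D_tP)\widetilde A$ piece via Proposition~\ref{proposition bound D_t P}, expand $P(D_t-\widetilde\Delta)\widetilde A-(D_t-\Delta)A$ using (\ref{de of nabla^i A}) with $k=2$, telescope each schematic term so that exactly one factor is a basic difference (using Lemma~\ref{lemma bound difference of ambient tensor} for the ambient curvature pieces and (\ref{difference of g bounded by difference of DF}) for $g^{-1}-\tilde g^{-1}$), and finish with weighted Cauchy--Schwarz. The paper organizes the four reaction terms as $(I)$--$(IV)$ but the underlying estimates and absorptions are the ones you describe.
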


\begin{proof}
From (\ref{norm of P tilde A - A}) and (\ref{de of g-1}) and proposition \ref{proposition bound D_t P},
\begin{align*}
\partial_t |P\widetilde A - A|^2 &= \partial_t\big( g^{ik} g^{jl} h( P\widetilde A_{ij} -A_{ij}, P\widetilde A_{kl} - A_{kl})\big) \\
&= 2 (\partial_t g^{ik}) g^{jl} h( P\widetilde A_{ij} -A_{ij}, P\widetilde A_{kl} - A_{kl})\big) \\
&\quad+ 2 g^{ik} g^{jl}h( D_t (P\widetilde A_{ij} -A_{ij}), P\widetilde A_{kl} - A_{kl})\\
&\le C|A|^2 |P\widetilde A - A|^2 + C^{loc}_0 d(|A|^2 + |\widetilde A|^2) |P\widetilde A - A| \\
&\quad + 2 g^{ik} g^{jl}h(  PD_t \widetilde A_{ij} -D_tA_{ij}, P\widetilde A_{kl} - A_{kl}).
\end{align*}
Now use (\ref{de of nabla^i A}) with $k=2$ to write
\begin{align*}
PD_t \widetilde A - D_t A = P\widetilde\Delta \widetilde A - \Delta A+(I)+(II)+(III)+(IV)
\end{align*}
where
\begin{align*}
(I)&= P(\tilde g^{-2}*\tilde h(\widetilde A,\widetilde A)*\widetilde A)- g^{-2}*h(A,A)*A;\\
(II)&=P(\tilde g^{-2}*\tilde h(\widetilde \nabla\widetilde A,\widetilde A)*\widetilde F_*)- g^{-2}*h(\nabla A,A)*F_*;\\
(III)&= P (\bar\nabla \bar R|_{\widetilde F} * (\widetilde F_*)^{a+2} * \tilde g^{-b}) - \bar\nabla \bar R|_{ F} * ( F_*)^{a+2} * g^{-b};\\
(IV)&=P (\bar R|_{\widetilde F} * (\widetilde F_*)^{a} *\widetilde A * \tilde g^{-b}) - \bar R|_{ F} * ( F_*)^{a} * A*g^{-b}.
\end{align*}
Using $\tilde h (\widetilde A, \widetilde A) = h(P\widetilde A, P\widetilde A)$, one has
\begin{align*}
(I)&= (g^{-1}-\tilde g^{-1})*\tilde g^{-1} * h(P\widetilde A,P\widetilde A)*P\widetilde A \\
&\quad + g^{-1}*(g^{-1}-\tilde g^{-1}) * h(P\widetilde A,P\widetilde A)*P\widetilde A  \\
&\quad + g^{-2}* \big( h(P\widetilde A-A,P\widetilde A)+ h(A, P\widetilde A - A)\big) *P\widetilde A \\
&\quad + g^{-2} * h(A, A) * (P\widetilde A - A)\\
\Rightarrow |(I)| &\le C |\widetilde A|^3 |P\widetilde F_* - F_*| + C (|A|^2 + |\widetilde A|^2) |P\widetilde A - A|.
\end{align*}
Similarly we have
\begin{align*}
|(II)| \le C |\widetilde A||\widetilde \nabla\widetilde A| |P\widetilde F_* - F_*| +C |\widetilde A| |P\widetilde A - A| + C|A||P\widetilde \nabla \widetilde A - \nabla A|.
\end{align*}
For $(III)$, note
$$ P (\bar\nabla \bar R|_{\widetilde F} * (\widetilde F_*)^{a+2} * \tilde g^{-b}) = (P^*\bar\nabla \bar R|_{\widetilde F} * (P\widetilde F_*)^{a+2} * \tilde g^{-b}).$$
Thus a similar calculation and lemma \ref{lemma bound difference of ambient tensor} give
\begin{align*}
|(III)| \le C B^{loc}_2 d + C^{loc}_1 |P\widetilde F_* - F_*|.
\end{align*}
Similar for $(IV)$ we have
\begin{align*}
|(IV)| \le C^{loc}_1 |\widetilde A| d + C^{loc}_0 |\widetilde A| |P\widetilde F_* - F_*| + C^{loc}_0 |P\widetilde A - A|.
\end{align*}
Therefore
\begin{align*}
\partial_t |P\widetilde A - A|^2 &\le 2h( (P\widetilde \Delta \widetilde A -\Delta A), P\widetilde A - A) + C_0^{loc} (|A|^2 + |\widetilde A|^2) d^2 \\
&\quad+ C(|A|^2 + |\widetilde A|^2) |P\widetilde A - A|^2 \\
&\quad + C \bigg[ (|\widetilde A||\widetilde\nabla\widetilde A| + |\widetilde A|^3 + C_1^{loc} + C_0^{loc} |\widetilde A|) |P\widetilde F_* - F_*|\\
&\quad + (|A|^2 + |\widetilde A|^2+ C_0^{loc} ) |P\widetilde A- A| + |A| |P\widetilde\nabla\widetilde A - \nabla A| \\
&\quad+ (C B_2^{loc} + C_1^{loc} |\widetilde A|)d\bigg] |P\widetilde A- A|.
\end{align*}
Now (\ref{partial_t of difference of A}) is obtained using Cauchy Schwarz inequalities.
\end{proof}

\section{Proof of theorem \ref{Uniqueness theorem}, theorem \ref{Uniqueness theorem for t^alpha bound} and theorem \ref{RicciFlow}}
In this section, we use the energy argument to prove the theorem \ref{Uniqueness theorem}, theorem \ref{Uniqueness theorem for t^alpha bound} and theorem \ref{RicciFlow}. 

To prove theorem \ref{Uniqueness theorem} using the energy method, we introduce the following energy quantity. By \cite{GreeneWu}, we can find $\rho\in C^\infty(M)$ such that $|\bar\nabla \rho|\leq 2$ and
$$d_M(\cdot,y_0)\leq \rho(\cdot)\leq d_M(\cdot,y_0)+1.$$
Now define
$$\mathcal{Q}=d^2+|\Gamma-\widetilde \Gamma|^2+\frac{\rho^{2-\epsilon}(F_0)}{t}|P\widetilde F_*-F_*|^2 + |P\widetilde A-A|^2, \ \ \ t>0.$$

\begin{lem} \label{partial_t of Q}
Under the assumption of the theorem \ref{Uniqueness theorem}, there exists $C_1$ such that on $F_0^{-1}\left(B_M(p,r) \right)$, where  $r>>1$,
$$\partial_t \mathcal{Q}\leq \frac{C_1r^{2-\epsilon}}{t}\mathcal{Q}+2 h(P\widetilde\Delta \widetilde A - \Delta A, P\widetilde A- A)+\frac{1}{2}|P\tilde \nabla\tilde A-\nabla A|^2.$$
\end{lem}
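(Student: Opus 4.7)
I would differentiate the four summands of $\mathcal{Q}$ in $t$ separately and bound each contribution by a combination of $\tfrac{Cr^{2-\epsilon}}{t}\mathcal{Q}$, the Laplacian-difference term $2h(P\widetilde\Delta\widetilde A-\Delta A,P\widetilde A-A)$, and a small multiple of $|P\widetilde\nabla\widetilde A-\nabla A|^2$. The raw derivative formulas are all in place: Lemma~\ref{lemma derivative of d} for $\partial_t d^2$, equation~(\ref{partial_t of difference of Gamma}) for $\partial_t|\Gamma-\widetilde\Gamma|^2$, Lemma~\ref{lemma partial_t of difference of DF} (combined with the product rule for the weight $\rho^{2-\epsilon}/t$) for the third summand, and Proposition~\ref{prop partial_t of difference of A} for $\partial_t|P\widetilde A-A|^2$. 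The critical pointwise inputs are the hypothesis $|A|^2+|\widetilde A|^2\le L\rho^{2-\epsilon}(F_0)/t$ and Lemma~\ref{lemma first order estimates}, which upgrades this to $|\nabla A|+|\widetilde\nabla\widetilde A|\le C_1 L\rho^{2-\epsilon}(F_0)/t$. The weight $\rho^{2-\epsilon}/t$ placed in the third summand of $\mathcal{Q}$ is chosen \emph{exactly} to match the bound on $|A|^2$, and this matching is what makes the whole computation close.

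Every cross term produced by the four derivative formulas has the schematic shape (curvature factor)$\cdot$(two difference factors). I would split by Young's inequality -- weighted by $\rho^{2-\epsilon}/t$ when necessary -- so that the curvature factor squared lands on one difference and produces $\rho^{2-\epsilon}/t$ (or its square), matching a summand of $\mathcal{Q}$. For example, $\partial_t d^2=2h(P\widetilde H-H,v)$ under MCF, and (\ref{difference of H bounded by difference of A}) produces the product $|\widetilde A|d|P\widetilde F_*-F_*|$; weighted Young gives $\tfrac{t|\widetilde A|^2}{\rho^{2-\epsilon}}d^2+\tfrac{\rho^{2-\epsilon}}{t}|P\widetilde F_*-F_*|^2\le Ld^2+\mathcal{Q}\le C\mathcal{Q}$. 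Equation~(\ref{partial_t of difference of Gamma}) yields terms such as $|A||\nabla A||P\widetilde F_*-F_*||\Gamma-\widetilde\Gamma|$, which I split as $\tfrac12|A|^2|\Gamma-\widetilde\Gamma|^2+\tfrac12|\nabla A|^2|P\widetilde F_*-F_*|^2$; the first is bounded by $\tfrac{C\rho^{2-\epsilon}}{t}|\Gamma-\widetilde\Gamma|^2\le\tfrac{Cr^{2-\epsilon}}{t}\mathcal{Q}$, and the second by $\tfrac{C(\rho^{2-\epsilon})^2}{t^2}|P\widetilde F_*-F_*|^2=\tfrac{C\rho^{2-\epsilon}}{t}\cdot\tfrac{\rho^{2-\epsilon}}{t}|P\widetilde F_*-F_*|^2\le\tfrac{Cr^{2-\epsilon}}{t}\mathcal{Q}$. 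For the weighted third summand, the product rule additionally generates a non-positive $-\tfrac{\rho^{2-\epsilon}}{t^2}|P\widetilde F_*-F_*|^2$, which I simply discard. Since $t\le T$ and $r\gg 1$, every residual $C\mathcal{Q}$ is absorbed into $\tfrac{Cr^{2-\epsilon}}{t}\mathcal{Q}$.

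\textbf{Main obstacle.} The only term not fully absorbed this way is the third-order factor $|P\widetilde\nabla\widetilde A-\nabla A|$, which appears linearly in both (\ref{partial_t of difference of Gamma}) and Lemma~\ref{lemma partial_t of difference of DF}. Each such occurrence I split by Young with a small parameter $\delta$, contributing $\delta|P\widetilde\nabla\widetilde A-\nabla A|^2$ plus a routine remainder that fits into the $\tfrac{Cr^{2-\epsilon}}{t}\mathcal{Q}$ bound. Proposition~\ref{prop partial_t of difference of A} applied to $|P\widetilde A-A|^2$ directly yields the Laplacian-difference term and the baseline $\tfrac16|P\widetilde\nabla\widetilde A-\nabla A|^2$; its remaining polynomial coefficients $|\widetilde A|^4+|\widetilde\nabla\widetilde A|^2$ and $|A|^2+|\widetilde A|^2$ satisfy the pointwise bounds required to recast them, after pairing with the weight in $\mathcal{Q}$, as $\tfrac{Cr^{2-\epsilon}}{t}\mathcal{Q}$. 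Choosing $\delta$ sufficiently small, depending only on the dimensions, keeps the total coefficient of $|P\widetilde\nabla\widetilde A-\nabla A|^2$ strictly below $\tfrac12$, yielding the stated inequality.
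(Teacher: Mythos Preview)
Your proposal is correct and follows essentially the same route as the paper: differentiate each summand of $\mathcal{Q}$, invoke Lemma~\ref{lemma derivative of d}, (\ref{partial_t of difference of Gamma}), Lemma~\ref{lemma partial_t of difference of DF}, and Proposition~\ref{prop partial_t of difference of A}, then absorb cross terms via weighted Young/Cauchy--Schwarz using the hypothesis $|A|^2+|\widetilde A|^2\le Lr^{2-\epsilon}/t$ and Lemma~\ref{lemma first order estimates}. The paper allocates $\tfrac16$ of the third-order term to each of the three sources (yielding exactly $\tfrac12$), whereas you use a small-$\delta$ splitting; and the paper explicitly records the step $(B_2^{loc})^2\le C(1+r^{2-\epsilon})$ for the $d^2$-coefficient coming from Proposition~\ref{prop partial_t of difference of A}, which you should mention but which fits your scheme with no change.
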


\begin{proof}
Recall that from lemma \ref{lemma derivative of d}, (\ref{difference of H bounded by difference of A}) and (\ref{assumption on A}), we have
\begin{align*}
\partial_t d^2 \leq Cd(|\widetilde A| |P\widetilde F_*  - F_*| + |P\widetilde A - A|) \le \frac{Cr^{2-\epsilon}}{t} \mathcal{Q},
\end{align*}
On the other hand, using (\ref{partial_t of difference of DF}), (\ref{partial_t of difference of Gamma}) and (\ref{assumption on A}), (\ref{first order estimates}) give
\begin{align*}
\partial_t |\widetilde \Gamma-\Gamma|^2
&\leq C|\widetilde \Gamma-\Gamma|\left[|A||\nabla A||P\widetilde F_*-F_*|+|\nabla A||P\widetilde A-A|+|\widetilde A||P\widetilde\nabla\widetilde A-\nabla A|\right]\\
&\quad +C|A|^2|\widetilde \Gamma-\Gamma|^2\\
&\leq \frac{C_1r^{2-\epsilon}}{t}\mathcal{Q}+\frac{1}{6}|P\widetilde \nabla\widetilde A-\nabla A|^2
\end{align*}
and
\begin{align*}
\frac{\partial}{\partial t} \left(\frac{\rho^{2-\epsilon}}{t}|P\widetilde F_*-F_*|^2\right)
&\leq \frac{C_1r^{2-\epsilon}}{t}\mathcal{Q}+\frac{1}{6}|P\widetilde \nabla\widetilde A-\nabla A|^2.
\end{align*}
Lastly, using $(B_2^{loc})^2 \le C (1+ r^{2-\epsilon})$ for $r>>1$ and (\ref{partial_t of difference of A}), we have
$$ \partial_t |P\widetilde A-  A|^2 \le 2 h(P\widetilde\Delta \widetilde A - \Delta A, P\widetilde A- A) + C_1 \frac{r^{2-\epsilon}}{t} \mathcal{Q} + \frac{1}{6} |P\widetilde \nabla\widetilde A- \nabla A|^2$$
and the lemma is proved.
\end{proof}

\begin{proof}[Proof of theorem \ref{Uniqueness theorem}]
For each $r>>1$, let $\phi(x)=\varphi^p(\rho(F_0)/r)$ where $\varphi$ is smooth, equals $1$ on $[0,1/2]$, vanishes outside $[0,1]$ and satisfies $0\leq -\varphi'\leq 10$. Here $p$ possibly depends on $r$. Let $\eta(x,t)= \frac{[\rho (F_0)]^2}{a-bt}$. For $t\in (0,a/(2b)]$, $\eta(x,t)\geq \rho^2/a$. Moreover,
\begin{align*}
\partial_t \eta= \frac{b}{(a-bt)^2} \rho^2 \geq \frac{b}{8n\lambda} |\nabla \eta|^2.
\end{align*}
Here $a$ and $b$ are some constants to be fixed later and $\lambda$ is a constant such that $g(t)\geq \lambda^{-1} g_0$. Now for $t\in (0,a/2b]$, define the energy $E_r(t)$ as
\begin{align*}
E_r(t)=\int_\Sigma \mathcal{Q}\xc\,d\mu,\;\;\text{for}\;t>0.
\end{align*}
Note that the above is well defined, since $\phi$ is of compact support, while (\ref{F C^0  close to F_0}) and the properness of $F_0$ together imply that $F(t,\cdot)$ is also proper. From lemma \ref{partial_t of Q} and $\partial_t d\mu \le 0$ by (\ref{de of dmu}),
\begin{align*}
 \frac{d}{dt}E_r(t)
&\leq -\int_\Sigma \mathcal{Q} \xc \partial_t\eta\,d\mu+\frac{C_1r^{2-\epsilon}}{t}E_r(t)+ \int_\Sigma |P\widetilde \nabla \widetilde A  -\nabla A|^2 \xc d\mu\\
&\quad+2\int_\Sigma  h(P\widetilde \Delta \widetilde A - \Delta A, P\widetilde A - A)\xc \; d \mu.
\end{align*}
We focus on the term containing the Laplacians. Using (\ref{bound nabla P}), lemma \ref{lemma first order estimates} and Cauchy Schwarz inequality,
\begin{align*}
&\quad 2\int_\Sigma  h(P\widetilde \Delta \widetilde A - \Delta A, P\widetilde A - A)\xc \; d\mu \\
&=2\int_\Sigma h\left( P \tilde g^{ij} \widetilde\nabla_i\widetilde \nabla_j \widetilde A - g^{ij} \nabla_i \nabla_j A, P\widetilde A - A\right)\xc \; d\mu\\
&=2 \int_\Sigma h\left(  -\tilde g^{ij} (\nabla_i P) \widetilde \nabla_j \widetilde A +\widetilde\nabla_i (P\tilde g^{ij} \widetilde \nabla_j \widetilde A)- g^{ij} \nabla_i \nabla_j A, P\widetilde A - A\right) \xc \; d\mu \\
&\le C_0 \int_\Sigma d |\widetilde \nabla \widetilde A| |P\widetilde A - A| \xc \; d\mu +  2\int_\Sigma h\left( \nabla_i \big(P\tilde g^{ij}\widetilde \nabla_j \widetilde A- g^{ij}  \nabla_j A\big), P\widetilde A - A\right) \xc \; d\mu\\
&\quad +C \int_\Sigma |\Gamma-\widetilde \Gamma| |\widetilde \nabla \widetilde A| |P\widetilde A - A| \xc \; d \mu \\
&\le \frac{C_1r^{2-\epsilon}}{t} E_r(t) +   2\int_\Sigma h\left( \nabla_i \big(P\tilde g^{ij}\widetilde \nabla_j \widetilde A- g^{ij} \nabla_j A\big), P\widetilde A - A\right) \xc \; d\mu.
\end{align*}
Now we use integration by part to the second term on the right hand side to obtain
\begin{align*}
&\quad  2\int_\Sigma h\left( \nabla_i \big(P\tilde g^{ij}\widetilde \nabla_j \widetilde A- g^{ij} \nabla_j A\big), P\widetilde A - A\right) \xc \; d\mu \\
&= -2  \int_\Sigma h\left( P\tilde g^{ij}\widetilde \nabla_j \widetilde A- g^{ij} \nabla_j A, (\nabla_i P)\widetilde A\right) \xc \; d\mu\\
&\quad -2 \int_\Sigma h\left( P\tilde g^{ij}\widetilde \nabla_j \widetilde A- g^{ij} \nabla_j A, P(\nabla_i \widetilde A) -\nabla_i A\right) \xc \; d\mu\\
&\quad -2  \int_\Sigma h\left( P\tilde g^{ij}\widetilde \nabla_j \widetilde A- g^{ij} \nabla_j A,  P\widetilde A - A\right) \nabla_i(\xc) \; d\mu \\
&= (A) +(B) +(C).
\end{align*}
For the first two terms, we use again
\[ \tilde g^{-1} = \tilde g^{-1} - g^{-1} + g^{-1} , \ \ \ \nabla_i  = \nabla_i - \widetilde \nabla_i + \widetilde \nabla_i,\]
proposition \ref{proposition bound P} and Cauchy Schwarz inequality to get
\begin{equation*}
(A) +(B) \le \frac{C_1r^{2-\epsilon}}{t} E_r(t) - \int_\Sigma |P\widetilde \nabla\widetilde A - \nabla A|^2 \xc \; d\mu.
\end{equation*}
For (C) we have similarly
\begin{align*}
(C)&\leq \frac{1}{2}\int_\Sigma |P\widetilde\nabla \widetilde A-\nabla A|^2e^{-\eta}\phi d\mu+\frac{C_1r^{2-\epsilon}}{t}E_r(t)\\
&\quad +C\int_\Sigma |P\widetilde A-A|^2e^{-\eta} \left[\frac{|\nabla\phi|^2}{\phi}+\phi |\nabla\eta|^2\right]d\mu.
\end{align*}
Combine all these,
\begin{equation} \label{final estimates of partial_t E_R}
\begin{split}
\frac{d}{dt} E_r(t)&\leq \frac{C_1r^{2-\epsilon}}{t}E_r(t)+\int_\Sigma \mathcal{Q}\xc \left[-\frac{\partial}{\partial t}\eta +\tilde C|\nabla\eta|^2\right]\,d\mu \\
&\quad+C\int_{F_0^{-1}(A_{y_0}(r/2,r))} |P\widetilde A-A|^2e^{-\eta}\frac{|\nabla\phi|^2}{\phi}d\mu,
\end{split}
\end{equation}
where $A_{y_0} (r/2,r) \subset M$ is the annulus centred at $y_0$ and $\tilde C$ is a fixed constants depending only on the dimensions of $\Sigma, M, \lambda$ and $L$. To estimate the last term on the right hand side of (\ref{final estimates of partial_t E_R}), note
\begin{align*}
\frac{|\nabla \phi|^2}{\phi} \leq \frac{Cp^2 }{r^2}\phi ^{1-2/p}.
\end{align*}
By Young's inequality, (\ref{assumption on A}), $\partial_t d\mu\leq 0$ and (\ref{assumption on volume growth of F_0}), we have
\begin{align*}
&\quad \frac{Cp^2}{r^2}\int_{F_0^{-1}(A_{y_0}(r/2,r))} |P\widetilde A-A|^2e^{-\eta}\phi^{1-2/p}d\mu\\
&\leq \frac{Cp^2}{r^2}E_r^{1-2/p}\left(\int_{F_0^{-1}(A_{y_0}(r/2,r))}|P\widetilde A-A|^2e^{-\eta}d\mu\right)^{2/p}\\
&\leq \frac{r^{2-\epsilon}}{t}E_r+\frac{C^{p/2}p^{p}t^{p/2-1}}{r^{p+(2-\epsilon)(p/2-1)}}\cdot \left(\int_{F_0^{-1}(A_{y_0}(r/2,r))}(|A|^2+|\widetilde A|^2)e^{-\eta}d\mu\right)\\
&\leq  \frac{r^{2-\epsilon}}{t}E_r+ \frac{C^{p/2}p^{p}t^{p/2-2}}{r^{p+(2-\epsilon)(p/2-2)}} e^{-r^2/a} \cdot \int_{F_0^{-1}(A_{y_0}(r/2,r))} d\mu\\
&\leq  \frac{r^{2-\epsilon}}{t}E_r+ \frac{C^{p/2}p^{p}t^{p/2-2}}{r^{p+(2-\epsilon)(p/2-2)}} e^{-r^2/a} \cdot \int_{F_0^{-1}(A_{y_0}(r/2,r))} d\mu_0\\
&\leq \frac{r^{2-\epsilon}}{t}E_r+\frac{DC^{p/2}p^{p}t^{p/2-2}}{r^{p+(2-\epsilon)(p/2-2)}}e^{-r^2/a+Dr^2}.
\end{align*}

Now we require that $a,b$ satisfy $a^{-1}\geq 2D$ and $b/(8n\lambda)\geq \tilde C$. Therefore, the differential inequality for the energy quantity reduces to
\begin{align*}
\frac{d}{dt} E_r(t)\leq \frac{C_1r^{2-\epsilon}}{t}E_r(t)+D(\sqrt C p)^p t^{p/2-2} e^{-r^2/(2a)}.
\end{align*}
For each $r>>1$, from now on we consider $C_1, C$ as fixed constants and write $\alpha=C_1r^{2-\epsilon}>0$. Solve the above ode on $0<s<t<a/(2b)$:
\begin{align*}
\frac{E_r(t)}{t^\alpha}\leq \frac{E_r(s)}{s^\alpha}+D(\sqrt Cp)^p e^{-r^2/(2a)}\int^t_s x^{p/2-2-\alpha}\,dx
\end{align*}

At each $r>>1$, choose $p=2(2+\alpha) = 2(2+ C_1r^{2-\epsilon})$. For $r$ large (depending only on $C, D, \epsilon, a$) we have $e^{r^2/(4a)}>D(\sqrt Cp)^p$. Hence,
\begin{equation} \label{integrating the differential ineq}
\frac{E_r(t)}{t^\alpha}\leq \frac{E_r(s)}{s^\alpha}+\frac{a}{2b} e^{-r^2/(4a)}
\end{equation}
for large enough $r$. By theorem \ref{D_t^k P =0 at t=0} and the MCF equation, since the convergence $F(t,\cdot), \widetilde F(t,\cdot) \to F_0(\cdot)$ are smooth, $\mathcal{Q}^{(m)}(0)=0$ for any $m\in\mathbb{N}$. Since $F_0$ is proper and $M$ is complete, $F_0^{-1}(B_M(y_0,2r))$ is a compact set and we may apply the dominated convergent theorem to conclude that
$$\lim_{s\rightarrow 0}\frac{E_r(s)}{s^\alpha}=0.$$
Followed by letting $r\rightarrow \infty$ in (\ref{integrating the differential ineq}), we have $\mathcal Q \equiv 0$ for all $t\in [0,a/(2b)]$, in particular $F=\widetilde F$ in $[0,a/2b]$. Extension to the whole interval $[0,T]$ follows from an open-closed argument and this finishes the proof of theorem \ref{Uniqueness theorem}. 
\end{proof}

Next we prove theorem \ref{Uniqueness theorem for t^alpha bound}.

\begin{proof}[Proof of theorem \ref{Uniqueness theorem for t^alpha bound}]
In this situation, we observe that conditions (1) and (2) in theorem \ref{Uniqueness theorem} hold since $t^{-2\alpha}$ is integrable in $[0,T]$. Thus all the calculations in the sections 3, 4, and 5 can be applied. Let
$$ \mathcal Q^\alpha = d^2 + |\Gamma - \widetilde \Gamma|^2 + t^{-2\alpha} |P\widetilde F_* - F_*|^2 + |P\widetilde A - A|^2. $$
In this situation, one uses an intrinsic cutoff function: Let $\rho = \rho_T \in C^\infty (\Sigma)$ be an exhaustion of $(\Sigma, g_T)$ so that for some $x_0\in \Sigma$,
$$  d_{g_T} (x,x_0) \le \rho(x) \le  d_{g_T} (x,x_0)+1, \ \ \ |\nabla \rho |\le 2.$$
Let $\phi$, $\eta$ be defined as in the proof of theorem \ref{Uniqueness theorem} with this new $\rho$ and let
$$ E^\alpha_r(t) = \int_\Sigma \mathcal Q^\alpha \xc d\mu.$$
The assumption $|A|  + |\widetilde A| \le C/t^\alpha$ implies the estimates $|\nabla A| + |\widetilde \nabla\widetilde A| \le C_1/t^{2\alpha}$. Arguing as in the proof of theorem \ref{Uniqueness theorem}, we have for $r>>1$ and $b/(8n\lambda) \ge \tilde C$,
\begin{align*}
\frac{d}{dt} E^\alpha_r(t) \le \frac{C_2}{t^{2\alpha}} E^\alpha_r(t) + C \int_{F_0^{-1} A_{y_0} (r/2,r)} |P\widetilde A - A|^2 e^{-\eta} \frac{|\nabla \phi|^2}{\phi} d\mu.
\end{align*}
Using the assumption on $|A|, |\widetilde A|$, (\ref{uniformly equivalence metric}) and pick $p=2$, we have
\begin{align*}
\frac{d}{dt} E_r^{\alpha} (t) \le \frac{C_2}{t^{2\alpha}} \bigg( E_r^\alpha(t) + e^{-ar^2} \text{Vol}_{g_T} (B_r(x_0))\bigg).
\end{align*}
From the Gauss equation and the assumptions on $A$, $(\Sigma, g_T)$ has bounded curvature, thus the volume comparison theorem gives
$$ \text{Vol}_{g_T} (B_r(x_0)) \le D e^{Dr}$$
for some $D = D(n,m,T, B_0)$. Choosing $a^{-1} \ge 2D$,
\begin{align*}
\frac{d}{dt} E^\alpha_r(t)\le \frac{C_2}{t^{2\alpha}}\left( E^\alpha_r(t) + e^{-r^2/2a}\right).
\end{align*}
Since the convergence $F(t, \cdot), \widetilde F(t, \cdot) \to F_0(\cdot)$ is $C^3$, $E_r(t)$ is continuous at $t=0$ and $E_r(0) = 0$. Integrating the above inequality (note $t^{-2\alpha}$ is integrable) gives
$$E_r(t) \le \left( e^{\frac{C_2}{1-2\alpha} t^{1-2\alpha}} -1\right) e^{-r^2 /2a}.$$
Let $r\to \infty$ gives $\mathcal Q^\alpha = 0$ for all $t\in [0,a/2b]$. Thus $F = \widetilde F$ in $[0,a/2b]$ and the theorem follows from iterating the argument.
\end{proof}

Using the above cutoff technique and the argument in the proof of theorem \ref{Uniqueness theorem}, we sketch how one can prove theorem \ref{RicciFlow}.

\begin{proof}[Sketch of proof of theorem \ref{RicciFlow}]

We argue using similar argument in \cite{Kotschwar}. Define the energy to be
$$E_R(t)=\int_M \left(t^{-2}|g-\tilde g|^2+t^{-1}|\Gamma-\tilde \Gamma|^2+|Rm-\widetilde{Rm}|^2\right)\phi e^{-\eta}\,d\mu_{g(t)}.$$
Here we choose the cutoff function and exhaustion function as in the proof of theorem \ref{Uniqueness theorem}: $\phi(x)=\phi(\rho(x)/R)$ and $\eta(x,t)=\frac{\rho(x)^2}{a-bt}$ where $\rho$ is a smooth function on $M$ such that 
$$d_0(x,x_0)\leq \rho(x)\leq d_0(x,x_0)+1\quad\text{and}\quad |\nabla^{g_0} \rho|\leq 2$$
for some $x_0\in M$. By volume comparison and equivalence of metrics, we know that
$$V_t(B_0(p,R))\leq V_T(B_T(p,CR))\leq C'e^{C'R}.$$
Using integration by part, we obtain a evolution inequality of $E_R$ which is in the same form as before.
$$E_R'(t)\leq \frac{L}{t}E_R(t)+C_n\int_M \frac{|\nabla\phi|^2}{\phi}|Rm-\widetilde{Rm}|^2 e^{-\eta}\,d\mu$$
for some $L=L(n,\lambda)$. We can now employ the same trick in the proof of theorem \ref{Uniqueness theorem} to conclude that $g(t)=\tilde g(t)$ for all $t\in [0,T]$.
\end{proof}


\section{Backward Uniqueness}
In this section, we modify a general backward uniqueness result in \cite{Kotschwar4} to prove theorem \ref{Backward uniqueness theorem}. When the ambient space is Euclidean, similar results were obtained in \cite{Huang} in co-dimension one case and \cite{Zhang} in arbitrary co-dimension. However, the issue of parallel transport is not addressed in \cite{Huang}, \cite{Zhang} when the ambient space is not Euclidean.

To start the proof, let $F, \widetilde F : [0,T]\times \Sigma \to M$ be two MCFs with uniformly bounded second fundamental forms $|A|+ |\widetilde A|\le C$ and $F = \widetilde F$ at time $T$. To show backward uniqueness, it suffices to show that $F = \widetilde F$ on $[1/l, T]$ for all $l\in \mathbb N$. Now consider $l$ as fixed number. By theorem 3.2 in \cite{ChenYin}, we have
\begin{equation*}
|\nabla^k A|+|\widetilde\nabla^k\widetilde A| \le C_{k+1}, \ \ \ k=0,1,2,\cdots  \text{ and } t\in [1/l,T].
\end{equation*}

Consider two (time-dependent) vector bundles over $\Sigma$:
\begin{equation*}
\mathcal X = (T^{0,2}\Sigma \otimes N) \oplus (T^{0,3}\Sigma \otimes N) , \ \ \ \mathcal Y =N \oplus (T^{0,1} \Sigma \otimes N)\oplus T^{1,2}\Sigma \oplus T^{1,3}\Sigma.
\end{equation*}
We use the metric induced from $g$ and $h$ and the direct sums are orthogonal. Define the following time covariant derivatives on $\mathcal X$ and $\mathcal Y$ respectively: 
$$D_t ^{\mathcal X} = D_t \oplus D_t , \ \ \ D_t^{\mathcal Y} = D_t \oplus D_t \oplus \partial_t \oplus \partial_t.$$
Consider the following two sections $X, Y$ on $\mathcal X,\mathcal Y$ respectively:
\begin{equation*}
X =  (P\widetilde A - A)\oplus (P\widetilde \nabla \widetilde A- \nabla A), \ \ \ Y= v \oplus (P\widetilde F_* - F_*)\oplus (\Gamma - \widetilde \Gamma) \oplus \nabla (\Gamma - \widetilde \Gamma),
\end{equation*}
where $v$ is defined in section 3. Theorem \ref{Backward uniqueness theorem} follows from the following

\begin{thm} \label{theorem PDE-ODE ineq}
There are constants $C_4$ so that
\begin{align}
\label{PDE-ODE ineq 1} |(D_t^{\mathcal X} - \Delta)X| &\le C_4(|X| + |\nabla X|+ |Y|), \\
\label{PDE-ODE ineq 2} |D_t^{\mathcal Y} Y| &\le C_4(|X| + |\nabla X| + |Y|).
\end{align}
\end{thm}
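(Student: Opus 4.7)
The plan is to verify both inequalities component-by-component, reusing the machinery of sections 3--5 together with the evolution equation (\ref{de of nabla^i A}). The work takes place on $[1/l,T]$, where by theorem 3.2 of \cite{ChenYin} the Shi-type estimates $|\nabla^k A|+|\widetilde\nabla^k\widetilde A|\le C_{k+1}$ hold, so that every coefficient built from $A$, $\widetilde A$ and their bounded number of covariant derivatives is a fixed constant; likewise $d=|v|$, $|H|$, $|F_t|$ and $|\bar R|,\ldots,|\bar\nabla^4\bar R|$ are uniformly bounded, so there is no difficulty controlling ``coefficient'' factors.

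For the ODE inequality (\ref{PDE-ODE ineq 2}), the bounds on $|D_t v|$, $|D_t(P\widetilde F_*-F_*)|$ and $|\partial_t(\Gamma-\widetilde\Gamma)|$ follow directly from proposition \ref{proposition estimate D_t v, Dv} combined with (\ref{difference of H bounded by difference of A}), from lemma \ref{lemma partial_t of difference of DF}, and from (\ref{partial_t of difference of Gamma}), each already in the form $C(|X|+|Y|)$. The only new piece is $\partial_t\nabla(\Gamma-\widetilde\Gamma)$: the plan is to differentiate (\ref{de of Gamma ijk}) once spatially, commute $\partial_t$ past $\nabla$ (the commutator contributes bounded lower-order terms since $\partial_t\Gamma\sim A\cdot\nabla A$), and subtract the analogous identity for $\widetilde\Gamma$. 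Each replacement of $\widetilde\nabla$ by $\nabla$ and of $\tilde g^{-1}$ by $g^{-1}$ produces factors of $|\Gamma-\widetilde\Gamma|$ and $|P\widetilde F_*-F_*|$ (both components of $|Y|$), while the remaining difference of second-order derivative terms is rewritten as $\nabla(P\widetilde\nabla\widetilde A-\nabla A)+(\nabla P)\widetilde\nabla\widetilde A+P(\Gamma-\widetilde\Gamma)\ast\widetilde\nabla\widetilde A$, bounded by $C(|\nabla X|+|Y|)$ via proposition \ref{proposition bound P}.

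For the PDE inequality (\ref{PDE-ODE ineq 1}), the first component $P\widetilde A-A$ is essentially a pointwise vector version of proposition \ref{prop partial_t of difference of A}: using $\Delta(P\widetilde A)=(\Delta P)\widetilde A+2g^{ij}(\nabla_i P)(\nabla_j\widetilde A)+P(g^{ij}\nabla_i\nabla_j\widetilde A)$, converting $g^{ij}\nabla_i\nabla_j\widetilde A$ into $\widetilde\Delta\widetilde A$ at the cost of controlled $Y$-terms, and invoking propositions \ref{proposition bound P} and \ref{proposition bound D_t P} together with lemma \ref{lemma bound difference of ambient tensor}, the evolution identity (\ref{de of nabla^i A}) at $k=2$ combined with its parallel-transported counterpart for $\widetilde A$ leads directly to $|(D_t-\Delta)(P\widetilde A-A)|\le C(|X|+|\nabla X|+|Y|)$.

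The main obstacle, and essentially the only genuinely new computation, is the second component $(D_t-\Delta)(P\widetilde\nabla\widetilde A-\nabla A)$. The relevant evolution is (\ref{de of nabla^i A}) at $k=3$, yielding $(D_t-\Delta)\nabla A$ as a sum of $A\ast A\ast\nabla A$, $\bar\nabla^2\bar R\ast(\nabla F)^a\ast g^{-1}$-type, $\bar\nabla\bar R\ast A\ast\nabla F$-type, and similar bounded-coefficient terms. The plan is to compute the analogous identity for $P\widetilde\nabla\widetilde A$ by expanding $D_t P$, $\nabla P$, $\Delta P$ through propositions \ref{proposition bound P} and \ref{proposition bound D_t P}, take the difference, and carry out the same $(\tilde g^{-1},\widetilde\nabla)\leadsto(g^{-1},\nabla)$ conversions as above. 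The full bound on $|\bar\nabla^i\bar R|$ up to $i=4$ enters precisely here: lemma \ref{lemma bound difference of ambient tensor} applied to $\bar\nabla^2\bar R|_{\widetilde F}-\bar\nabla^2\bar R|_F$ costs one derivative (requiring $\bar\nabla^3\bar R$), and a further spatial derivative is picked up when comparing $\widetilde\nabla$- versus $\nabla$-derivatives of the ambient-tensor restrictions inside the parabolic right-hand side (requiring $\bar\nabla^4\bar R$). Because of the shared parabolic structure of $\nabla A$ and $P\widetilde\nabla\widetilde A$, all terms involving two or more spatial derivatives of $X$ cancel, and what remains is absorbed into $C_4(|X|+|\nabla X|+|Y|)$.
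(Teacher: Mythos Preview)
Your proposal is correct and follows essentially the same route as the paper: handle the $v$, $P\widetilde F_*-F_*$, and $\Gamma-\widetilde\Gamma$ components of $Y$ via the estimates already proved in sections 3--5; treat $\partial_t\nabla(\Gamma-\widetilde\Gamma)$ by commuting $\partial_t$ past $\nabla$ and differencing, then rewrite the resulting $P\widetilde\nabla^2\widetilde A-\nabla^2 A$ as $\nabla(P\widetilde\nabla\widetilde A-\nabla A)$ plus lower-order pieces; and for each $X$-component expand $(D_t-\Delta)(P\,\cdot)$ by Leibniz, estimate $(D_t-\Delta)P$ and $\nabla P$ via propositions \ref{proposition bound P}--\ref{proposition bound D_t P}, convert $\Delta$ to $\widetilde\Delta$ at the cost of $Y$-terms (this is precisely why $\nabla(\Gamma-\widetilde\Gamma)$ must sit inside $Y$), and difference the reaction terms from (\ref{de of nabla^i A}).

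One small correction to your bookkeeping: the bound on $\bar\nabla^4\bar R$ does not enter through an extra derivative on ambient-tensor differences as you suggest. In the $k=3$ version of (\ref{de of nabla^i A}) the highest ambient derivative appearing is $\bar\nabla^2\bar R$, so lemma \ref{lemma bound difference of ambient tensor} only costs $\bar\nabla^3\bar R$. The genuine source of $C_4$ is the $\Delta\to\widetilde\Delta$ conversion for the $P\widetilde\nabla\widetilde A$ component: the term $(g^{-1}-\tilde g^{-1})\ast\widetilde\nabla^2(\widetilde\nabla\widetilde A)$ requires the Shi-type bound $|\widetilde\nabla^3\widetilde A|\le C_4$, and it is this estimate from \cite{ChenYin} that needs $|\bar\nabla^4\bar R|$ bounded. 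This does not affect the validity of your argument, only the attribution.
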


\begin{proof}
First we estimate $\partial_t (\nabla (\Gamma - \widetilde \Gamma))$. We remark that for any $(p,q)$ tensors $S$ on $\Sigma$, we have
$$ (\partial_t \nabla - \nabla \partial_t )S = (\partial_t \Gamma) *S.$$
Then we have the estimates
\begin{align*}
|\partial_t \nabla (\Gamma - \widetilde \Gamma)| &= |\nabla (\partial_t \Gamma - \partial_t \widetilde \Gamma)| + C_2|\Gamma - \widetilde \Gamma| \\
&= \left|\nabla\big[ g^{-2} * h(A, \nabla A) - \tilde g^{-2} * \tilde h (\widetilde A, \widetilde \nabla \widetilde A)\big]\right| +C_2|\Gamma - \widetilde \Gamma| \\
&\le |g^{-2} * (h (\nabla A, \nabla A) + h(A, \nabla^2 A) )- \tilde g^{-2}  *( h (P\widetilde \nabla \widetilde A, P\widetilde \nabla \widetilde A)| \\
&\quad + h (P\widetilde A, P\widetilde \nabla ^2\widetilde A))+ C_2 |\Gamma - \widetilde \Gamma| \\
&\le C_2(|\Gamma - \widetilde \Gamma| +|P\widetilde F_* - F_*|+ |P\widetilde A -A| + |P\widetilde \nabla\widetilde A - \nabla A| + |P\widetilde \nabla^2 \widetilde A - \nabla^2 A|)\\
&\le  C_2(|\Gamma - \widetilde \Gamma| +|P\widetilde F_* - F_*|+ |P\widetilde A -A| + |P\widetilde \nabla\widetilde A - \nabla A| \\
&\quad + |\nabla(P\widetilde \nabla \widetilde A - \nabla A)|+ |v|) \\
&\le C_2 (|X| + |\nabla X| + |Y|).
\end{align*}
The above inequality together with (\ref{bound D_t v}), (\ref{partial_t of difference of DF}) and (\ref{partial_t of difference of Gamma}) give us (\ref{PDE-ODE ineq 2}). To derive (\ref{PDE-ODE ineq 1}), note that for any $k$,
\begin{align*}
(D_t - \Delta) (P\widetilde \nabla^k \widetilde A - \nabla^k A) &= ((D_t-\Delta )P) \widetilde\nabla^k \widetilde A - 2g^{ij} (\nabla_i P)(\nabla_j \widetilde\nabla^k \widetilde A) \\
&\quad + P( (D_t - \Delta)\widetilde \nabla^k \widetilde A) - (D_t - \Delta)\nabla^k A.
\end{align*}
The first two terms on the right hand side is estimated using (\ref{derivative estimates of P}), (\ref{Delta P}) and  (\ref{bound nabla P}):
\begin{align*}
| ((D_t - \Delta )P ) \widetilde \nabla^k \widetilde A|&\le C_1|\widetilde\nabla^k \widetilde A|[ (|A|+|\widetilde A|) |v| + \tilde P(|A|) |v| + \\
&\quad |P\widetilde F_* -F_*| + |\Gamma - \widetilde \Gamma| + |P\widetilde A-A|] \\
&\le C_{k+1} (|X| + |Y|), \\
|2g^{ij} (\nabla _i P) (\nabla_j \widetilde\nabla^k \widetilde A)| &\le C_{k+2}|Y|.
\end{align*}
To estimate the third term we use
$$\Delta -\widetilde \Delta = g^{-1} * \nabla(\Gamma - \widetilde \Gamma)  + g^{-1}* (\Gamma- \widetilde \Gamma)* \widetilde \nabla + (g^{-1} - \tilde g^{-1})* \widetilde\nabla^2 $$
and get
\begin{align*}
|(D_t - \Delta) (P\widetilde \nabla^k \widetilde A - \nabla^k A)| &\le |P(D_t -\widetilde \Delta)\widetilde\nabla^k \widetilde A - (D_t - \Delta) \nabla^k A|\\
&\quad + C\big[ (|\widetilde \nabla^k\widetilde A| + |\widetilde\nabla^{k+1} \widetilde A|) |v|+ |\widetilde\nabla^k \widetilde A| |\nabla(\Gamma - \widetilde \Gamma)| \\
&\quad + (|\widetilde\nabla^k \widetilde A|+|\widetilde\nabla^{k+1} \widetilde A| )|\Gamma - \widetilde \Gamma| + |\widetilde\nabla^{k+2} \widetilde A| |P\widetilde F_* - F_*| \big] \\
&\le |P(D_t -\widetilde \Delta)\widetilde\nabla^k \widetilde A - (D_t - \Delta) \nabla^k A| + C_{k+3} (|X| + |Y|).
\end{align*}
From (\ref{de of nabla^i A}), one can check that
\begin{align*}
|P(D_t -\widetilde \Delta)&\widetilde\nabla^k \widetilde A - (D_t - \Delta) \nabla^k A|\\
& \le C_{k+2} \left(|P\widetilde A -A|+\sum_{i=0}^{k} |\nabla (P\widetilde\nabla^i \widetilde A - \nabla^i A)| + |\Gamma - \widetilde \Gamma|+ |P\widetilde F_* - F_*| + |v|\right).
\end{align*}
Using the above inequalities with the case $k=0,1$ give (\ref{PDE-ODE ineq 1}) and the theorem is proved.
\end{proof}

\begin{proof}[Proof of theorem \ref{Backward uniqueness theorem}] In \cite{Kotschwar4}, the author proves a general backward uniqueness theorem for two sections $X$, $Y$ in two fixed vector bundles $\mathcal X$, $\mathcal Y$ on $\Sigma$ respectively. We remark that their proof goes through if one assume that $\mathcal X$, $\mathcal Y$ are both time dependent vector bundle with $\partial_t$ replaced by $D^{\mathcal X}_t, D_t^{\mathcal Y}$. In particular, to apply theorem 3 in \cite{Kotschwar4} to our situation, let $$\tau = T-t , \ \Lambda^{ij} = g^{ij}.$$
Note $\nabla \Lambda = 0$ and $b = \partial_\tau g, \nabla b, \partial_\tau \Lambda, R_\Sigma$ are all uniformly bounded, so is $[D_t, \nabla]$ since
$$[ D_t , \nabla] = \partial_t \Gamma + \bar R * H * F_* . $$
Thus theorem \ref{theorem PDE-ODE ineq} and theorem 3 in \cite{Kotschwar4} imply that $X = Y = 0$ on $[1/l, T]$. Thus $F = \widetilde F$ on $[1/l, T]$.
\end{proof}


\bibliographystyle{amsplain}

\end{document}